\newtheorem{thm}{Theorem}[subsection]
\newtheorem{lem}[thm]{Lemma}
\newtheorem{prop}[thm]{Proposition}
\newtheorem{cor}[thm]{Corollary}
\newtheorem{thm2}{Theorem}[section]
\newtheorem{lem2}[thm2]{Lemma}
\newtheorem{cor2}[thm2]{Corollary}
\theoremstyle{definition}
\newtheorem{defn}[thm]{Definition}
\newtheorem{nota}[thm]{Notation}
\newtheorem{rem}[thm]{Remarks}
\newtheorem{cons}[thm]{Construction}
\newtheorem{disc}[thm]{Discussion}
\newtheorem{rem2}[thm2]{Remarks}
\begin{document}

\nocite{*}

\title{On a Notion of Exactness for Reduced Free Products of C$^*$-Algebras}

\author{Paul Skoufranis\thanks{This research was supported in part by NSF grant DMS-090076 and by NSERC PGS.
\newline
1991 Mathematics Subject Classification: Primary: 46L05; Secondary: 46L30 }}

\maketitle

\begin{abstract}
We will study some modifications to the notion of an exact C$^*$-algebra by replacing the minimal tensor product with the reduced free product.  First we will demonstrate how the reduced free product of a short exact sequence of C$^*$-algebras with another C$^*$-algebra may be taken.  It will then be demonstrated that this operation preserves exact sequences.  We will also establish that adjoining arbitrary $k$-tuples of operators in a free way behaves well with respect to taking ultrapowers. 
\end{abstract}

\section{Introduction}

The notion of an exact C$^*$-algebra has played a fundamental role in the theory of C$^*$-algebras and has been well-studied by Kirchberg, Wassermann, and others (see \cite{Ki} and \cite{Wa}).  Exact C$^*$-algebras are generally well-behaved and many of the common and interesting examples of C$^*$-algebras are exact.  In addition, the property that a C$^*$-algebra is exact is preserved under many common operations such as taking subalgebras, taking direct sums, taking minimal tensor products, and taking reduced free products (for example, see \cite{BO} and the references therein).
\par
Over the years many equivalent definitions of an exact C$^*$-algebra have been developed and the most common are listed in the following theorem.  In this theorem (and for the rest of this paper) $\mathcal{B}(\mathcal{H})$ will denote the space of bounded linear maps on a Hilbert space $\mathcal{H}$, $\mathcal{M}_n(\mathbb{C})$ will denote the $n \times n$ matrices with complex entries, $\mathcal{A} \odot \mathcal{B}$ will denote the algebraic tensor product of two algebras $\mathcal{A}$ and $\mathcal{B}$, and $\mathfrak{A} \otimes_{\min} \mathfrak{B}$ will denote the minimal tensor product of two C$^*$-algebras $\mathfrak{A}$ and $\mathfrak{B}$.
\begin{thm2}[Due to Kirchberg, Wassermann, and others; see \cite{BO} for the proof of the first four equivalences]
\label{exactnessTheorem}
Let $\mathfrak{B}$ be a C$^*$-algebra.  Then the following are equivalent:
\begin{enumerate}
  \item There exists a Hilbert space $\mathcal{H}$, a faithful representation $\sigma : \mathfrak{B} \to \mathcal{B}(\mathcal{H})$, and nets $(\varphi_\lambda : \mathfrak{B} \to \mathcal{M}_{n_\lambda}(\mathbb{C}))_\Lambda$ and $(\psi_\lambda : \mathcal{M}_{n_\lambda}(\mathbb{C}) \to \mathcal{B}(\mathcal{H}))_\Lambda$ of contractive, completely positive maps such that
  \[
  \lim_\Lambda \left\|\sigma(B) - \psi_\lambda(\varphi_\lambda(B))\right\| = 0
  \]
  for all $B \in \mathfrak{B}$.
  \item For every Hilbert space $\mathcal{H}$ and faithful representation $\sigma : \mathfrak{B} \to \mathcal{B}(\mathcal{H})$ there exists nets $(\varphi_\lambda : \mathfrak{B} \to \mathcal{M}_{n_\lambda}(\mathbb{C}))_\Lambda$ and $(\psi_\lambda : \mathcal{M}_{n_\lambda}(\mathbb{C}) \to \mathcal{B}(\mathcal{H}))_\Lambda$ of contractive, completely positive maps such that
  \[
  \lim_\Lambda \left\|\sigma(B) - \psi_\lambda(\varphi_\lambda(B))\right\| = 0
  \]
  for all $B \in \mathfrak{B}$.
	\item For every exact sequence of C$^*$-algebras $0\rightarrow \mathfrak{J}\stackrel{i}{\rightarrow}\mathfrak{A} \stackrel{q}{\rightarrow}(\mathfrak{A}/\mathfrak{J})\rightarrow0$ the sequence
\[
0\rightarrow \mathfrak{J}\otimes_{\min} \mathfrak{B}\stackrel{i \otimes Id_\mathfrak{B}}{\rightarrow}\mathfrak{A} \otimes_{\min} \mathfrak{B} \stackrel{q\otimes Id_\mathfrak{B}}{\rightarrow}(\mathfrak{A}/\mathfrak{J})\otimes_{\min} \mathfrak{B}\rightarrow0
\]
is exact.
\item For any sequence $(\mathfrak{A}_n)_{n\geq1}$ of unital C$^*$-algebras the $^*$-homomorphism
\[
\left( \frac{\prod_{n\geq 1} \mathfrak{A}_n}{\bigoplus_{n\geq1} \mathfrak{A}_n}\right) \odot \mathfrak{B} \to \frac{\left(\prod_{n\geq 1} \mathfrak{A}_n\right) \otimes_{\min} \mathfrak{B} }{\left(\bigoplus_{n\geq1}  \mathfrak{A}_n\right)\otimes_{\min}\mathfrak{B}}
\]
defined by
\[
\left((A_n)_{n\geq 1} + \bigoplus_{n\geq1} \mathfrak{A}_n \right) \otimes B \mapsto (A_n )_{n\geq 1} \otimes B + \left(\bigoplus_{n\geq1}  \mathfrak{A}_n\right)\otimes_{\min}\mathfrak{B}
\]
is continuous with respect to the minimal tensor norm on $\left(\frac{\prod_{n\geq 1}\mathfrak{A}_n}{\bigoplus_{n\geq1}\mathfrak{A}_n}\right) \odot \mathfrak{B}$.
\item If $\mathfrak{A}_n$ and $\mathfrak{A}$ are unital C$^*$-algebras, $k \in \mathbb{N}$, $A_1, \ldots, A_k \in \mathfrak{A}$, and $\{A_{i,n}\}^k_{i=1} \subseteq \mathfrak{A}_n$ are such that $\left\|p(A_1, \ldots, A_k)\right\|_\mathfrak{A} = \limsup_{n\to\infty} \left\|p(A_{1,n}, \ldots, A_{k,n})\right\|_{\mathfrak{A}_n}$ for every polynomials $p$ in $k$ non-commutating variables and their complex conjugates, then for all $B_1, \ldots, B_k \in \mathfrak{B}$
\[
\left\|\sum^k_{i=1} A_i \otimes B_i\right\|_{\mathfrak{A} \otimes_{\min}\mathfrak{B}} = \limsup_{n\to\infty} \left\|\sum^k_{i=1} A_{i,n} \otimes B_i\right\|_{\mathfrak{A}_n \otimes_{\min} \mathfrak{B}}.
\]
\end{enumerate}
If any of the above conditions hold then $\mathfrak{B}$ is said to be an exact C$^*$-algebra.
\end{thm2}
\par
As the proof that the fifth statement of Theorem \ref{exactnessTheorem} is equivalent to the others is not standard, we present the proof here.
\begin{proof}[Proof that the fifth statement of Theorem \ref{exactnessTheorem} is equivalent to the fourth statement.]
Let $(\mathfrak{A}_n)_{n\geq1}$ be a sequence of unital C$^*$-algebras and let
\[
T = \sum^k_{i=1} A_i \otimes B_i \in \left( \frac{\prod_{n\geq 1} \mathfrak{A}_n}{\bigoplus_{n\geq 1} \mathfrak{A}_n}\right) \odot \mathfrak{B}
\]
be arbitrary.  For all $i \in \{1,\ldots, k\}$ there exists $A_{i,n} \in \mathfrak{A}_n$ such that 
\[
\left\|p(A_1, \ldots, A_k)\right\|_\mathfrak{A} = \limsup_{n\to\infty} \left\|p(A_{1,n}, \ldots, A_{k,n})\right\|_{\mathfrak{A}_n}
\]
for every polynomials $p$ in $k$ non-commutating variables and their complex conjugates (that is, choose a lifting of each $A_i$).  If $\mathfrak{B}$ satisfies the fifth statement of Theorem \ref{exactnessTheorem} then
\begin{eqnarray} 
\left\|T\right\|_{\left( \frac{\prod_{n\geq 1} \mathfrak{A}_n}{\bigoplus_{n\geq 1} \mathfrak{A}_n }\right) \otimes_{\min} \mathfrak{B}}\!\!\!\!&=&\!\!\!\! \limsup_{n\to\infty} \left\|\sum^k_{i=1} A_{i,n} \otimes B_i\right\|_{\mathfrak{A}_n \otimes_{\min} \mathfrak{B}} \nonumber\\
&=&\!\!\!\! \left\| \left(\sum^k_{i=1} A_{i,n}\otimes B_i\right)_{n\geq 1} + \bigoplus_{n\geq 1} \left(\mathfrak{A}_n \otimes_{\min} \mathfrak{B}\right)\right\|_{\frac{\prod_{n\geq 1} (\mathfrak{A}_n \otimes_{\min} \mathfrak{B})}{\bigoplus_{n\geq 1} (\mathfrak{A}_n \otimes_{\min} \mathfrak{B})}}  \nonumber\\
&=&\!\!\!\! \left\| \sum^k_{i=1}\left( A_{i,n}\right)_{n\geq 1} \otimes B_i + \left(\bigoplus_{n\geq 1} \mathfrak{A}_n\right) \otimes_{\min} \mathfrak{B}\right\|_{\frac{\left(\prod_{n\geq 1}\mathfrak{A}_n\right) \otimes_{\min} \mathfrak{B}}{\left(\bigoplus_{n\geq 1} \mathfrak{A}_n \right)\otimes_{\min} \mathfrak{B}}}  \nonumber
\end{eqnarray}
where the last equality follows from Lemma \ref{tensorImbed}.  Thus the fifth statement of Theorem \ref{exactnessTheorem} implies the fourth statement.
\par
For the other direction, suppose $\mathfrak{B}$ satisfies the fourth statement in Theorem \ref{exactnessTheorem}.  Let $\mathfrak{A}_n$ and $\mathfrak{A}$ be unital C$^*$-algebras, let $k \in \mathbb{N}$, let $A_1, \ldots, A_k \in \mathfrak{A}$, and let $\{A_{i,n}\}^k_{i=1} \subseteq \mathfrak{A}_n$ be such that 
\[
\left\|p(A_1, \ldots, A_k)\right\|_\mathfrak{A} = \limsup_{n\to\infty}\left\|p(A_{1,n}, \ldots, A_{k,n})\right\|_{\mathfrak{A}_n}
\]
for every non-commutative polynomials $p$ in $k$-variables and their complex conjugates.  We may assume that $\mathfrak{A} =$ $^*$-$\overline{Alg(A_1, \ldots, A_k)}$ by properties of the minimal tensor product.
\par
 Fix $B_1, \ldots, B_k \in \mathfrak{B}$.  The fourth equivalence in Theorem \ref{exactnessTheorem} implies that the canonical inclusion
\[
\left(\frac{\prod_{n\geq 1} \mathfrak{A}_n}{\bigoplus_{n\geq 1} \mathfrak{A}_n}\right) \odot \mathfrak{B} \to \frac{\left(\prod_{n\geq 1} \mathfrak{A}_n\right) \otimes_{\min} \mathfrak{B}}{\left(\bigoplus_{n\geq 1} \mathfrak{A}_n\right) \otimes_{\min} \mathfrak{B}}
\]
is continuous with respect to the minimal tensor product and extends to an injective inclusion on the minimal tensor product.  By the assumptions on $\mathfrak{A}$, $\mathfrak{A} \subseteq (\prod_{n\geq 1} \mathfrak{A}_n)/(\bigoplus_{n\geq 1} \mathfrak{A}_n)$ via the identification of $A_i$ with $(A_{i,n})_{n\geq 1} + \bigoplus_{n\geq 1} \mathfrak{A}_n$.  Thus
\begin{eqnarray} 
\left\|\sum^k_{i=1} A_i \otimes B_i\right\|_{\mathfrak{A} \otimes_{\min}\mathfrak{B}}\!\!\!\!&=&\!\!\!\!  \left\|\sum^k_{i=1} \left((A_{i,n})_{n\geq 1} + \left(\bigoplus_{n\geq 1} \mathfrak{A}_n\right)\right) \otimes B_i\right\|_{\left(\frac{\prod_{n\geq 1} \mathfrak{A}_n}{\bigoplus_{n\geq 1} \mathfrak{A}_n}\right) \otimes_{\min}\mathfrak{B}}\nonumber\\
&=&\!\!\!\! \left\|\sum^k_{i=1} (A_{i,n})_{n\geq 1}\otimes B_i + \left(\bigoplus_{n\geq 1} \mathfrak{A}_n\right) \otimes_{\min} \mathfrak{B}  \right\|_{\frac{\left(\prod_{n\geq 1} \mathfrak{A}_n\right) \otimes_{\min} \mathfrak{B}}{\left(\bigoplus_{n\geq 1} \mathfrak{A}_n\right) \otimes_{\min} \mathfrak{B}}} \nonumber\\
&=&\!\!\!\! \left\|\sum^k_{i=1} (A_{i,n})_{n\geq 1} \otimes B_i + \bigoplus_{n\geq 1} \left(\mathfrak{A}_n \otimes_{\min} \mathfrak{B} \right) \right\|_{\frac{\prod_{n\geq 1} \left( \mathfrak{A}_n\otimes_{\min} \mathfrak{B}\right) }{\bigoplus_{n\geq 1} \left(\mathfrak{A}_n \otimes_{\min} \mathfrak{B}\right)}} \nonumber
\end{eqnarray}
(where the last equality follows from Lemma \ref{tensorImbed}) so 
\[
\left\|\sum^k_{i=1} A_i \otimes B_i\right\|_{\mathfrak{A} \otimes_{\min}\mathfrak{B}} = \limsup_{n\to\infty} \left\|\sum^k_{i=1} A_{i,n} \otimes B_i\right\|_{\mathfrak{A}_n \otimes_{\min} \mathfrak{B}}
\]
as desired.  
\end{proof}
\begin{lem2}
\label{tensorImbed}
For any C$^*$-algebra $\mathfrak{B}$ and any sequence of unital C$^*$-algebras $(\mathfrak{A}_n)_{n\geq1}$ there exists an injective $^*$-homomorphism
\[
\Phi : \frac{\left(\prod_{n\geq 1} \mathfrak{A}_n\right) \otimes_{\min}\mathfrak{B}}{\left(\bigoplus_{n\geq 1} \mathfrak{A}_n\right) \otimes_{\min}\mathfrak{B}} \to \frac{\prod_{n\geq 1} (\mathfrak{A}_n\otimes_{\min} \mathfrak{B})}{\bigoplus_{n\geq 1} (\mathfrak{A}_n\otimes_{\min} \mathfrak{B})}
\]
defined by 
\[
\Phi\left( (A_n)_{n\geq 1} \otimes B + \left(\bigoplus_{n\geq 1} \mathfrak{A}_n\right) \otimes_{\min}\mathfrak{B} \right) = (A_n \otimes B)_{n\geq 1} + \bigoplus_{n\geq 1} (\mathfrak{A}_n\otimes_{\min} \mathfrak{B})
\]
for all $(A_n)_{n\geq1} \in \prod_{n\geq1}\mathfrak{A}_n$ and $B \in \mathfrak{B}$.
\end{lem2}
\begin{proof}
Consider the map $\pi_0 : \left(\prod_{n\geq1} \mathfrak{A}_n \right) \odot \mathfrak{B} \to \prod_{n\geq 1} (\mathfrak{A}_n \otimes_{\min} \mathfrak{B})$ defined by
\[
\pi_0((A_n)_{n\geq 1} \otimes B) = (A_n \otimes B)_{n\geq 1}.
\]
It is easy to verify that $\pi_0$ is well-defined, continuous, and isometric with respect to the minimal tensor products and thus induces a injective $^*$-homomorphism 
\[
\pi : \left(\prod_{n\geq1} \mathfrak{A}_n\right) \otimes_{\min} \mathfrak{B} \to \prod_{n\geq 1} (\mathfrak{A}_n \otimes_{\min} \mathfrak{B}).
\]
Notice 
\[
\pi\left( \left(\bigoplus_{n\geq1} \mathfrak{A}_n\right) \otimes_{\min} \mathfrak{B} \right) \subseteq \bigoplus_{n\geq 1} (\mathfrak{A}_n \otimes_{\min} \mathfrak{B})
\]
(as this is clearly true of elementary tensors and thus the closure of the span of elementary tensors).  Therefore the $^*$-homomorphism 
\[
\Phi : \frac{\left(\prod_{n\geq 1} \mathfrak{A}_n\right) \otimes_{\min}\mathfrak{B}}{\left(\bigoplus_{n\geq 1} \mathfrak{A}_n\right) \otimes_{\min}\mathfrak{B}} \to \frac{\prod_{n\geq 1} (\mathfrak{A}_n\otimes_{\min} \mathfrak{B})}{\bigoplus_{n\geq 1} (\mathfrak{A}_n\otimes_{\min} \mathfrak{B})}
\]
as described in the statement of the lemma exists.
\par
To see $\Phi$ is injective, suppose $T \in \left(\prod_{n\geq 1} \mathfrak{A}_n \right) \otimes_{\min}\mathfrak{B}$ and $\pi(T) \in \bigoplus_{n\geq 1} (\mathfrak{A}_n \otimes_{\min} \mathfrak{B})$.  Let $(B_\lambda)_\Lambda$ be a C$^*$-bounded approximate identity for $\mathfrak{B}$.  For each $n \in \mathbb{N}$ and $\lambda \in \Lambda$ let
\[
E_{n, \lambda} := (I_{\mathfrak{A}_1}, I_{\mathfrak{A}_2}, \cdots, I_{\mathfrak{A}_n}, 0, 0, \cdots) \otimes B_\lambda \in \left(\bigoplus_{n\geq 1} \mathfrak{A}_n\right) \otimes_{\min}\mathfrak{B}.
\]
Define a partial ordering on $\mathbb{N} \times \Lambda$ by $(n, \lambda) \leq (m, \lambda')$ if and only if $n \leq m$ and $\lambda \leq \lambda'$.  It is easy to verify that $(E_{n,\lambda})_{\mathbb{N} \times \Lambda}$ is a C$^*$-bounded approximate identity for $\left(\bigoplus_{n\geq 1} \mathfrak{A}_n\right) \otimes_{\min}\mathfrak{B}$ and $(\pi(E_{n,\lambda}))_{\mathbb{N} \times \Lambda}$ is a C$^*$-bounded approximate identity for $\bigoplus_{n\geq 1} (\mathfrak{A}_n \otimes_{\min}\mathfrak{B})$.  Whence 
\[
\lim_{\mathbb{N} \times \Lambda} \left\|\pi(TE_{n,\lambda} - T)\right\| = \lim_{\mathbb{N} \times \Lambda}\left\|\pi(T)\pi(E_{n,\lambda}) - \pi(T)\right\| = 0.
\]
Since $\pi$ is isometric, $\lim_{\mathbb{N} \times \Lambda}\left\|TE_{n,\lambda} - T\right\| = 0$ so
\[
T = \lim_{\mathbb{N} \times \Lambda} TE_{n,\lambda} \in \left(\bigoplus_{n\geq 1} \mathfrak{A}_n \right) \otimes_{\min}\mathfrak{B}.
\]
Thus $ker(\pi) = \left( \bigoplus_{n\geq1} \mathfrak{A}_n\right) \otimes_{\min} \mathfrak{B}$ so $\Phi$ is injective. 
\end{proof}
In this paper we will analyze how the third and fifth equivalences in Theorem \ref{exactnessTheorem} can be adapted to the context of reduced free products.  In Section 2 we will modify the third equivalence in Theorem \ref{exactnessTheorem} by replacing the minimal tensor product with the reduced free product.  First we will demonstrate a way to take the reduced free product of a short exact sequence of C$^*$-algebras against a fixed C$^*$-algebra. Our main result is that every C$^*$-algebra is `freely exact'; that is, taking the reduced free product of a short exact sequence of C$^*$-algebras against a fixed C$^*$-algebra preserves exactness.  This will be accomplished by embedding these short sequences into a short exact sequence involving Toeplitz-Pimsner algebras and restricting back to our original sequence.
\par
In Section 3 of this paper we will analyze how the fifth equivalence of Theorem \ref{exactnessTheorem} can be adapted to the context of reduced free products.  It will be demonstrated that the conclusion of fifth equivalence of Theorem \ref{exactnessTheorem} holds when the minimal tensor product is replaced with the reduced free product for any C$^*$-algebra. This will be accomplished by first proving the result for the C$^*$-algebra generated by a finite number of free creation operators (previously proven in the appendix of \cite{Ma} due to Shlyakhtenko), then for exact C$^*$-algebras, and finally for arbitrary C$^*$-algebras.
\par
In Section 4 we will show if the nuclear embeddings in the second equivalence of Theorem \ref{exactnessTheorem} are required to be state-preserving, then nothing new is gained.  This will be accomplished by showing that if a unital, completely positive maps on a C$^*$-subalgebra $\mathfrak{A}$ of $\mathfrak{B}$ preserves a state then it can be extended in a state-preserving way to a unital, completely positive map on $\mathfrak{B}$ and by using arguments similar to those found in \cite{Oz}.

\section{Short Sequences of Reduced Free Products}
\label{sect:SSoRFP}

\subsection{Notation and a Construction}

The purpose of this section is to replace the tensor products with reduced free products in the third equivalence in Theorem \ref{exactnessTheorem} and examine the result. We begin by describing a reduced free product analog of taking the tensor product of an exact sequence with a fixed C$^*$-algebra.  Most typical results for the reduced free product of C$^*$-algebras requires the states used in the construction to have faithful GNS representations and thus hinders the consideration of quotient maps.  The solutions is to go straight to the construction of the reduced free product of two C$^*$-algebras.
\begin{nota}
\label{freeprod}
For $i\in\{1,2\}$ let $\mathfrak{A}_i$ be unital C$^*$-algebras, let $\pi_i : \mathfrak{A}_i \to \mathcal{B}(\mathcal{H}_i)$ be faithful, unital representations, and let $\xi_i \in \mathcal{H}_i$ be unit vectors.  We define the free product $(\mathcal{H}_1, \xi_1) \ast (\mathcal{H}_2, \xi_2)$ of the Hilbert spaces $(\mathcal{H}_1, \xi_1)$ and $(\mathcal{H}_2, \xi_2)$ in the standard way:  if $\mathcal{H}^0_i = \mathcal{H}_i \ominus \mathbb{C} \xi_i$ then
\[
(\mathcal{H}_1, \xi_1) \ast (\mathcal{H}_2, \xi_2) := \mathbb{C} \xi_0 \oplus \left( \bigoplus_{n\geq 1} \left( \bigoplus_{\begin{array}{cc}
\{i_k\}^n_{k=1} \subseteq \{1,2\},  \\
 i_k \neq i_{k+1} \mbox{ for }k \in \{1, \ldots, n-1\}
\end{array}
} \mathcal{H}^0_{i_1} \otimes \mathcal{H}^0_{i_2} \otimes \cdots \otimes \mathcal{H}^0_{i_n}  \right)\right).
\]
The vector $\xi_0$ is called the distinguished unit vector (and may be denoted $\xi_1 \ast \xi_2$).  
\par
There is a canonical action of each $\mathfrak{A}_i$ on $(\mathcal{H}_1, \xi_1) \ast (\mathcal{H}_2, \xi_2)$.  To define this action let
\[
\mathcal{H}(i) := \mathbb{C} \xi_0 \oplus \left( \bigoplus_{n\geq 1} \left( \bigoplus_{\begin{array}{cc}
\{i_k\}^n_{k=1} \subseteq \{1,2\}, i_1 \neq i,  \\
 i_k \neq i_{k+1} \mbox{ for }k\in\{1, \ldots, n-1\}
\end{array}} \mathcal{H}^0_{i_1} \otimes \mathcal{H}^0_{i_2} \otimes \cdots \otimes \mathcal{H}^0_{i_n}  \right)\right)
\]
for $i \in \{1,2\}$.  Then there exists a canonical isomorphism $U_i : \mathcal{H}_i \otimes \mathcal{H}(i) \to (\mathcal{H}_1, \xi_1) \ast (\mathcal{H}_2, \xi_2)$ defined by
\[
U_i : \left\{\begin{array}{ll}
\mathbb{C} \xi_i \otimes \mathbb{C} \xi_0 \\
\mathcal{H}_i^0 \otimes \mathbb{C} \xi_0 \\
\mathbb{C} \xi_i \otimes \mathcal{H}^0_{i_1} \otimes \mathcal{H}^0_{i_2} \otimes \cdots \otimes \mathcal{H}^0_{i_n}\\
\mathcal{H}_i^0 \otimes \mathcal{H}^0_{i_1} \otimes \mathcal{H}^0_{i_2} \otimes \cdots \otimes \mathcal{H}^0_{i_n}
\end{array} 
\stackrel{\simeq}{\rightarrow}
\begin{array}{ll}
 \mathbb{C} \xi_0 \\
\mathcal{H}_i^0  \\
\mathcal{H}^0_{i_1} \otimes \mathcal{H}^0_{i_2} \otimes \cdots \otimes \mathcal{H}^0_{i_n} \\
\mathcal{H}_i^0 \otimes \mathcal{H}^0_{i_1} \otimes \mathcal{H}^0_{i_2} \otimes \cdots \otimes \mathcal{H}^0_{i_n}
\end{array} 
\right.
\]
where $U_i$ is the canonical isomorphism in each of the four parts listed.  We define the action of $\mathfrak{A}_i$ on $(\mathcal{H}_1, \xi_1) \ast (\mathcal{H}_2, \xi_2)$ by $A\zeta := U(\pi_i(A) \otimes Id) U^*\zeta$ for all $A \in \mathfrak{A}_i$ and for all $\zeta \in (\mathcal{H}_1, \xi_1) \ast (\mathcal{H}_2, \xi_2)$.  In particular, the action of an element $A \in \mathfrak{A}_i$ on $(\mathcal{H}_1, \xi_1) \ast (\mathcal{H}_2, \xi_2)$ is given by
\[
A(\xi_0) = \langle \pi_i(A)\xi_i, \xi_i\rangle_{\mathcal{H}_i} \xi_0 \oplus  (\pi_i(A)\xi_i - \langle \pi_i(A)\xi_i, \xi_i\rangle_{\mathcal{H}_i} \xi_i) \in \mathbb{C}\xi_0 \oplus \mathcal{H}_i^0,
\]
for all $\zeta_1 \otimes \zeta_2 \otimes \cdots \otimes \zeta_n \in \mathcal{H}^0_{i_1} \otimes \mathcal{H}^0_{i_2} \otimes \cdots \otimes \mathcal{H}^0_{i_n}$ where $i_1 = i$
\[
A(\zeta_1 \otimes \zeta_2 \otimes \cdots \otimes \zeta_n) = (\langle \pi_i(A)\zeta_1, \xi_i\rangle_{\mathcal{H}_i}\zeta_2 \otimes \cdots \otimes \zeta_n) \oplus ((\pi_i(A)\zeta_1 - \langle \pi_i(A)\zeta_1, \xi_i\rangle_{\mathcal{H}_i} \xi_i) \otimes \zeta_2 \otimes \cdots \otimes \zeta_n)
\]
which is an element of $(\mathcal{H}^0_{i_2} \otimes \cdots \otimes \mathcal{H}^0_{i_n}) \oplus (\mathcal{H}^0_{i_1} \otimes \mathcal{H}^0_{i_2} \otimes \cdots \otimes \mathcal{H}^0_{i_n})$, and for all $\zeta_1 \otimes \zeta_2 \otimes \cdots \otimes \zeta_n \in \mathcal{H}^0_{i_1} \otimes \mathcal{H}^0_{i_2} \otimes \cdots \otimes \mathcal{H}^0_{i_n}$ where $i_1 \neq i$
\[
A(\zeta_1 \otimes \zeta_2 \otimes \cdots \otimes \zeta_n) = (\langle \pi_i(A)\xi_i, \xi_i\rangle_{\mathcal{H}_i}\zeta_1 \otimes \cdots \otimes \zeta_n) \oplus ((\pi_i(A)\xi_i - \langle \pi_i(A)\xi_i, \xi_i\rangle_{\mathcal{H}_i} \xi_i) \otimes \zeta_1 \otimes \cdots \otimes \zeta_n)
\]
which is an element of $(\mathcal{H}^0_{i_1} \otimes \cdots \otimes \mathcal{H}^0_{i_n}) \oplus (\mathcal{H}^0_{i} \otimes \mathcal{H}^0_{i_1} \otimes \cdots \otimes \mathcal{H}^0_{i_n})$.  We denote the C$^*$-subalgebra of $\mathcal{B}((\mathcal{H}_1, \xi_1) \ast(\mathcal{H}_2, \xi_2))$ generated by $\mathfrak{A}_1$ and $\mathfrak{A}_2$ by $(\mathfrak{A}_1, \pi_1, \xi_1) \ast (\mathfrak{A}_2, \pi_2, \xi_2)$.
\end{nota}
As previously mentioned, when dealing with reduced free products of C$^*$-algebras, it is typical that the representations $\pi_i$ are faithful GNS representations with unit cyclic vectors $\xi_i$.  We will not make this restriction.
\begin{cons}
\label{primecons}  Let $\mathfrak{A}_1$ and $\mathfrak{A}_2$ be unital C$^*$-algebras, let $\mathfrak{J}$ be an ideal of $\mathfrak{A}_1$, let $\pi_{1,0} : \mathfrak{A}_1/\mathfrak{J} \to \mathcal{B}(\mathcal{H}_{1,0})$, $\pi_{1,1} : \mathfrak{A}_1 \to \mathcal{B}(\mathcal{H}_{1,1})$, and $\pi_2 : \mathfrak{A}_2 \to \mathcal{B}(\mathcal{H}_2)$ be unital representations such that $\pi_{1,0}$ and $\pi_2$ are faithful and, if $\mathcal{H}_1 := \mathcal{H}_{1,0} \oplus \mathcal{H}_{1,1}$ and $q : \mathfrak{A}_1 \to \mathfrak{A}_1/\mathfrak{J}$ is the canonical quotient map, $\pi_1 := (\pi_{1,0} \circ q) \oplus \pi_{1,1} : \mathfrak{A}_1 \to \mathcal{B}(\mathcal{H}_1)$ is faithful, and let $\xi_1 \in \mathcal{H}_{1,0}$ and $\xi_2 \in \mathcal{H}_2$ be unit vectors.  Consider the reduced free products $(\mathfrak{A}_1/\mathfrak{J}, \pi_{1,0}, \xi_1) \ast (\mathfrak{A}_2, \pi_2, \xi_2)$ and $(\mathfrak{A}_1, \pi_1, \xi_1) \ast (\mathfrak{A}_2, \pi_2, \xi_2)$.  Let $\langle \mathfrak{J}\rangle_{\mathfrak{A}_1 \ast \mathfrak{A}_2}$ denote the closed ideal of $(\mathfrak{A}_1, \pi_1, \xi_1) \ast (\mathfrak{A}_2, \pi_2, \xi_2)$ generated by $\mathfrak{J}$.  
\par
Notice $(\mathfrak{A}_1, \pi_1, \xi_1) \ast (\mathfrak{A}_2, \pi_2, \xi_2)$ acts on $(\mathcal{H}_1, \xi_1) \ast (\mathcal{H}_2, \xi_2)$ and $(\mathfrak{A}_1/\mathfrak{J}, \pi_{1,0}, \xi_1) \ast (\mathfrak{A}_2, \pi_2, \xi_2)$ acts on $(\mathcal{H}_{1,0}, \xi_1) \ast (\mathcal{H}_2, \xi_2)$.  By the construction of the free product of Hilbert spaces, $(\mathcal{H}_{1,0}, \xi_1) \ast (\mathcal{H}_2, \xi_2)$ can be viewed canonically as a Hilbert subspace of $(\mathcal{H}_1, \xi_1) \ast (\mathcal{H}_2, \xi_2)$.  Moreover, by considering the action of $(\mathfrak{A}_1, \pi_1, \xi_1) \ast (\mathfrak{A}_2, \pi_2, \xi_2)$ on $(\mathcal{H}_{1,0}, \xi_1) \ast (\mathcal{H}_2, \xi_2) \subseteq (\mathcal{H}_1, \xi_1) \ast (\mathcal{H}_2, \xi_2)$, it is easily seen that $(\mathcal{H}_{1,0}, \xi_1) \ast (\mathcal{H}_2, \xi_2)$ is an invariant subspace of $(\mathfrak{A}_1, \pi_1, \xi_1) \ast (\mathfrak{A}_2, \pi_2, \xi_2)$ and the compression of $(\mathfrak{A}_1, \pi_1, \xi_1) \ast (\mathfrak{A}_2, \pi_2, \xi_2)$ to this subspace is $(\mathfrak{A}_1/\mathfrak{J}, \pi_{1,0}, \xi_1) \ast (\mathfrak{A}_2, \pi_2, \xi_2)$.  Thus there is a well-defined $^*$-homomorphism 
\[
\pi : (\mathfrak{A}_1, \pi_1, \xi_1) \ast (\mathfrak{A}_2, \pi_2, \xi_2) \to (\mathfrak{A}_1/\mathfrak{J}, \pi_{1,0}, \xi_1) \ast (\mathfrak{A}_2, \pi_2, \xi_2)
\]
defined by
\[
\pi(T) := P_{(\mathcal{H}_{1,0}, \xi_1) \ast (\mathcal{H}_2, \xi_2)} T |_{(\mathcal{H}_{1,0}, \xi_1) \ast (\mathcal{H}_2, \xi_2)}
\]
where $P_{(\mathcal{H}_{1,0}, \xi_1) \ast (\mathcal{H}_2, \xi_2)}$ is the orthogonal projection of $(\mathcal{H}_1, \xi_1) \ast (\mathcal{H}_2, \xi_2)$ onto $(\mathcal{H}_{1,0}, \xi_1) \ast (\mathcal{H}_2, \xi_2)$.
\par
If $J \in \mathfrak{J}$ then it is easily seen that $J|_{(\mathcal{H}_{1,0}, \xi_1) \ast (\mathcal{H}_2, \xi_2)} = 0$ as $\pi_{1,0}(q(J)) = \pi_{1,0}(0) = 0$.  Therefore the algebraic ideal generated by $\mathfrak{J}$ in $(\mathfrak{A}_1, \pi_1, \xi_1) \ast (\mathfrak{A}_2, \pi_2, \xi_2)$ is in the kernel of $\pi$ and thus $\langle \mathfrak{J} \rangle_{\mathfrak{A}_1 \ast \mathfrak{A}_2} \subseteq ker(\pi)$. Hence we can consider the sequence of C$^*$-algebras
\[
0 \rightarrow \langle \mathfrak{J} \rangle_{\mathfrak{A}_1 \ast \mathfrak{A}_2} \stackrel{i}{\rightarrow} (\mathfrak{A}_1, \pi_1, \xi_1) \ast (\mathfrak{A}_2, \pi_2, \xi_2) \stackrel{\pi}{\rightarrow}(\mathfrak{A}_1/\mathfrak{J}, \pi_{1,0}, \xi_1) \ast (\mathfrak{A}_2, \pi_2, \xi_2) \rightarrow 0
\]
where $i$ is the inclusion map.  Clearly $i$ is injective, $\pi$ is surjective, and $\langle \mathfrak{J} \rangle_{\mathfrak{A}_1 \ast \mathfrak{A}_2} \subseteq ker(\pi)$.  Hence the sequence is exact if and only if $ker(\pi) \subseteq \langle \mathfrak{J} \rangle_{\mathfrak{A}_1 \ast \mathfrak{A}_2}$; that is there is no element of $(\mathfrak{A}_1, \pi_1, \xi_1) \ast (\mathfrak{A}_2, \pi_2, \xi_2) \setminus \langle \mathfrak{J}\rangle_{\mathfrak{A}_1 \ast \mathfrak{A}_2}$ that is zero on the copy of $(\mathcal{H}_{1,0}, \xi_1) \ast (\mathcal{H}_2, \xi_2)$ inside $(\mathcal{H}_1, \xi_1) \ast (\mathcal{H}_2, \xi_2)$.
\end{cons}
The requirements on $\pi_{1,0}$, $\pi_1$, and $\pi_2$ are necessary to ensure we are considering objects related directly to $\mathfrak{A}_1/\mathfrak{J}$, $\mathfrak{A}_1$, and $\mathfrak{A}_2$.  The conditions on $\pi_{1,0}$, $\pi_1$, $\pi_2$, $\xi_1$, and $\xi_2$ are also designed so the vectors $\xi_1$ and $\xi_2$ give rise to vector states on our C$^*$-algebras.  Moreover $\pi_{1,0}$, $\pi_1$, and $\pi_2$ are assumed to be unital so the C$^*$-algebras under consideration are truly reduced free products of C$^*$-algebras.  Finally the consideration of $(\mathfrak{A}_1, \pi_1, \xi_1) \ast (\mathfrak{A}_2, \pi_2, \xi_2)$ was necessary to ensure the $^*$-homomorphism $\pi$ existed.
\par
Our main goal is to prove the following result.
\begin{thm}
\label{main}
Let $\mathfrak{A}_1$ and $\mathfrak{A}_2$ be unital C$^*$-algebras, let $\mathfrak{J}$ be an ideal of $\mathfrak{A}_1$, let $\pi_{1,0} : \mathfrak{A}_1/\mathfrak{J} \to \mathcal{B}(\mathcal{H}_{1,0})$, $\pi_{1,1} : \mathfrak{A}_1 \to \mathcal{B}(\mathcal{H}_{1,1})$, and $\pi_2 : \mathfrak{A}_2 \to \mathcal{B}(\mathcal{H}_2)$ be unital representations such that $\pi_{1,0}$ and $\pi_2$ are faithful and, if $\mathcal{H}_1 := \mathcal{H}_{1,0} \oplus \mathcal{H}_{1,1}$ and $q : \mathfrak{A}_1 \to \mathfrak{A}_1/\mathfrak{J}$ is the canonical quotient map, $\pi_1 := (\pi_{1,0} \circ q) \oplus \pi_{1,1} : \mathfrak{A}_1 \to \mathcal{B}(\mathcal{H}_1)$ is faithful, and let $\xi_1 \in \mathcal{H}_{1,0}$ and $\xi_2 \in \mathcal{H}_2$ be unit vectors. Under these assumptions, the sequence of C$^*$-algebras
\[
0 \rightarrow \langle \mathfrak{J}\rangle_{\mathfrak{A}_1 \ast \mathfrak{A}_2} \stackrel{i}{\rightarrow} (\mathfrak{A}_1, \pi_1, 
\xi_1) \ast (\mathfrak{A}_2, \pi_2, \xi_2) \stackrel{\pi}{\rightarrow}(\mathfrak{A}_1/\mathfrak{J}, \pi_{1,0}, \xi_1) \ast (\mathfrak{A}_2, \pi_2, \xi_2) \rightarrow 0
\]
is exact.
\end{thm}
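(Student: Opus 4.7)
The plan is to establish the remaining inclusion $\ker(\pi) \subseteq \langle \mathfrak{J}\rangle_{\mathfrak{A}_1 \ast \mathfrak{A}_2}$, since the construction preceding the theorem already verifies that $i$ is injective, that $\pi$ is surjective, and that $\langle \mathfrak{J}\rangle_{\mathfrak{A}_1 \ast \mathfrak{A}_2} \subseteq \ker(\pi)$. Following the strategy indicated in the introduction, I would embed the given sequence into a short exact sequence of Toeplitz--Pimsner type C$^*$-algebras in which the analogous exactness is automatic, and then restrict back.

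First, I would build a Toeplitz--Pimsner algebra $\mathcal{T}$ from a Fock-module model of the free product Hilbert space $(\mathcal{H}_1,\xi_1)\ast(\mathcal{H}_2,\xi_2)$, taken over an appropriate coefficient C$^*$-algebra so that $\mathfrak{A}_1$ and $\mathfrak{A}_2$ act by left-regular representations on the Fock module and generate $\mathcal{T}$. This realizes $(\mathfrak{A}_1,\pi_1,\xi_1)\ast(\mathfrak{A}_2,\pi_2,\xi_2)$ as a distinguished C$^*$-subalgebra of $\mathcal{T}$. Performing the same construction with $\mathfrak{A}_1$ replaced by $\mathfrak{A}_1/\mathfrak{J}$ (and representation $\pi_{1,0}$) produces a Toeplitz--Pimsner algebra $\mathcal{T}'$ that contains $(\mathfrak{A}_1/\mathfrak{J},\pi_{1,0},\xi_1)\ast(\mathfrak{A}_2,\pi_2,\xi_2)$ analogously.

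Next, I would use universal properties of the Toeplitz--Pimsner construction to produce a surjective $^*$-homomorphism $\rho : \mathcal{T} \to \mathcal{T}'$ whose kernel is the ideal $\langle \mathfrak{J}\rangle_{\mathcal{T}}$ of $\mathcal{T}$ generated by the copy of $\mathfrak{J}$, giving an exact sequence $0 \to \langle \mathfrak{J}\rangle_{\mathcal{T}} \to \mathcal{T} \to \mathcal{T}' \to 0$. Restricting $\rho$ to the subalgebra $(\mathfrak{A}_1,\pi_1,\xi_1)\ast(\mathfrak{A}_2,\pi_2,\xi_2) \subseteq \mathcal{T}$ would recover $\pi$ (this must be checked via the explicit action on the distinguished vector $\xi_0$ and then extended by multiplicativity and continuity). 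Once this identification is made, the desired inclusion $\ker(\pi) \subseteq \langle \mathfrak{J}\rangle_{\mathfrak{A}_1 \ast \mathfrak{A}_2}$ reduces to verifying the equality $\langle \mathfrak{J}\rangle_{\mathcal{T}} \cap (\mathfrak{A}_1,\pi_1,\xi_1)\ast(\mathfrak{A}_2,\pi_2,\xi_2) = \langle \mathfrak{J}\rangle_{\mathfrak{A}_1 \ast \mathfrak{A}_2}$.

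I expect the main obstacle to be precisely this last identification. One direction is clear from the definitions, but for the reverse inclusion I anticipate needing a slicing argument using the canonical conditional expectation from $\mathcal{T}$ down onto the reduced free product subalgebra, combined with the gauge-type action on the Fock module that grades $\mathcal{T}$ by word-length in the free product. Averaging an element of $\langle \mathfrak{J}\rangle_{\mathcal{T}}$ that lies in the free product against this gauge action should exhibit it as a norm-limit of reduced-word polynomials each containing at least one factor from $\mathfrak{J}$, and such polynomials lie in the algebraic ideal generated by $\mathfrak{J}$. Closing up then yields the inclusion into $\langle \mathfrak{J}\rangle_{\mathfrak{A}_1 \ast \mathfrak{A}_2}$ and completes the proof.
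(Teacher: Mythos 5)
Your high-level architecture matches the paper's: embed the free-product sequence into a Toeplitz--Pimsner sequence and restrict back. But there is a genuine gap at the step you dismiss as automatic. You assert that the universal property of the Toeplitz--Pimsner construction yields a surjection $\rho:\mathcal{T}\to\mathcal{T}'$ with $\ker(\rho)=\langle\mathfrak{J}\rangle_{\mathcal{T}}$. The universal property only gives the surjection and the easy inclusion $\langle\mathfrak{J}\rangle_{\mathcal{T}}\subseteq\ker(\rho)$; the reverse inclusion \emph{is} the exactness statement, and it is the main technical content of the paper (Theorem \ref{dse}). The standard exactness results for Toeplitz--Pimsner algebras are proved under the hypothesis that the coefficient algebras act via faithful GNS representations of the defining states, and that hypothesis fails here by design: $\mathfrak{A}_1$ acts on $\mathcal{H}_1=\mathcal{H}_{1,0}\oplus\mathcal{H}_{1,1}$ through $(\pi_{1,0}\circ q)\oplus\pi_{1,1}$, which is faithful as a representation but whose associated vector state need not have a faithful GNS representation. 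The paper therefore proves exactness by hand: it introduces the gauge action, the conditional expectation $\mathcal{E}$, a Fej\'er-kernel ``Fourier series'' (Lemma \ref{fourier}), and then in Lemma \ref{complemma} compares the Fourier coefficients of $\ker(\pi')$ and of the ideal level by level on the subspaces $\mathcal{L}_{i,n}$, using a Gram--Schmidt reduction and a matrix-amplification argument identifying $\mathcal{M}_p(\mathfrak{A}_1)/\mathcal{M}_p(\mathfrak{J})$ with $\mathcal{M}_p(\mathfrak{A}_1/\mathfrak{J})$. None of this is subsumed by ``universal properties,'' and your proposal contains no substitute for it.

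A second, smaller gap concerns the ``restrict back'' step. In the paper's model the reduced free product is \emph{not} a C$^*$-subalgebra of $C^*(\mathfrak{A}_1\oplus\mathfrak{A}_2,S)$; one only has a unital completely positive embedding $\Psi$ (from Theorem 4.8.2 of \cite{BO}) together with a $^*$-homomorphic left inverse $\sigma$ given by compression to a distinguished subspace $\mathcal{K}_{1,1}\simeq(\mathcal{H}_1,\xi_1)\ast(\mathcal{H}_2,\xi_2)$. Consequently the final step is not the intersection identity $\langle\mathfrak{J}\rangle_{\mathcal{T}}\cap\bigl((\mathfrak{A}_1,\pi_1,\xi_1)\ast(\mathfrak{A}_2,\pi_2,\xi_2)\bigr)=\langle\mathfrak{J}\rangle_{\mathfrak{A}_1\ast\mathfrak{A}_2}$ that you propose, but the statement that $\sigma$ carries $\langle\mathfrak{J}\rangle_{C^*(\mathfrak{A}_1\oplus\mathfrak{A}_2,S)}$ into $\langle\mathfrak{J}\rangle_{\mathfrak{A}_1\ast\mathfrak{A}_2}$ (Lemma \ref{compideal}), which is verified by explicitly computing the compressions of the spanning words of the form $(A_1S)(B_1S)\cdots J\cdots(S^*A_1')$ and matching them against the dense set of the ideal described in Section 2.2. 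If you insist on an honest subalgebra embedding you must actually construct it and check that $\rho$ restricts to $\pi$; either way, the ideal-comparison step requires a concrete word-by-word computation, not just gauge averaging and ``closing up.''
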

\begin{rem}
\label{2.1.4}
By the above discussion there exists a $^*$-homomorphism
\[
\frac{(\mathfrak{A}_1, \pi_1, \xi_1) \ast (\mathfrak{A}_2, \pi_2, \xi_2)}{\langle \mathfrak{J}\rangle_{\mathfrak{A}_1 \ast \mathfrak{A}_2}} \to (\mathfrak{A}_1/\mathfrak{J}, \pi_{1,0}, \xi_1) \ast (\mathfrak{A}_2, \pi_2, \xi_2)
\]
and the question of whether or not the above sequence is exact is equivalent to this $^*$-homomorphism being injective.  Thus to prove the sequence is exact it would suffice to construct an inverse map.  It is tempting to believe that such an inverse map exists due to the universal property of the reduced free products of C$^*$-algebras (see Theorem 4.7.2 of \cite{BO}).  However, to apply said property, we would need to know the vector states defined by $\xi_1$ and $\xi_2$ on $\mathfrak{A}_1/\mathfrak{J}$ and $\mathfrak{A}_2$ respectively had faithful GNS representations and we would need to know the state on $((\mathfrak{A}_1, \pi_1, \xi_1) \ast (\mathfrak{A}_2, \pi_2, \xi_2)) /\langle \mathfrak{J}\rangle_{\mathfrak{A}_1 \ast \mathfrak{A}_2}$ induced by the vector state on $(\mathfrak{A}_1, \pi_1, \xi_1) \ast (\mathfrak{A}_2, \pi_2, \xi_2)$ from the distinguished vector has a faithful GNS representation.  It is this later condition that appears to provide the greatest obstacle.   Thus, once Theorem \ref{main} has been established, the state on $((\mathfrak{A}_1, \pi_1, \xi_1) \ast (\mathfrak{A}_2, \pi_2, \xi_2)) /\langle \mathfrak{J}\rangle_{\mathfrak{A}_1 \ast \mathfrak{A}_2}$ induced by the vector state on $(\mathfrak{A}_1, \pi_1, \xi_1) \ast (\mathfrak{A}_2, \pi_2, \xi_2)$ from the distinguished vector has a faithful GNS representation whenever the vector states defined by $\xi_1$ and $\xi_2$ on $\mathfrak{A}_1/\mathfrak{J}$ and $\mathfrak{A}_2$ respectively have faithful GNS representations.
\end{rem}
The proof of Theorem \ref{main} will be demonstrated over the next three sections.  In Section 2.2 we will examine the ideal $\langle \mathfrak{J}\rangle_{\mathfrak{A}_1 \ast \mathfrak{A}_2}$ by describing a set of operators with dense span.  In Section 2.3 we will construct short exact sequence of C$^*$-algebras involving Toeplitz-Pimsner algebras.  In Section 2.4 we will embed each sequence under consideration from Theorem \ref{main} into a sequence from Section 2.3 and, with a little work, this will complete the proof of Theorem \ref{main}.

\subsection{Structure of the Ideal}

In our goal to prove Theorem \ref{main} we will begin by analyzing the structure of the ideals $\langle \mathfrak{J}\rangle_{\mathfrak{A}_1 \ast \mathfrak{A}_2}$ under consideration.  A set of operators consisting of products of elements from $\mathfrak{J}$, $\mathfrak{A}_1$, and $\mathfrak{A}_2$ will be shown to be dense in $\langle \mathfrak{J}\rangle_{\mathfrak{A}_1 \ast \mathfrak{A}_2}$ using common arguments pertaining to reduced free products.  We will then analyze the action of each of these operators on $(\mathcal{H}_1,\xi_1)\ast(\mathcal{H}_2,\xi_2)$.
\begin{disc}
\label{2.4.1}
Let $\mathfrak{A}_1$ and $\mathfrak{A}_2$ be unital C$^*$-algebras, let $\mathfrak{J}$ be an ideal of $\mathfrak{A}_1$, let $\pi_{1,0} : \mathfrak{A}_1/\mathfrak{J} \to \mathcal{B}(\mathcal{H}_{1,0})$, $\pi_{1,1} : \mathfrak{A}_1 \to \mathcal{B}(\mathcal{H}_{1,1})$, and $\pi_2 : \mathfrak{A}_2 \to \mathcal{B}(\mathcal{H}_2)$ be unital representations such that $\pi_{1,0}$ and $\pi_2$ are faithful and, if $\mathcal{H}_1 := \mathcal{H}_{1,0} \oplus \mathcal{H}_{1,1}$ and $q : \mathfrak{A}_1 \to \mathfrak{A}_1/\mathfrak{J}$ is the canonical quotient map, $\pi_1 := (\pi_{1,0} \circ q) \oplus \pi_{1,1} : \mathfrak{A}_1 \to \mathcal{B}(\mathcal{H}_1)$ is faithful, and let $\xi_1 \in \mathcal{H}_{1,0}$ and $\xi_2 \in \mathcal{H}_2$ be unit vectors.   To determine the structure of $\langle \mathfrak{J}\rangle_{\mathfrak{A}_1 \ast \mathfrak{A}_2}$ inside $(\mathfrak{A}_1, \pi_1, \xi_1) \ast (\mathfrak{A}_2, \pi_2, \xi_2)$, note that
\[
span\{ A_1 B_1 \cdots A_nB_n J B'_1 A'_1 \cdots B'_m A'_m \, \mid \, n,m\geq 0, A_i, A'_j \in\mathfrak{A}_1, B_i, B'_j \in \mathfrak{A}_2, J \in \mathfrak{J}\}
\]
(where we can alway begin and end with an element of $\mathfrak{A}_1$ as $\mathfrak{A}_1$ is unital and $\pi_1$ is unital) is dense in $\langle \mathfrak{J}\rangle_{\mathfrak{A}_1 \ast \mathfrak{A}_2}$. For $i \in \{1,2\}$ let
\[
\mathfrak{A}_i^0 := \{A \in \mathfrak{A}_i \, \mid \, \langle A\xi_i, \xi_i\rangle_{\mathcal{H}_i} = 0\}
\]
so $\mathfrak{A}_i = \mathbb{C} I_{\mathfrak{A}_i} + \mathfrak{A}_i^0$.  Using the fact that $\mathfrak{J}$ is an ideal of $\mathfrak{A}_1$ and the fact that $I_{\mathfrak{A}_i}$ are the identity elements of $(\mathfrak{A}_1, \pi_1, \xi_1) \ast (\mathfrak{A}_2, \pi_2, \xi_2)$, 
\[
span\left\{\left.
\begin{array}{c}
  A_1B_{1} \cdots A_nB_nJB'_mA'_m\cdots B'_1A'_1, \\
   B_{1}A_2 \cdots A_nB_nJB'_mA'_m\cdots B'_1A'_1, \\
   A_1B_{1} \cdots A_nB_nJB'_mA'_m\cdots A'_2B'_1, \\
    B_{1}A_2 \cdots A_nB_nJB'_mA'_m\cdots A'_2B'_1
\end{array}
 \, \right| \, n,m\geq 0, A_i, A'_j \in\mathfrak{A}_1^0, B_i, B'_j \in \mathfrak{A}_2^0, J \in \mathfrak{J}\right\}
\]
is dense in $\langle \mathfrak{J}\rangle_{\mathfrak{A}_1 \ast \mathfrak{A}_2}$.  Notice $\mathfrak{J} \subseteq \mathfrak{A}^0_1$.
\end{disc}
\begin{disc}
\label{idealdis1}
To begin the analysis of $\langle \mathfrak{J}\rangle_{\mathfrak{A}_1 \ast \mathfrak{A}_2}$, for $i \in \{1,2\}$ let $\mathcal{H}_i^0 := \mathcal{H}_i \ominus \mathbb{C}\xi_i$ and recall that $(\mathfrak{A}_1, \pi_1, \xi_1) \ast (\mathfrak{A}_2, \pi_2, \xi_2)$ acts on 
\[
(\mathcal{H}_1, \xi_1) \ast (\mathcal{H}_2,\xi_2) = \mathbb{C} \xi_0 \oplus \left( \bigoplus_{n\geq 1} \left( \bigoplus_{\begin{array}{cc}
\{i_k\}^n_{k=1} \subseteq \{1,2\},  \\
 i_k \neq i_{k+1} \mbox{ for }k \in \{1, \ldots, n-1\}
\end{array}
} \mathcal{H}^0_{i_1} \otimes \mathcal{H}^0_{i_2} \otimes \cdots \otimes \mathcal{H}^0_{i_n}  \right)\right)
\]
as described in Notation \ref{freeprod}.  
\par
Fix $n,m\geq 0, \{A_i\}^n_{i=1}, \{A'_j\}^m_{j=1} \subseteq \mathfrak{A}_1^0$, $\{B_i\}^n_{i=1}, \{B'_j\}^m_{j=1} \subseteq \mathfrak{A}_2^0$, and $J \in \mathfrak{J}$.  Let 
\[
T = A_1B_{1} \cdots A_nB_nJB'_mA'_m\cdots B'_1A'_1 \,\,\,\,\,\,\,\,\,\,\mbox{ and }\,\,\,\,\,\,\,\,\,\, R = B_{1}A_2 \cdots A_nB_nJB'_mA'_m\cdots B'_1A'_1.
\]
We desire to describe the actions of $T$ and $R$ on $(\mathcal{H}_1, \xi_1) \ast (\mathcal{H}_2,\xi_2)$.  First we claim that $T$ and $R$ are zero on 
\[
\mathbb{C} \xi_0 \oplus \left(\bigoplus_{k\geq 1}(\mathcal{H}_2^0 \otimes \mathcal{H}_1^0)^{\otimes k}\right) \oplus \left( \bigoplus_{k\geq 0} (\mathcal{H}_2^0 \otimes \mathcal{H}_1^0)^{\otimes k} \otimes \mathcal{H}_2^0\right) \subseteq (\mathcal{H}_1, \xi_1) \ast (\mathcal{H}_2,\xi_2).
\]
To see this, notice for all $k\geq 1$ that
\[
A'_1((\mathcal{H}_2^0 \otimes \mathcal{H}_1^0)^{\otimes k}) \subseteq \mathcal{H}_1^0 \otimes (\mathcal{H}_2^0 \otimes \mathcal{H}_1^0)^{\otimes k}
\]
(as $A'_1 \xi_1 \in \mathcal{H}^0_1$ by the assumption that $A'_1 \in \mathfrak{A}_1^0$),
\[
B'_1 (\mathcal{H}_1^0 \otimes (\mathcal{H}_2^0 \otimes \mathcal{H}_1^0)^{\otimes k}) \subseteq \mathcal{H}_2^0 \otimes \mathcal{H}_1^0 \otimes (\mathcal{H}_2^0 \otimes \mathcal{H}_1^0)^{\otimes k}
\]
(as $B'_1 \in \mathfrak{A}_2^0$), and thus, by continuing the pattern, $J$ will act on $(\mathcal{H}_2^0 \otimes \mathcal{H}_1^0)^{\otimes m+k}$.  However, as $J \xi_1 = 0$, $J$ acts as the zero operator on $(\mathcal{H}_2^0 \otimes \mathcal{H}_1^0)^{\otimes m+k}$ and thus $T$ and $R$ are zero on $(\mathcal{H}_2^0 \otimes \mathcal{H}_1^0)^{\otimes k}$.  The arguments for the other terms in the direct sum are similar.
\par
Thus it remains to describe the actions of $T$ and $R$ on
\[
\left(\bigoplus_{k\geq 1}(\mathcal{H}_1^0 \otimes \mathcal{H}_2^0)^{\otimes k}\right) \oplus \left( \bigoplus_{k\geq 0} (\mathcal{H}_1^0 \otimes \mathcal{H}_2^0)^{\otimes k} \otimes \mathcal{H}_1^0\right)   \subseteq (\mathcal{H}_1, \xi_1) \ast (\mathcal{H}_2,\xi_2).
\]
We claim that $T$ and $R$ are non-zero only on the direct summand
\[
\left(\bigoplus_{k\geq m+1}(\mathcal{H}_1^0 \otimes \mathcal{H}_2^0)^{\otimes k}\right) \oplus \left( \bigoplus_{k\geq m} (\mathcal{H}_1^0 \otimes \mathcal{H}_2^0)^{\otimes k} \otimes \mathcal{H}_1^0\right)   \subseteq (\mathcal{H}_1, \xi_1) \ast (\mathcal{H}_2,\xi_2),
\]
if $k \geq m$ and
\[
\eta_1 \otimes \zeta_1 \otimes \cdots \otimes \eta_m \otimes \zeta_m \otimes \eta_{m+1} \otimes \zeta_{m+1} \otimes \cdots \otimes \zeta_k \otimes \eta_{k+1} \in (\mathcal{H}_1^0 \otimes \mathcal{H}_2^0)^{\otimes k} \otimes \mathcal{H}_1^0
\]
then
\[
S(\eta_1 \otimes \zeta_1 \otimes \cdots \otimes \eta_m \otimes \zeta_m \otimes \eta_{m+1} \otimes \zeta_{m+1} \otimes \cdots \otimes \zeta_k \otimes \eta_{k+1}) = S(\eta_1 \otimes \zeta_1 \otimes \cdots \otimes \eta_m \otimes \zeta_m \otimes \eta_{m+1}) \otimes \zeta_{m+1} \otimes \cdots \otimes \zeta_k \otimes \eta_{k+1}
\]
for $S = T$ and $S = R$, and if $k \geq m+1$ and
\[
\eta_1 \otimes \zeta_1 \otimes \cdots \otimes \eta_m \otimes \zeta_m \otimes \eta_{m+1} \otimes \zeta_{m+1} \otimes \cdots \otimes \eta_{k} \otimes \zeta_{k} \in (\mathcal{H}_1^0 \otimes \mathcal{H}_2^0)^{\otimes k}
\]
then
\[
S(\eta_1 \otimes \zeta_1 \otimes \cdots \otimes \eta_m \otimes \zeta_m \otimes \eta_{m+1} \otimes \zeta_{m+1} \otimes \cdots \otimes \eta_{k+1} \otimes \zeta_{k+1}) = S(\eta_1 \otimes \zeta_1 \otimes \cdots \otimes \eta_m \otimes \zeta_m \otimes \eta_{m+1}) \otimes \zeta_{m+1} \otimes \cdots \otimes \eta_k \otimes \zeta_{k}
\]
for $S = T$ and $S = R$.  To prove this result we will proceed by induction on $m$.  For $m = 0$ notice if $k\geq 1$ then for all
\[
\eta_1 \otimes \zeta_1 \otimes \cdots \otimes \eta_k \otimes \zeta_k \in (\mathcal{H}_1^0 \otimes \mathcal{H}_2^0)^{\otimes k}
\]
we have
\[
J(\eta_1 \otimes \zeta_1 \otimes \cdots \otimes \eta_k \otimes \zeta_k) = (J\eta_1) \otimes \zeta_1 \otimes \cdots \otimes \eta_k \otimes \zeta_k \in (\mathcal{H}_1^0 \otimes \mathcal{H}_2^0)^{\otimes k}
\]
as $J(\mathcal{H}_1^0) \subseteq \mathcal{H}_1^0$.  As $B_n \in \mathfrak{A}_2^0$
\[
B_n((J\eta_1) \otimes \zeta_1 \otimes \cdots \otimes \eta_k \otimes \zeta_k) = B_n\xi_2 \otimes (J\eta_1) \otimes \zeta_1 \otimes \cdots \otimes \eta_k \otimes \zeta_k \in \mathcal{H}_2^0 \otimes(\mathcal{H}_1^0 \otimes \mathcal{H}_2^0)^{\otimes k}.
\]
Similarly $A_n \in \mathfrak{A}_1^0$ so 
\[
A_n(B_n\xi_2 \otimes (J\eta_1) \otimes \zeta_1 \otimes \cdots \otimes \eta_k \otimes \zeta_k) = A_n \xi_1 \otimes B_n\xi_2 \otimes (J\eta_1) \otimes \zeta_1 \otimes \cdots \otimes \eta_k \otimes \zeta_k \in \mathcal{H}_1^0 \otimes \mathcal{H}_2^0 \otimes(\mathcal{H}_1^0 \otimes \mathcal{H}_2^0)^{\otimes k}.
\]
By repetition
\[
T(\eta_1 \otimes \zeta_1 \otimes \cdots \otimes \eta_k \otimes \zeta_k) = A_1\xi_1 \otimes B_1\xi_2 \otimes \cdots \otimes A_n \xi_1 \otimes B_n\xi_2 \otimes (J\eta_1) \otimes \zeta_1 \otimes \cdots \otimes \eta_k \otimes \zeta_k
\]
and
\[
R(\eta_1 \otimes \zeta_1 \otimes \cdots \otimes \eta_k \otimes \zeta_k) =  B_1\xi_2  \otimes A_2\xi_1 \otimes \cdots \otimes A_n \xi_1 \otimes B_n\xi_2 \otimes (J\eta_1) \otimes \zeta_1 \otimes \cdots \otimes \eta_k \otimes \zeta_k.
\]
Similar arguments show for all $k\geq 0$ and
\[
\eta_1 \otimes \zeta_1 \otimes \cdots \otimes \zeta_k \otimes \eta_{k+1} \in (\mathcal{H}_1^0 \otimes \mathcal{H}_2^0)^{\otimes k} \otimes \mathcal{H}_1^0
\]
that
\[
T(\eta_1 \otimes \zeta_1 \otimes \cdots \otimes \zeta_k \otimes \eta_{k+1}) = A_1\xi_1 \otimes B_1\xi_2 \otimes \cdots \otimes A_n \xi_1 \otimes B_n\xi_2 \otimes (J\eta_1) \otimes \zeta_1 \otimes \cdots \otimes \zeta_k \otimes \eta_{k+1}
\]
and
\[
R(\eta_1 \otimes \zeta_1 \otimes \cdots \otimes \zeta_k \otimes \eta_{k+1}) =  B_1\xi_2 \otimes A_2\xi_1\otimes \cdots \otimes A_n \xi_1 \otimes B_n\xi_2 \otimes (J\eta_1) \otimes \zeta_1 \otimes \cdots \otimes \zeta_k \otimes \eta_{k+1}.
\]
Hence the base case is complete.
\par
Suppose $m \geq 1$ and the result is true for $m-1$.  Consider the action of $B'_1 A'_1$.  If $\eta_1 \in \mathcal{H}^0_1$ then
\begin{eqnarray} 
B'_1A'_1(\eta_1)\!\!\!\!&=&\!\!\!\!B'_1(\langle A'_1\eta_1, \xi_1\rangle_{\mathcal{H}_1} \xi_1 + (A'_1\eta_1 - \langle A'_1\eta_1, \xi_1\rangle_{\mathcal{H}_1} \xi_1)  ) \nonumber\\
&=&\!\!\!\!  \langle A'_1\eta_1, \xi_1\rangle_{\mathcal{H}_1} B'_1 \xi_2 +  B'_1\xi_2 \otimes (A'_1\eta_1 - \langle A'_1\eta_1, \xi_1\rangle_{\mathcal{H}_1} \xi_1)  \nonumber
\end{eqnarray}
which is an element of $\mathcal{H}^0_2 \oplus (\mathcal{H}^0_2 \otimes \mathcal{H}_1^0)$ as $B'_1 \in \mathfrak{A}_2^0$.  Since $JB'_mA'_m\cdots B'_2A'_2$ is zero on $\mathcal{H}^0_2 \oplus (\mathcal{H}^0_2 \otimes \mathcal{H}_1^0)$ by earlier discussions, $T$ and $R$ are zero on $\mathcal{H}^0_1$.  In addition, if $\eta_1 \in \mathcal{H}^0_1$ and $\zeta_1 \in \mathcal{H}^0_2$ then
\begin{eqnarray} 
B'_1A'_1(\eta_1 \otimes \zeta_1)\!\!\!\!&=&\!\!\!\! B'_1(\langle A'_1\eta_1, \xi_1\rangle_{\mathcal{H}_1} \zeta_1 + (A'_1\eta_1 - \langle A'_1\eta_1, \xi_1\rangle_{\mathcal{H}_1} \xi_1) \otimes \zeta_1  )\nonumber\\
&=&\!\!\!\!\langle A'_1\eta_1, \xi_1\rangle_{\mathcal{H}_1} \langle B'_1 \zeta_1, \xi_2\rangle_{\mathcal{H}_2}\xi_0 + \langle A'_1\eta, \xi_1\rangle_{\mathcal{H}_1}(B'_1\zeta_1 - \langle B'_1 \zeta_1, \xi_2\rangle_{\mathcal{H}_2}\xi_2) \nonumber\\
&&\!\!\!\!  +  B'_1\xi_2 \otimes (A'_1\eta_1 - \langle A'_1\eta_1, \xi_1\rangle_{\mathcal{H}_1} \xi_1) \otimes \zeta_1 \nonumber
\end{eqnarray}
which is an element of $\mathbb{C}\xi_0 \oplus \mathcal{H}^0_2 \oplus (\mathcal{H}^0_2 \otimes \mathcal{H}_1^0 \otimes \mathcal{H}^0_2)$ as $B'_1 \in \mathfrak{A}_2^0$.  Since $JB'_mA'_m\cdots B'_2A'_2$ is zero on $\mathbb{C}\xi_0 \oplus \mathcal{H}^0_2 \oplus (\mathcal{H}^0_2 \otimes \mathcal{H}_1^0 \otimes \mathcal{H}^0_2)$ by earlier discussions, $T$ and $R$ are zero on $\mathcal{H}^0_1 \otimes \mathcal{H}^0_2$.
\par
Now suppose $\eta_1 \in \mathcal{H}^0_1$, $\zeta_1 \in \mathcal{H}^0_2$, and
\[
\theta \in \left(\bigoplus_{k\geq 1}(\mathcal{H}_1^0 \otimes \mathcal{H}_2^0)^{\otimes k}\right) \oplus \left( \bigoplus_{k\geq 0} (\mathcal{H}_1^0 \otimes \mathcal{H}_2^0)^{\otimes k} \otimes \mathcal{H}_1^0\right).
\]
Then
\begin{eqnarray} 
B'_1A'_1(\eta_1 \otimes \zeta_1 \otimes \theta)\!\!\!\!&=&\!\!\!\!B'_1(\langle A'_1\eta_1, \xi_1\rangle_{\mathcal{H}_1} \zeta_1 \otimes \theta + (A'_1\eta_1 - \langle A'_1\eta_1, \xi_1\rangle_{\mathcal{H}_1} \xi_1) \otimes \zeta_1 \otimes \theta  ) \nonumber\\
&=&\!\!\!\!  \langle A'_1\eta_1, \xi_1\rangle_{\mathcal{H}_1} \langle B'_1 \zeta_1, \xi_2\rangle_{\mathcal{H}_2}\theta + \langle A'_1\eta_1, \xi_1\rangle_{\mathcal{H}_1}(B'_1\zeta_1 - \langle B'_1 \zeta_1, \xi_2\rangle_{\mathcal{H}_2}\xi_2) \otimes \theta \nonumber\\
&&\!\!\!\!  +  B'_1\xi_2 \otimes (A'_1\eta_1 - \langle A'_1\eta_1, \xi_1\rangle_{\mathcal{H}_1} \xi_1) \otimes \zeta_1 \otimes \theta. \nonumber
\end{eqnarray}
Therefore, since $JB'_mA'_m\cdots B'_2A'_2$ is zero on 
\[
\mathbb{C} \xi_0 \oplus \left(\bigoplus_{k\geq 1}(\mathcal{H}_2^0 \otimes \mathcal{H}_1^0)^{\otimes k}\right) \oplus \left( \bigoplus_{k\geq 0} (\mathcal{H}_2^0 \otimes \mathcal{H}_1^0)^{\otimes k} \otimes \mathcal{H}_2^0\right),
\]
we obtain that
\[
T(\eta_1 \otimes \zeta_1 \otimes \theta) = \langle A'_1\eta_1, \xi_1\rangle_{\mathcal{H}_1} \langle B'_1 \zeta_1, \xi_2\rangle_{\mathcal{H}_2} A_1B_{1} \cdots A_nB_nJB'_mA'_m\cdots B'_2A'_2(\theta)
\]
and
\[
R(\eta_1 \otimes \zeta_1 \otimes \theta) = \langle A'_1\eta_1, \xi_1\rangle_{\mathcal{H}_1} \langle B'_1 \zeta_1, \xi_2\rangle_{\mathcal{H}_2} B_{1}A_2 \cdots A_nB_nJB'_mA'_m\cdots B'_2A'_2(\theta). 
\]
Hence the result follows easily by the induction hypothesis.  
\par
The above proof shows that if 
\[
\eta_1 \otimes \zeta_1 \otimes \cdots \otimes \eta_m \otimes \zeta_m \otimes \eta \in (\mathcal{H}_1^0 \otimes \mathcal{H}_{2}^0)^{\otimes m} \otimes \mathcal{H}_1^0
\]
then
\begin{eqnarray} 
\!\!\!\!&&\!\!\!\! T(\eta_1 \otimes \zeta_1 \otimes \cdots \otimes \eta_m \otimes \zeta_m \otimes \eta)\nonumber\\
&=&\!\!\!\! \left(\prod_{k=1}^m \langle  \eta_k, (A'_k)^*\xi_1\rangle_{\mathcal{H}_1}\right)\left(\prod_{k=1}^m \langle \zeta_k, ( B'_k )^*\xi_2\rangle_{\mathcal{H}_2}\right) (A_1\xi_1) \otimes (B_1\xi_2) \otimes \cdots \otimes (A_n\xi_1) \otimes (B_n\xi_2) \otimes J \eta \nonumber
\end{eqnarray}
and
\begin{eqnarray} 
\!\!\!\!&&\!\!\!\! R(\eta_1 \otimes \zeta_1 \otimes \cdots \otimes \eta_m \otimes \zeta_m \otimes \eta)\nonumber\\
&=&\!\!\!\! \left(\prod_{k=1}^m \langle  \eta_k, (A'_k)^*\xi_1\rangle_{\mathcal{H}_1}\right)\left(\prod_{k=1}^m \langle \zeta_k, ( B'_k )^*\xi_2\rangle_{\mathcal{H}_2}\right) (B_1\xi_2) \otimes (A_2 \xi_1) \otimes \cdots \otimes (A_n\xi_1) \otimes (B_n\xi_2) \otimes J \eta.  \nonumber
\end{eqnarray}
\end{disc}
\begin{disc}
\label{idealdis2}
Similarly if $n\geq 0$, $m\geq 1$, $\{A_i\}^n_{i=1}, \{A'_j\}^m_{j=1} \subseteq \mathfrak{A}_1^0$, $\{B_i\}^n_{i=1}, \{B'_j\}^m_{j=1} \subseteq \mathfrak{A}_2^0$, and $J \in \mathfrak{J}$ then 
\[
T = A_1B_{1} \cdots A_nB_nJB'_mA'_m\cdots A'_2B'_1\,\,\,\,\,\,\,\,\,\,\mbox{ and }\,\,\,\,\,\,\,\,\,R = B_{1}A_2 \cdots A_nB_nJB'_mA'_m\cdots A'_2B'_1
\]
are non-zero only on the direct summand
\[
\left(\bigoplus_{k\geq m}(\mathcal{H}_2^0 \otimes \mathcal{H}_1^0)^{\otimes k}\right) \oplus \left( \bigoplus_{k\geq m} (\mathcal{H}_2^0 \otimes \mathcal{H}_1^0)^{\otimes k} \otimes \mathcal{H}_2^0\right)   \subseteq (\mathcal{H}_1, \xi_1) \ast (\mathcal{H}_2,\xi_2),
\]
if $k \geq m$ and
\[
\zeta_1 \otimes \eta_1 \otimes \cdots \otimes \zeta_m \otimes \eta_m \otimes \zeta_{m+1} \otimes \eta_{m+1} \otimes \cdots \otimes \eta_k \otimes \zeta_{k+1} \in (\mathcal{H}_2^0 \otimes \mathcal{H}_1^0)^{\otimes k} \otimes \mathcal{H}_2^0
\]
then
\[
S(\zeta_1 \otimes \eta_1 \otimes \cdots \otimes \zeta_m \otimes \eta_m \otimes \zeta_{m+1} \otimes \eta_{m+1} \otimes \cdots \otimes \eta_k \otimes \zeta_{k+1}) = S(\zeta_1 \otimes \eta_1 \otimes \cdots \otimes \zeta_m \otimes \eta_m ) \otimes \zeta_{m+1} \otimes \eta_{m+1} \otimes \cdots \otimes \eta_k \otimes \zeta_{k+1}
\]
for $S = T$ and $S = R$, if $k \geq m$ and
\[
\zeta_1 \otimes \eta_1 \otimes \cdots \otimes \zeta_m \otimes \eta_m \otimes \zeta_{m+1} \otimes \eta_{m+1} \otimes \cdots \otimes \zeta_{k} \otimes \eta_{k} \in (\mathcal{H}_2^0 \otimes \mathcal{H}_1^0)^{\otimes k}
\]
then
\[
S(\zeta_1 \otimes \eta_1 \otimes \cdots \otimes \zeta_m \otimes \eta_m \otimes \zeta_{m+1} \otimes \eta_{m+1} \otimes \cdots \otimes \zeta_{k+1} \otimes \eta_{k+1}) = S(\zeta_1 \otimes \eta_1 \otimes \cdots \otimes \zeta_m \otimes \eta_m)\otimes \zeta_{m+1} \otimes \eta_{m+1} \otimes \cdots \otimes \zeta_k \otimes \eta_{k}
\]
for $S = T$ and $S = R$, and if 
\[
\zeta_1 \otimes \eta_2 \otimes \cdots \otimes \eta_m \otimes \zeta_m \otimes \eta \in (\mathcal{H}_2^0 \otimes \mathcal{H}_{1}^0)^{\otimes m}
\]
then
\begin{eqnarray} 
\!\!\!\!&&\!\!\!\! T(\zeta_1 \otimes \eta_1 \otimes \cdots \otimes \zeta_m \otimes \eta_m)\nonumber\\
&=&\!\!\!\! \left(\prod_{k=2}^m \langle  \eta_{k}, (A'_k)^*\xi_1\rangle_{\mathcal{H}_1}\right)\left(\prod_{k=1}^m \langle \zeta_k, ( B'_k )^*\xi_2\rangle_{\mathcal{H}_2}\right)(A_1 \xi_1) \otimes  (B_1\xi_2) \otimes \cdots \otimes (A_n\xi_1) \otimes (B_n\xi_2) \otimes J \eta.  \nonumber
\end{eqnarray}
and
\begin{eqnarray} 
\!\!\!\!&&\!\!\!\! R(\zeta_1 \otimes \eta_1 \otimes \cdots \otimes \zeta_m \otimes \eta_m)\nonumber\\
&=&\!\!\!\! \left(\prod_{k=2}^m \langle  \eta_{k}, (A'_k)^*\xi_1\rangle_{\mathcal{H}_1}\right)\left(\prod_{k=1}^m \langle \zeta_k, ( B'_k )^*\xi_2\rangle_{\mathcal{H}_2}\right) (B_1\xi_2) \otimes (A_2 \xi_1) \otimes \cdots \otimes (A_n\xi_1) \otimes (B_n\xi_2) \otimes J \eta.  \nonumber
\end{eqnarray}
\end{disc}

\subsection{Another Exact Sequence}

In this section we will examine a short exact sequence of C$^*$-algebras involving Toeplitz-Pimsner algebras.  An outline of the proof will be given after the following construction.
\begin{cons}
\label{directsumconstruction}
Let $\mathfrak{A}_1$ and $\mathfrak{A}_2$ be unital C$^*$-algebras, let $\mathfrak{J}$ be an ideal of $\mathfrak{A}_1$, let $\pi_{1,0} : \mathfrak{A}_1/\mathfrak{J} \to \mathcal{B}(\mathcal{H}_{1,0})$, $\pi_{1,1} : \mathfrak{A}_1 \to \mathcal{B}(\mathcal{H}_{1,1})$, and $\pi_2 : \mathfrak{A}_2 \to \mathcal{B}(\mathcal{H}_2)$ be unital representations such that $\pi_{1,0}$ and $\pi_2$ are faithful and, if $\mathcal{H}_1 := \mathcal{H}_{1,0} \oplus \mathcal{H}_{1,1}$ and $q : \mathfrak{A}_1 \to \mathfrak{A}_1/\mathfrak{J}$ is the canonical quotient map, $\pi_1 := (\pi_{1,0} \circ q) \oplus \pi_{1,1} : \mathfrak{A}_1 \to \mathcal{B}(\mathcal{H}_1)$ is faithful, and let $\xi_1 \in \mathcal{H}_{1,0}$ and $\xi_2 \in \mathcal{H}_2$ be unit vectors.  For notational purposes let $\mathcal{H}_{2,0} := \mathcal{H}_2$ and let $\pi_{2,0} := \pi_2 : \mathfrak{A}_2 \to \mathcal{B}(\mathcal{H}_{2,0})$.
\par
Consider the Hilbert space $\mathcal{K} := \mathcal{K}_1 \oplus \mathcal{K}_2$ where
\[
\mathcal{K}_i := \bigoplus_{n\geq 1} \bigoplus_{
\begin{array}{c}
\{i_k\}^n_{k=1} \subseteq \{1,2\}, i_1 = i  \\
i_k \neq i_{k+1} \mbox{ for }k \in \{1,\ldots, n\}
\end{array} }
\mathcal{H}_{i_1} \otimes \cdots \otimes \mathcal{H}_{i_n}
\]
for $i \in \{1,2\}$. To simplify notation, for all $i \in \{1,2\}$ and $n \in \mathbb{N}$ let
\[
\mathcal{L}_{i,n} := \mathcal{H}_{i_1} \otimes \cdots \otimes \mathcal{H}_{i_n}
\]
where $\{i_k\}^n_{k=1} \subseteq \{1,2\}$, $i_1 = i$, and $i_k \neq i_{k+1} \mbox{ for }k \in \{1,\ldots, n\}$.  Thus
\[
\mathcal{K} = \bigoplus_{n \in \mathbb{N},i \in \{1,2\}}  \mathcal{L}_{i,n}.
\]
\par
Let $S \in \mathcal{B}(\mathcal{K})$ be the isometry defined by
\[
S(\eta_1 \otimes \cdots \otimes \eta_n) = \xi_1 \otimes \eta_1 \otimes \cdots \otimes \eta_n \in \mathcal{L}_{1,n+1}
\]
for all $\eta_1 \otimes \cdots \otimes \eta_n \in \mathcal{L}_{2,n}$,
\[
S(\eta_1 \otimes \cdots \otimes \eta_n) = \xi_2 \otimes \eta_1 \otimes \cdots \otimes \eta_n \in \mathcal{L}_{2,n+1}
\]
for all $\eta_1 \otimes \cdots \otimes \eta_n \in \mathcal{L}_{1,n}$, and by extending by linearity and density.  It is clear that the action of $S^* \in \mathcal{B}(\mathcal{K})$ is given by $S^*(\eta) = 0$ for all $\eta \in \mathcal{L}_{1,1} \oplus \mathcal{L}_{2,1}$ and
\[
S^*(\eta_1 \otimes \cdots \otimes \eta_n) = \langle \eta_1, \xi_i\rangle_{\mathcal{H}_i} \eta_2 \otimes \cdots \otimes \eta_n 
\]
for all $\eta_1 \otimes \cdots \otimes \eta_n \in \mathcal{L}_{i,n}$ and $i \in \{1,2\}$.
\par
Notice that $\mathfrak{A}_1 \oplus \mathfrak{A}_2$ has a faithful representation on $\mathcal{K}$ given by 
\[
(A_1 \oplus A_2)(\eta_1 \otimes \cdots \otimes \eta_n) = \pi_i(A_i)\eta_1 \otimes \cdots \otimes \eta_n
\]
for all $\eta_1 \otimes \cdots \otimes \eta_n \in \mathcal{L}_{i,n}$, $A_j \in \mathfrak{A}_j$, and $i \in \{1,2\}$.  Let $C^*(\mathfrak{A}_1 \oplus \mathfrak{A}_2, S)$ denote the C$^*$-subalgebra of $\mathcal{B}(\mathcal{K})$ generated by $\mathfrak{A}_1 \oplus \mathfrak{A}_2$ and $S$.   From this point onward we will suppress the representations $\pi_i$ and view $\mathfrak{A}_i \subseteq \mathfrak{A}_1 \oplus \mathfrak{A}_2 \subseteq C^*(\mathfrak{A}_1 \oplus \mathfrak{A}_2, S)$ canonically.  The C$^*$-algebra $C^*(\mathfrak{A}_1 \oplus \mathfrak{A}_2, S)$ is called a Toeplitz-Pimsner C$^*$-algebra (usually it is required that $\pi_1$ and $\pi_2$ are faithful GNS representations).
\par
Similarly consider the Hilbert space $\mathcal{K}_0 := \mathcal{K}_{1,0} \oplus \mathcal{K}_{2,0}$ where
\[
\mathcal{K}_{i,0} := \bigoplus_{n\geq 1} \bigoplus_{
\begin{array}{c}
\{i_k\}^n_{k=1} \subseteq \{1,2\}, i_1 = i  \\
i_k \neq i_{k+1} \mbox{ for }k \in \{1,\ldots, n\}
\end{array} }
\mathcal{H}_{i_1,0} \otimes \cdots \otimes \mathcal{H}_{i_n,0}
\]
for $i \in \{1,2\}$.  Let $S_0 \in \mathcal{B}(\mathcal{K})$ be the isometry defined by
\[
S_0(\eta_1 \otimes \cdots \otimes \eta_n) = \xi_1 \otimes \eta_1 \otimes \cdots \otimes \eta_n
\]
for all $\eta_1 \otimes \cdots \otimes \eta_n \in \mathcal{K}_{2,0}$,
\[
S_0(\eta_1 \otimes \cdots \otimes \eta_n) = \xi_2 \otimes \eta_1 \otimes \cdots \otimes \eta_n
\]
for all $\eta_1 \otimes \cdots \otimes \eta_n \in \mathcal{K}_{1,0}$, and by extending by linearity and density.  
\par
Notice that $(\mathfrak{A}_1/\mathfrak{J}) \oplus \mathfrak{A}_2$ has a faithful representation on $\mathcal{K}_0$ given by 
\[
(A_1 \oplus A_2)(\eta_1 \otimes \cdots \otimes \eta_n) = \pi_{i,0}(A_i)\eta_1 \otimes \cdots \otimes \eta_n
\]
for all $\eta_1 \otimes \cdots \otimes \eta_n \in \mathcal{K}_i$, $A_1 \in \mathfrak{A}_1/\mathfrak{J}$, $A_2 \in \mathfrak{A}_2$, and $i \in \{1,2\}$.  Let $C^*((\mathfrak{A}_1/\mathfrak{J}) \oplus \mathfrak{A}_2, S_0)$ denote the C$^*$-subalgebra of $\mathcal{B}(\mathcal{K}_0)$ generated by $(\mathfrak{A}_1/\mathfrak{J}) \oplus \mathfrak{A}_2$ and $S_0$.  From this point onward, we will suppress the representations $\pi_{i,0}$ and view $\mathfrak{A}_1/\mathfrak{J}, \mathfrak{A}_2 \subseteq C^*((\mathfrak{A}_1/\mathfrak{J}) \oplus \mathfrak{A}_2, S_0)$ canonically.
\par
Notice $\mathcal{K}_0$ may be viewed canonically as a Hilbert subspace of $\mathcal{K}$ since $\mathcal{H}_{1,0} \subseteq \mathcal{H}_1$ and $\mathcal{H}_{2,0} = \mathcal{H}_2$.  By considering the actions of $\mathfrak{A}_1$, $\mathfrak{A}_2$, $S$, and $S^*$, it is easy to see that $\mathcal{K}_0$ is a reducing subspace of $C^*(\mathfrak{A}_1 \oplus \mathfrak{A}_2, S)$ since $\xi_1 \in \mathcal{H}_{1,0}$ and $\xi_2 \in \mathcal{H}_{2,0}$.
\par
Let $\pi' : \mathcal{B}(\mathcal{K}) \to \mathcal{B}(\mathcal{K}_0)$ be the compression of $\mathcal{B}(\mathcal{K})$ onto $\mathcal{B}(\mathcal{K}_0)$; that is, if $P_{\mathcal{K}_0}$ is the orthogonal projection of $\mathcal{K}$ onto $\mathcal{K}_0$, 
\[
\pi'(T) = P_{\mathcal{K}_0} T|_{\mathcal{K}_0}
\]
for all $T \in \mathcal{B}(\mathcal{K})$.  It is trivial to verify that
\[
\pi'(S) = S_0
\]
and
\[
\pi'(A_1 \oplus A_2) = (A_1 + \mathfrak{J}) \oplus A_2 \in C^*((\mathfrak{A}_1/\mathfrak{J}) \oplus \mathfrak{A}_2, S_0)
\]
for all $A_1 \oplus A_2 \in \mathfrak{A}_1 \oplus \mathfrak{A}_2$.  Since $\mathcal{K}_0 \subseteq \mathcal{K}$ is a reducing subspace of $C^*(\mathfrak{A}_1 \oplus \mathfrak{A}_2, S)$, $\pi'|_{C^*(\mathfrak{A}_1 \oplus \mathfrak{A}_2, S)}$ is a surjective $^*$-homomorphism.
\par
Let $\langle \mathfrak{J}\rangle_{C^*(\mathfrak{A}_1 \oplus \mathfrak{A}_2, S)}$ be the ideal of $C^*(\mathfrak{A}_1 \oplus \mathfrak{A}_2, S)$ generated by $\mathfrak{J} \subseteq \mathfrak{A}_1$.  Since $\pi_{1,0}(J) = 0$ for all $J \in \mathfrak{J}$, it is clear that 
\[
\langle \mathfrak{J}\rangle_{C^*(\mathfrak{A}_1 \oplus \mathfrak{A}_2, S)} \subseteq ker(\pi').
\]
The main result of this section is the following.
\end{cons}
\begin{thm}
\label{dse}
With the notation as in Construction \ref{directsumconstruction}, the sequence
\[
0\rightarrow\langle \mathfrak{J}\rangle_{C^*(\mathfrak{A}_1 \oplus \mathfrak{A}_2, S)}\rightarrow C^*(\mathfrak{A}_1 \oplus \mathfrak{A}_2, S)  \stackrel{\pi'}{\rightarrow} C^*((\mathfrak{A}_1/\mathfrak{J}) \oplus \mathfrak{A}_2, S_0) \rightarrow 0
\]
is exact.
\end{thm}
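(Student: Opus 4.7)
The surjectivity of $\pi'$ and the inclusion $\langle \mathfrak{J}\rangle_{C^*(\mathfrak{A}_1 \oplus \mathfrak{A}_2, S)} \subseteq \ker(\pi')$ are already recorded in Construction \ref{directsumconstruction}, so the task is to prove the reverse inclusion $\ker(\pi') \subseteq \langle \mathfrak{J}\rangle_{C^*(\mathfrak{A}_1 \oplus \mathfrak{A}_2, S)}$. The plan is a gauge-invariant uniqueness argument of the sort standard for Toeplitz-Pimsner algebras. First I would introduce the canonical circle action $\alpha : \mathbb{T} \to \operatorname{Aut}(C^*(\mathfrak{A}_1 \oplus \mathfrak{A}_2, S))$ implemented by the unitaries $U_z$ on $\mathcal{K}$ that act on $\mathcal{L}_{i,n}$ as multiplication by $z^n$. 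One checks $\alpha_z(S) = zS$ and that $\alpha_z$ fixes $\mathfrak{A}_1 \oplus \mathfrak{A}_2$ pointwise. The same recipe on $\mathcal{K}_0$ yields an action $\alpha'$ on $C^*((\mathfrak{A}_1/\mathfrak{J}) \oplus \mathfrak{A}_2, S_0)$, and $\pi'$ is equivariant. Averaging against Haar measure on $\mathbb{T}$ gives faithful conditional expectations $E$ and $E'$ onto the respective fixed-point subalgebras $\mathcal{F}$ and $\mathcal{F}_0$, satisfying $\pi' \circ E = E' \circ \pi'$.

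Since $\mathfrak{J}$ lies in the degree-zero coefficient algebra, $\langle \mathfrak{J}\rangle$ is $\alpha$-invariant, so $\alpha$ descends to a continuous $\mathbb{T}$-action on the quotient $C^*(\mathfrak{A}_1 \oplus \mathfrak{A}_2, S)/\langle \mathfrak{J}\rangle$ with a faithful conditional expectation $\tilde{E}$ (faithfulness following from the general principle that the averaging of a continuous compact-group action on a C$^*$-algebra produces a faithful expectation). This reduces matters to proving $\ker(\pi'|_{\mathcal{F}}) \subseteq \langle \mathfrak{J}\rangle \cap \mathcal{F}$: for any $T \in \ker(\pi')$, the positive element $E(T^*T)$ lies in $\mathcal{F}$ with $\pi'(E(T^*T)) = E'(\pi'(T^*T)) = 0$, hence $E(T^*T) \in \langle \mathfrak{J}\rangle$ by the reduced claim, and faithfulness of $\tilde{E}$ then forces $T \in \langle \mathfrak{J}\rangle$.

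The main obstacle is the structural analysis of $\mathcal{F}$ needed to establish $\ker(\pi'|_{\mathcal{F}}) \subseteq \langle \mathfrak{J}\rangle \cap \mathcal{F}$. I would filter $\mathcal{F}$ by subalgebras $\mathcal{F}_n$ consisting of the closed span of degree-zero monomials in $S$, $S^*$, and $\mathfrak{A}_1 \oplus \mathfrak{A}_2$ with at most $2n$ occurrences of $S$ or $S^*$. Using $S^*S = I$ and the alternating tensor structure of $\mathcal{K}$, each such monomial preserves every level $\mathcal{L}_{1,k} \oplus \mathcal{L}_{2,k}$ and acts in terms of coefficients drawn from $\mathfrak{A}_1 \oplus \mathfrak{A}_2$ applied to particular tensor slots, with the remaining slots acting through rank-one operators determined by the distinguished vectors $\xi_1$ and $\xi_2$. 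Since $\mathcal{K}_0$ is reducing and $\pi_{1,0}$ is faithful on $\mathfrak{A}_1/\mathfrak{J}$, an $\mathfrak{A}_1$-coefficient annihilates its $\mathcal{H}_{1,0}$-part precisely when it belongs to $\mathfrak{J}$. An induction on $n$ then identifies $\ker(\pi'|_{\mathcal{F}_n})$ with the closed span of those degree-zero monomials featuring at least one factor from $\mathfrak{J}$, which lies inside $\langle \mathfrak{J}\rangle \cap \mathcal{F}_n$. Passing to the closure yields the required inclusion and completes the reduction.
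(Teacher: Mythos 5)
Your overall architecture---the gauge action implemented by the unitaries $U_\theta$, the averaging expectation onto the fixed-point algebra, the equivariance of $\pi'$, and the reduction of exactness to a statement about the fixed-point algebra---is exactly the paper's. Your reduction step is in fact a slightly cleaner alternative: you pass to the quotient by $\langle \mathfrak{J}\rangle$, use faithfulness of the induced expectation there, and apply it to $E(T^*T)$, whereas the paper runs a Fej\'{e}r-kernel/Ces\`{a}ro-summation argument (Lemmas \ref{fourier} and \ref{lemma2.2.8}) recovering $T$ in norm from its Fourier coefficients $\mathcal{E}(S^jT)$ and $\mathcal{E}(T(S^*)^j)$. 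Both routes are legitimate ways to reduce to the degree-zero part.

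The gap is in the structural analysis of the fixed-point algebra, which is where essentially all of the work in the paper lives. Two specific points. First, your criterion that ``an $\mathfrak{A}_1$-coefficient annihilates its $\mathcal{H}_{1,0}$-part precisely when it belongs to $\mathfrak{J}$'' disposes of a single degree-zero monomial, but a generic element of $\ker(\pi')\cap\mathcal{F}_n$ is a limit of linear combinations $\sum_q (A^{(q)}_nS)\cdots(A^{(q)}_1S)A^{(q)}(S^*B^{(q)}_1)\cdots(S^*B^{(q)}_n)$, and near-vanishing of the sum on $\mathcal{K}_0$ says nothing about the individual middle coefficients $A^{(q)}$. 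The paper's Lemma \ref{complemma} handles this by Gram--Schmidt normalizing the outer vectors, packaging the coefficients into a matrix $[A_{i,j}]\in\mathcal{M}_p(\mathfrak{A}_1)$, and using that the restriction of $\sigma_p$ to $\mathcal{H}_{1,0}^{\oplus p}$ is a faithful representation of $\mathcal{M}_p(\mathfrak{A}_1)/\mathcal{M}_p(\mathfrak{J})\simeq\mathcal{M}_p(\mathfrak{A}_1/\mathfrak{J})$ to produce a lift $[J_{i,j}]\in\mathcal{M}_p(\mathfrak{J})$ with operator-norm control; an entrywise lift does not control the norm, so without this matricial step your induction does not close. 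Second, ``passing to the closure'' is not automatic: an element $T\in\ker(\pi')\cap\mathcal{F}$ is approximated by elements of $\bigcup_n\mathcal{F}_n$ that need not lie in $\ker(\pi')$, so knowing $\ker(\pi'|_{\mathcal{F}_n})\subseteq\langle\mathfrak{J}\rangle$ for every $n$ does not by itself give the inclusion for $\mathcal{F}$. You would either need to check that each $\mathcal{F}_n$ is a C$^*$-subalgebra and invoke the fact that a closed ideal of an inductive limit is the closure of the union of its intersections with the building blocks, or do what the paper does in Lemma \ref{tfinallemma}: show that for $T$ in the fixed-point algebra the compressions $P_{j,m}TP_{j,m}$ stabilize as $P_{j,n}TP_{j,n}\otimes I_{\mathcal{L}_{j,m-n}}$ up to $\epsilon$ for large even $n$, so that approximation on the finite-level projection $Q_n\mathcal{K}$ upgrades to approximation in the full operator norm.
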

To prove Theorem \ref{dse} we will split the proof into several smaller results.  The proof begins by examining some basic structural facts about $C^*(\mathfrak{A}_1 \oplus \mathfrak{A}_2, S)$.  Next an action of the unit circle on $C^*(\mathfrak{A}_1 \oplus \mathfrak{A}_2, S)$ is defined which enables us to create a `Fourier series' for $C^*(\mathfrak{A}_1 \oplus \mathfrak{A}_2, S)$.  It is then easy to see that the sequence is exact if and only if the possible `Fourier coefficients' of elements from $ker(\pi')$ and $\langle\mathfrak{J}\rangle_{C^*(\mathfrak{A}_1 \oplus \mathfrak{A}_2, S)}$ agree.  This fact that the Fourier coefficients agree is proved by directly analyzing the structure of these `Fourier coefficients'.
\begin{lem}
\label{S*}
For all $i \in \{1,2\}$ and $A \in \mathfrak{A}_i$
\[
S^*AS = \langle A\xi_i, \xi_i\rangle_{\mathcal{H}_i} P_{\mathcal{K}_j}
\]
where $j \in \{1,2\}\setminus \{i\}$ and $P_{\mathcal{K}_j}$ is the orthogonal projection of $\mathcal{K}$ onto $\mathcal{K}_j$.  In addition, for all $i \in \{1,2\}$, $j \in \{1,2\}\setminus \{i\}$, and $A, B \in \mathfrak{A}_i$ 
\[
ASB = 0,\,\,\,\,\,\,\,\,\, AS^*B = 0, \,\,\,\,\,\,\,\,\, P_{\mathcal{K}_j}SA = SA, \,\,\,\,\,\,\,\,\,\mbox{and}\,\,\,\,\,\,\,\,\, AS^*P_{\mathcal{K}_j} = AS^*.
\]
Whence 
\[
span\left\{ (A_{1}S)(A_{2}S) \cdots (A_{n}S)A_{{n+1}}(S^*A_{{n+2}}) \cdots (S^*A_{{n+m+1}})  \, \left| \,
\begin{array}{l}
n,m\geq 0, \{i_k\}^{n+m+1}_{k=1} \subseteq \{1,2\},  \\
i_k \neq i_{k+1} \mbox{ for }k \in \{1,\ldots, n\}, A_k \in \mathfrak{A}_{i_k} 
\end{array}   
\right.  \right\}
\]
is dense in $C^*(\mathfrak{A}_1 \oplus \mathfrak{A}_2, S)$.
\end{lem}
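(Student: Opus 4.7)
The four algebraic identities reduce to direct computations using the definitions of $S$, $S^*$, and the representation of $\mathfrak{A}_i$ on $\mathcal{K}$. The two observations doing all the work are: both $S$ and $S^*$ swap the subspaces $\mathcal{K}_1$ and $\mathcal{K}_2$ (since $S$ prepends $\xi_j$ to a vector in $\mathcal{L}_{i,n}$, producing an element of $\mathcal{L}_{j,n+1}\subseteq \mathcal{K}_j$, and $S^*$ strips off the leading $\xi_i$-component from a vector in $\mathcal{L}_{i,n}$, producing an element of $\mathcal{L}_{j,n-1}\subseteq \mathcal{K}_j$); and an element $A \in \mathfrak{A}_i$, viewed inside $\mathfrak{A}_1 \oplus \mathfrak{A}_2$ via the canonical embedding $A\mapsto (A,0)$ or $(0,A)$, acts through $\pi_i(A)$ on the leading tensor factor of any vector in $\mathcal{K}_i$ and is identically zero on $\mathcal{K}_j$ for $j\neq i$. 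From these, $ASB = 0$ and $AS^*B = 0$ (for $A, B \in \mathfrak{A}_i$) follow because $B$ lands in $\mathcal{K}_i$, $S$ or $S^*$ sends this into $\mathcal{K}_j$, and $A$ then annihilates $\mathcal{K}_j$. The identities $P_{\mathcal{K}_j}SA = SA$ and $AS^*P_{\mathcal{K}_j} = AS^*$ simply record that $SA$ and $AS^*$ are supported on $\mathcal{K}_j$. Finally, the formula $S^*AS = \langle A\xi_i,\xi_i\rangle_{\mathcal{H}_i}P_{\mathcal{K}_j}$ is obtained by tracking a typical vector $\eta \in \mathcal{L}_{l,n}$ through the three operators: the product vanishes on $\mathcal{K}_i$ (since there $AS\eta = 0$) and satisfies $S^*AS\eta = \langle A\xi_i,\xi_i\rangle \eta$ on $\mathcal{K}_j$.

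For the density statement, the plan is to reduce an arbitrary monomial in the generators $\mathfrak{A}_1 \cup \mathfrak{A}_2 \cup \{S,S^*\}$ to the prescribed normal form. I would first use the splitting $\mathfrak{A}_1 \oplus \mathfrak{A}_2 = \mathfrak{A}_1 + \mathfrak{A}_2$ induced by the canonical embeddings to assume every algebra factor in a monomial lies in exactly one of $\mathfrak{A}_1$ or $\mathfrak{A}_2$. Exploiting the direct-sum relation $CD = 0$ whenever $C \in \mathfrak{A}_1$ and $D \in \mathfrak{A}_2$, together with closure of each $\mathfrak{A}_i$ under multiplication, permits collapsing consecutive algebra factors into a single one. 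Inserting the identity $I = I_{\mathfrak{A}_1} + I_{\mathfrak{A}_2}$ at gaps between adjacent $S/S^*$ symbols (and splitting) puts every monomial into the form $A_0 X_1 A_1 X_2 \cdots X_p A_p$ with each $A_k \in \mathfrak{A}_1 \cup \mathfrak{A}_2$ and each $X_k \in \{S, S^*\}$.

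The central reduction then applies the relation $S^*AS = \langle A\xi_i,\xi_i\rangle P_{\mathcal{K}_j}$ iteratively: wherever a pattern $S^* A S$ occurs within a monomial, replace it by a scalar multiple of $P_{\mathcal{K}_j}$, identify $P_{\mathcal{K}_j}$ with the image of $I_{\mathfrak{A}_j}$ inside $C^*(\mathfrak{A}_1 \oplus \mathfrak{A}_2, S)$, and absorb it into the neighboring algebra factors. Each such step strictly decreases the number of $S^*$ symbols lying to the left of an $S$, so the process terminates with a monomial of the form $A_0 S A_1 S \cdots S A_n S^* A_{n+1} S^* \cdots S^* A_{n+m+1}$, which is precisely the claimed normal form after regrouping and an index shift. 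The alternation condition $i_k \neq i_{k+1}$ for $k \in \{1,\ldots, n\}$ may be imposed at no cost, since $ASB = 0$ for $A, B \in \mathfrak{A}_i$ makes any violating monomial vanish.

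The main obstacle I anticipate is the bookkeeping in the density reduction: specifically ensuring that the scalar multiples of projections $P_{\mathcal{K}_j}$ produced by the $S^*AS$ collapses always get absorbed correctly into neighboring algebra factors to yield a monomial of exactly the stated shape, and that no auxiliary patterns left over from the identity insertions are overlooked. The four algebraic identities themselves are essentially immediate once the action of the generators on the decomposition $\mathcal{K} = \bigoplus_{i,n} \mathcal{L}_{i,n}$ is made explicit.
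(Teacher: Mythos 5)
Your proposal is correct and follows essentially the same route as the paper: the four identities are verified by tracking vectors through the decomposition $\mathcal{K} = \bigoplus_{i,n}\mathcal{L}_{i,n}$, and density is obtained by using those relations (together with the identification of $P_{\mathcal{K}_j}$ with $I_{\mathfrak{A}_j}$ and unitality) to reduce words in $Alg(\mathfrak{A}_1,\mathfrak{A}_2,S,S^*)$ to the stated normal form. The paper leaves the normal-form reduction as a one-line assertion; your write-up simply makes that bookkeeping explicit.
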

\begin{proof}
Fix $i \in \{1,2\}$, let $j \in \{1,2\}\setminus \{i\}$, and let $A \in \mathfrak{A}_i$.   If $\zeta \in \mathcal{K}_i$ then $S\zeta \in \mathcal{K}_2$ so $S^*AS\zeta = 0$.  However, if $\eta_1 \otimes \cdots\otimes \eta_n \in \mathcal{L}_{j,n}$ then
\begin{eqnarray} 
S^*AS(\eta_1 \otimes \cdots\otimes \eta_n)\!\!\!\!&=&\!\!\!\!S^*A(\xi_i \otimes \eta_1 \otimes \cdots\otimes \eta_n) \nonumber\\
&=&\!\!\!\! S^* ( (A\xi_i) \otimes \eta_1 \otimes \cdots\otimes \eta_n) \nonumber\\
&=&\!\!\!\! \langle A\xi_i, \xi_i\rangle_{\mathcal{H}_i} \eta_1 \otimes \cdots\otimes \eta_n\nonumber
\end{eqnarray}
Whence, by linearity and density, $S^*AS = \langle A\xi_i, \xi_i\rangle_{\mathcal{H}_i} P_{\mathcal{K}_j}$.  
\par
Fix $i \in \{1,2\}$, $j \in \{1,2\}\setminus \{i\}$, and $A, B \in \mathfrak{A}_i$.  To see $ASB = 0$ notice $A(\mathcal{L}_{j,n}) = \{0\}$ and $B(\mathcal{L}_{j,n}) = \{0\}$ for all $n$.  However $S(B(\mathcal{L}_{i,n})) \subseteq \mathcal{L}_{j,n+1}$ and thus $ASB = 0$.  Similarly $AS^*B = 0$.
To see $P_{\mathcal{K}_j}SA = SA$ notice $SA(\mathcal{L}_{j,n}) = \{0\}$ and $SA(\mathcal{L}_{i,n}) \subseteq \mathcal{L}_{j,n+1} \subseteq \mathcal{K}_j$.  Hence $P_{\mathcal{K}_j}SA = SA$.  Similarly $AS^*P_{\mathcal{K}_j} = AS^*$.
\par
Using the fact that $Alg(\mathfrak{A}_1, \mathfrak{A}_2, S, S^*)$ is dense in $C^*(\mathfrak{A}_1 \oplus \mathfrak{A}_2, S)$, $\mathfrak{A}_1 \oplus \mathfrak{A}_2$ is unital, the fact that $P_{\mathcal{K}_j} \in \mathfrak{A}_j$ for all $j \in \{1,2\}$, and the above results, we obtain that the desired span is dense in $C^*(\mathfrak{A}_1 \oplus \mathfrak{A}_2, S)$.  
\end{proof}
The next step in the proof is to define a action of the unit circle $\mathbb{T}$ on $\mathcal{B}(\mathcal{K})$.  For each $\theta \in [0,2\pi)$ define $U_\theta \in \mathcal{B}(\mathcal{K})$ by
\[
U_\theta(\eta_1 \otimes \cdots \otimes \eta_n) = e^{-n\theta\sqrt{-1}} \eta_1 \otimes \cdots \otimes \eta_n
\]
(where we use $\sqrt{-1}$ instead of $i$ as we commonly use $i$ as an index) for all $\eta_1 \otimes \cdots \otimes \eta_n \in \mathcal{L}_{i,n}$ and extend by linearity and density (that is, $U_\theta$ is multiplication by $e^{-n\theta\sqrt{-1}}$ on tensors of $\mathcal{K}$ of length $n$).  It is clear that $U_\theta$ is a unitary operator with $U_\theta^* = U_{-\theta}$ and $U_{\theta} U_{\beta} = U_{\theta + \beta}$ (where we view $\theta + \beta \mod 2\pi$).  Define the $^*$-homomorphisms $\alpha_\theta : \mathcal{B}(\mathcal{K}) \to \mathcal{B}(\mathcal{K})$ by $\alpha_\theta(T) = U^*_\theta T U_\theta$ for all $T \in \mathcal{B}(\mathcal{K})$. 
\begin{lem}
\label{lemma2.2.4}
If $T \in C^*(\mathfrak{A}_1 \oplus \mathfrak{A}_2, S)$ and $\alpha_\theta(T) = T$ for all $\theta \in [0,2\pi)$ then 
\[
T\left(\mathcal{L}_{i,n}\right) \subseteq \mathcal{L}_{i,n}
\]
for all $i \in \{1,2\}$ and for all $n \in \mathbb{N}$.
\end{lem}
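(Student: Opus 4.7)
The plan is to exploit the gauge action $\alpha_\theta$ to perform a Fourier-type decomposition of $T$. First I observe that the action is especially simple on generators: for $A \in \mathfrak{A}_1 \oplus \mathfrak{A}_2$ the operator $A$ preserves each $\mathcal{L}_{i,n}$ and hence commutes with every $U_\theta$, so $\alpha_\theta(A) = A$; while $S$ raises tensor length by one and so $\alpha_\theta(S) = e^{\theta\sqrt{-1}} S$. Consequently every spanning monomial $M = (A_1 S)\cdots(A_p S)A_{p+1}(S^* A_{p+2})\cdots(S^* A_{p+q+1})$ from Lemma \ref{S*} is an eigenvector with $\alpha_\theta(M) = e^{(p-q)\theta\sqrt{-1}} M$; I will call $p - q$ the \emph{degree} of $M$.

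Next I would define the Bochner integral
\[
E_0(T) := \frac{1}{2\pi}\int_0^{2\pi} \alpha_\theta(T)\, d\theta,
\]
which is well-defined and contractive because $\theta \mapsto \alpha_\theta(T)$ is norm-continuous (this is obvious on the dense span of monomials, and extends to general $T$ by a standard $\varepsilon/3$ estimate using that each $\alpha_\theta$ is an isometry). Applied to a monomial $M$ the map $E_0$ returns $M$ if $\deg M = 0$ and $0$ otherwise, so by continuity the range of $E_0$ lies in the closed span $\overline{\mathcal{D}_0}$ of degree-zero monomials. Under the hypothesis $\alpha_\theta(T) = T$ for all $\theta$ one obtains $E_0(T) = T$, hence $T \in \overline{\mathcal{D}_0}$.

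It then suffices to verify by direct inspection that every degree-zero monomial $M$ satisfies $M(\mathcal{L}_{i,n}) \subseteq \mathcal{L}_{i,n}$. Reading $M$ right to left applied to $\eta \in \mathcal{L}_{i,n}$, each factor $A_k \in \mathfrak{A}_{i_k}$ either preserves the current $\mathcal{L}_{i',n'}$ (if $i_k = i'$) or annihilates $\eta$; each $S^*$ flips the first coordinate and decrements $n'$ (or annihilates when $n' = 1$); and each $S$ flips the first coordinate and increments $n'$. Since a degree-zero monomial has equally many $S$'s and $S^*$'s (namely $p$ each), the $n'$-shifts cancel and the $2p$ first-coordinate flips leave the first index unchanged, so $M\eta$ lies in $\mathcal{L}_{i,n}$ (either as an honest walk-back or as $0$). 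Because each $\mathcal{L}_{i,n}$ is closed, the operators preserving it form a norm-closed subspace of $\mathcal{B}(\mathcal{K})$, and the conclusion passes to $T \in \overline{\mathcal{D}_0}$. The main obstacle is the combinatorial bookkeeping in this last step; the Fourier-decomposition portion is a routine gauge-action argument.
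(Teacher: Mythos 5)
Your argument is correct, but it takes a genuinely different route from the one in the paper. The paper's proof works directly with the eigenvector relation: it first observes that the two subspaces $\left(\bigoplus_{n = 1 \mod 2}\mathcal{L}_{1,n}\right)\oplus\left(\bigoplus_{n = 2 \mod 2}\mathcal{L}_{2,n}\right)$ and its companion are reducing for $C^*(\mathfrak{A}_1\oplus\mathfrak{A}_2,S)$, so that for $h\in\mathcal{L}_{i,m}$ the vector $Th$ has at most one nonzero component $h_j$ at each tensor length $j$; the identity $T=U_{-\theta}TU_\theta$ then forces $h_j=e^{-(m-j)\theta\sqrt{-1}}h_j$ for all $\theta$, which kills every $j\neq m$. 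You instead construct the averaging map $E_0$ --- which is precisely the conditional expectation $\mathcal{E}$ that the paper introduces only in the two lemmas immediately following this one --- identify its range with the closed span of the degree-zero monomials via the dense spanning set of Lemma \ref{S*}, and verify invariance of each $\mathcal{L}_{i,n}$ combinatorially on those monomials, passing to the closure at the end. Both arguments are sound. The paper's is the more economical for this particular statement: it needs neither the density of the monomial span nor the norm-continuity of $\theta\mapsto\alpha_\theta(T)$ (which you must establish before the integral defining $E_0$ makes sense), and it has no forward dependence on machinery the paper only develops afterwards. What your route buys is a sharper structural fact --- the fixed-point set of the gauge action is exactly the closed span of the degree-zero words --- which is more than the lemma asserts and dovetails with the Fej\'{e}r-kernel decomposition of Lemma \ref{fourier}; but as a proof of the stated inclusion it is the longer path.
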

\begin{proof} 
First it is clear that
\[
\left(\bigoplus_{n = 1 \mod{2}} \mathcal{L}_{1,n}\right) \oplus \left(\bigoplus_{n = 2 \mod{2}} \mathcal{L}_{2,n}\right)
\]
and
\[
\left(\bigoplus_{n = 1 \mod{2}} \mathcal{L}_{2,n}\right) \oplus \left(\bigoplus_{n = 2 \mod{2}} \mathcal{L}_{1,n}\right)
\]
are reducing subspaces of $C^*(\mathfrak{A}_1 \oplus \mathfrak{A}_2, S)$ since each is invariant under $\mathfrak{A}_1$, $\mathfrak{A}_2$, $S$, and $S^*$.
\par
Suppose otherwise that there exists an $i \in \{1,2\}$, an $m \in \mathbb{N}$, and an $h \in \mathcal{L}_{i,m}$ so that $T(h) \notin \mathcal{L}_{i,m}$.  Without loss of generality suppose
\[
\mathcal{L}_{i,m} \subseteq \left(\bigoplus_{n = 1 \mod{2}} \mathcal{L}_{1,n}\right) \oplus \left(\bigoplus_{n = 2 \mod{2}} \mathcal{L}_{2,n}\right).
\]
Thus $T(h) \in \left(\bigoplus_{n = 1 \mod{2}} \mathcal{L}_{1,n}\right) \oplus \left(\bigoplus_{n = 2 \mod{2}} \mathcal{L}_{2,n}\right)$.  Write
\[
T(h) = \bigoplus_{j \geq 1} h_j
\]
where $h_j \in \mathcal{L}_{1,j}$ if $j$ is odd and $h_j \in \mathcal{L}_{2,j}$ when $j$ is even.  Since $T(h) \notin \mathcal{L}_{i,m}$, there exists a $k \in \mathbb{N}\setminus \{m\}$ such that $h_k \neq 0$.  However
\[
\bigoplus_{j \geq 1} h_j = T(h) = \alpha_\theta(T)h =  U_{-\theta}TU_\theta h = U_{-\theta}T e^{-m\theta\sqrt{-1}} h = e^{-m\theta\sqrt{-1}} U_{-\theta}\left(\bigoplus_{j \geq 1} h_j\right) = \bigoplus_{j \geq 1} (e^{-(m-j)\theta\sqrt{-1}})h_j
\]
for all $\theta \in [0,2\pi)$.  Therefore $h_k = e^{-(m-k)\theta\sqrt{-1}} h_k$ for all $\theta \in [0,2\pi)$. As $k \neq m$ and $h_k \neq 0$, this is an impossibility.  
\end{proof}
Next we obtain some important results by considering the action of $\alpha_\theta$ on $C^*(\mathfrak{A}_1 \oplus \mathfrak{A}_2, S)$.
\begin{lem}
For all $\theta \in [0,2\pi)$ and all $A \in \mathfrak{A}_1 \oplus \mathfrak{A}_2$ 
\[
\alpha_\theta(S) = e^{\theta\sqrt{-1}} S \,\,\,\,\,\,\,\,\,\mbox{ and }\,\,\,\,\,\,\,\,\,\alpha_\theta(A) = A.
\]
Therefore $\theta \mapsto \alpha_\theta(T)$ is a continuous map for all $T \in C^*(\mathfrak{A}_1 \oplus \mathfrak{A}_2, S)$.  Hence the map 
\[
\mathcal{E} : C^*(\mathfrak{A}_1 \oplus \mathfrak{A}_2, S) \to C^*(\mathfrak{A}_1 \oplus \mathfrak{A}_2, S)
\]
given by
\[
\mathcal{E}(T) := \frac{1}{2\pi} \int^{2\pi}_0 \alpha_\theta(T) d\theta
\]
is a well-defined, contractive linear map with the property that $\alpha_\theta(\mathcal{E}(T)) = \mathcal{E}(T)$ for all $T \in C^*(\mathfrak{A}_1 \oplus \mathfrak{A}_2, S)$ and $\theta \in [0,2\pi)$.
\end{lem}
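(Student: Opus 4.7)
The plan is to verify the two explicit identities on the generators by direct computation on simple tensors, then promote norm-continuity of the orbit $\theta \mapsto \alpha_\theta(T)$ from generators to all of $C^*(\mathfrak{A}_1 \oplus \mathfrak{A}_2, S)$ via the standard density-plus-closure argument, and finally construct $\mathcal{E}$ as a Banach-space-valued Riemann integral, reading off its properties from the group law for $\alpha_\theta$ together with translation invariance of Lebesgue measure on the circle.

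For the two identities, the key observation is that, by definition of $U_\theta$, each subspace $\mathcal{L}_{i,n}$ is an eigenspace for $U_\theta$ with eigenvalue $e^{-n\theta\sqrt{-1}}$. Every $A \in \mathfrak{A}_1 \oplus \mathfrak{A}_2$ preserves each $\mathcal{L}_{i,n}$, acting only in the first tensor factor, so $A$ commutes with $U_\theta$ and thus $\alpha_\theta(A) = U_\theta^* A U_\theta = A$. For $S$, the key point is that $S$ sends $\mathcal{L}_{i,n}$ into $\mathcal{L}_{j,n+1}$ with $j \neq i$, shifting the $U_\theta$-weight by one unit. Concretely, for $\eta \in \mathcal{L}_{i,n}$,
\[
\alpha_\theta(S)\eta = U_\theta^* S U_\theta \eta = e^{-n\theta\sqrt{-1}} U_\theta^* S\eta = e^{-n\theta\sqrt{-1}} e^{(n+1)\theta\sqrt{-1}} S\eta = e^{\theta\sqrt{-1}} S\eta,
\]
and linearity plus density give $\alpha_\theta(S) = e^{\theta\sqrt{-1}} S$ on all of $\mathcal{K}$.

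Next, let $\mathcal{C} := \{T \in C^*(\mathfrak{A}_1 \oplus \mathfrak{A}_2, S) : \theta \mapsto \alpha_\theta(T) \text{ is norm-continuous}\}$. The two formulas above show that $\mathcal{C}$ contains $S$, $S^*$, and all of $\mathfrak{A}_1 \oplus \mathfrak{A}_2$. Because each $\alpha_\theta$ is a $^*$-automorphism of $\mathcal{B}(\mathcal{K})$, sums, products, and adjoints of elements with continuous orbits still have continuous orbits, so $\mathcal{C}$ is a unital $^*$-subalgebra; because each $\alpha_\theta$ is isometric, a routine $\varepsilon/3$-argument shows $\mathcal{C}$ is norm-closed. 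By Lemma \ref{S*} the $^*$-subalgebra generated by $S$, $S^*$, and $\mathfrak{A}_1 \oplus \mathfrak{A}_2$ is dense, so $\mathcal{C} = C^*(\mathfrak{A}_1 \oplus \mathfrak{A}_2, S)$. In particular each $\alpha_\theta$ restricts to a $^*$-automorphism of $C^*(\mathfrak{A}_1 \oplus \mathfrak{A}_2, S)$, since it preserves a dense $^*$-subalgebra of generators.

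Finally, the norm-continuity of $\theta \mapsto \alpha_\theta(T)$ as a map into the Banach space $C^*(\mathfrak{A}_1 \oplus \mathfrak{A}_2, S)$ makes the Riemann integral
\[
\mathcal{E}(T) := \frac{1}{2\pi} \int_0^{2\pi} \alpha_\theta(T)\, d\theta
\]
a well-defined element of $C^*(\mathfrak{A}_1 \oplus \mathfrak{A}_2, S)$, obtained as a norm limit of Riemann sums that already lie in this C$^*$-algebra. Linearity is automatic, and $\|\mathcal{E}(T)\| \leq \frac{1}{2\pi}\int_0^{2\pi} \|\alpha_\theta(T)\|\, d\theta = \|T\|$ since each $\alpha_\theta$ is isometric. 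For the invariance identity, because $\alpha_\theta$ is a bounded linear operator it commutes with the Riemann integral, so the group law $\alpha_\theta \circ \alpha_\beta = \alpha_{\theta + \beta}$ and the substitution $\gamma = \theta + \beta \pmod{2\pi}$ give
\[
\alpha_\theta(\mathcal{E}(T)) = \frac{1}{2\pi}\int_0^{2\pi} \alpha_{\theta + \beta}(T)\, d\beta = \frac{1}{2\pi}\int_0^{2\pi} \alpha_\gamma(T)\, d\gamma = \mathcal{E}(T)
\]
by translation invariance of Lebesgue measure on $\mathbb{T}$. The only real subtlety is the verification that $\mathcal{C}$ is norm-closed; everything else is routine once the action on generators is pinned down.
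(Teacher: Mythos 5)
Your proposal is correct and follows essentially the same route as the paper: verify $\alpha_\theta(A)=A$ and $\alpha_\theta(S)=e^{\theta\sqrt{-1}}S$ by direct computation on the subspaces $\mathcal{L}_{i,n}$, extend continuity of $\theta\mapsto\alpha_\theta(T)$ from the dense $^*$-algebra of generators using that each $\alpha_\theta$ is isometric, and obtain $\mathcal{E}$ as a norm-convergent Riemann integral whose invariance follows from the group law and translation invariance of Lebesgue measure. The only difference is that you spell out the closed-subalgebra argument for continuity and the fact that $\alpha_\theta$ preserves $C^*(\mathfrak{A}_1\oplus\mathfrak{A}_2,S)$, which the paper leaves implicit.
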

\begin{proof}
The fact that $\alpha_\theta(A) = A$ for all $\theta \in [0,2\pi)$ and $A \in \mathfrak{A}_1 \oplus \mathfrak{A}_2$ comes from the fact that each $\mathcal{L}_{i,n}$ is an invariant subspace of $\mathfrak{A}_1 \oplus \mathfrak{A}_2$ and thus $U_\theta \in (\mathfrak{A}_1 \oplus \mathfrak{A}_2)'$ (the commutant of $\mathfrak{A}_1 \oplus \mathfrak{A}_2$).  Notice for each $i \in \{1,2\}$ and each $\eta_1 \otimes \cdots \otimes \eta_n \in \mathcal{L}_{i,n}$ that
\begin{eqnarray} 
\alpha_\theta(S)(\eta_1 \otimes \cdots \otimes \eta_n)\!\!\!\!&=&\!\!\!\! U_{-\theta} S\left(e^{-n\theta\sqrt{-1}}\eta_1 \otimes \cdots \otimes \eta_n \right)\nonumber\\
&=&\!\!\!\!  U_{-\theta} \left(e^{-n\theta\sqrt{-1}} \xi_j \otimes\eta_1 \otimes \cdots \otimes \eta_n \right) \nonumber\\
&=&\!\!\!\! e^{(n+1)\theta\sqrt{-1}}e^{-n\theta\sqrt{-1}} \xi_j \otimes\eta_1 \otimes \cdots \otimes \eta_n \nonumber\\
&=&\!\!\!\! e^{\theta\sqrt{-1}}S(\eta_1\otimes\cdots\otimes\eta_n) \nonumber
\end{eqnarray}
where $j \in \{1,2\} \setminus \{i\}$.  Whence $\alpha_\theta(S) = e^{\theta\sqrt{-1}}S$ by linearity and density.
\par
To see $\theta \mapsto \alpha_\theta(T)$ is a continuous map for all $T \in C^*(\mathfrak{A}_1 \oplus \mathfrak{A}_2, S)$, notice the result holds for all $T \in Alg(\mathfrak{A}_1 \oplus \mathfrak{A}_2, S, S^*)$ by the above results.  Since each $\alpha_\theta$ is a contraction and $Alg(\mathfrak{A}_1 \oplus \mathfrak{A}_2, S, S^*)$ is dense in $C^*(\mathfrak{A}_1 \oplus \mathfrak{A}_2, S)$, the result follows.  
\par
The fact that $\mathcal{E}$ is a well-defined, contractive linear map is then trivial and the fact that $\alpha_\theta(\mathcal{E}(T)) = \mathcal{E}(T)$ for all $T \in C^*(\mathfrak{A}_1 \oplus \mathfrak{A}_2, S)$ follows from the fact that $\alpha_\theta \circ \alpha_\beta = \alpha_{\theta + \beta}$ (as $U_{\theta} U_{\beta} = U_{\theta + \beta}$) and the fact that the Lebesgue measure on the unit circle is translation invariant.  
\end{proof}
\par
The map $\mathcal{E}$ has many other properties (e.g. it is positive and faithful) that will not be needed.  What will be needed is the fact that $\mathcal{E}$ allows us to create a `Fourier series' for elements of $C^*(\mathfrak{A}_1 \oplus \mathfrak{A}_2, S)$.
\begin{lem}
\label{fourier}
For all $T \in C^*(\mathfrak{A}_1 \oplus \mathfrak{A}_2, S)$ and $n \in \mathbb{N}$ define
\[
\Sigma_n(T) := \sum^n_{j=0} \left( 1- \frac{j}{n+1}\right) (S^*)^j \mathcal{E}(S^j T) + \sum^n_{j=1}\left( 1- \frac{j}{n+1}\right)  \mathcal{E}(T(S^*)^j)S^j.
\]
Then $\lim_{n\to\infty}\left\|T - \Sigma_n(T)\right\| =0$.
\end{lem}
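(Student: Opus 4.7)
The plan is to recognize $\Sigma_n$ as the Fejér (Cesàro) sum associated with the circle action $\alpha$ on $C^*(\mathfrak{A}_1\oplus\mathfrak{A}_2,S)$, and to deduce the convergence from classical Fejér-kernel arguments. The starting observation is that the expression in the definition of $\Sigma_n(T)$ is structurally identical to a weighted sum of Fourier coefficients, with $(S^*)^j\mathcal{E}(S^jT)$ capturing the ``degree $-j$'' part of $T$ and $\mathcal{E}(T(S^*)^j)S^j$ capturing the ``degree $+j$'' part.

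First I would establish an integral representation
\[
\Sigma_n(T) \;=\; \frac{1}{2\pi}\int_0^{2\pi} F_{n+1}(\theta)\,\alpha_\theta(T)\,d\theta,
\]
where $F_{n+1}(\theta) = \sum_{|j|\leq n}\bigl(1-\tfrac{|j|}{n+1}\bigr)e^{j\theta\sqrt{-1}}$ is the $(n{+}1)$-st Fejér kernel. The key algebraic inputs are $\alpha_\theta(S)=e^{\theta\sqrt{-1}}S$ (from the previous lemma) and $S^*S = I_{\mathcal{K}}$ (which is manifest from the definition of $S$), so $(S^*)^jS^j = I_\mathcal{K}$ for all $j\ge 0$. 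Using $\mathcal{E}(X)=\frac{1}{2\pi}\int\alpha_\theta(X)\,d\theta$, a direct computation then yields
\[
(S^*)^j\mathcal{E}(S^jT) = \frac{1}{2\pi}\int_0^{2\pi} e^{j\theta\sqrt{-1}}\,\alpha_\theta(T)\,d\theta,\qquad \mathcal{E}(T(S^*)^j)S^j = \frac{1}{2\pi}\int_0^{2\pi} e^{-j\theta\sqrt{-1}}\,\alpha_\theta(T)\,d\theta,
\]
and summing with the given weights collapses to the Fejér kernel.

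Second, because $F_{n+1}\ge 0$ with $\frac{1}{2\pi}\int_0^{2\pi}F_{n+1}(\theta)\,d\theta = 1$ and each $\alpha_\theta$ is an isometric $^*$-automorphism, one reads off $\|\Sigma_n(T)\|\le \|T\|$ for every $n$. This uniform boundedness is essential for the density reduction that follows.

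Third, I would prove $\Sigma_n(T)\to T$ via the standard approximate-identity argument. Writing
\[
T - \Sigma_n(T) \;=\; \frac{1}{2\pi}\int_0^{2\pi} F_{n+1}(\theta)\bigl(T - \alpha_\theta(T)\bigr)\,d\theta,
\]
fix $\epsilon > 0$. By the preceding lemma the map $\theta\mapsto\alpha_\theta(T)$ is norm-continuous, so there is $\delta\in(0,\pi)$ with $\|T - \alpha_\theta(T)\| < \epsilon/2$ whenever $\theta$ lies within $\delta$ of $0$ on the circle. Splitting the integral across this arc and its complement and using positivity of $F_{n+1}$ gives
\[
\|T - \Sigma_n(T)\| \;\le\; \tfrac{\epsilon}{2} + \tfrac{2\|T\|}{2\pi}\int_{|\theta|\ge\delta} F_{n+1}(\theta)\,d\theta,
\]
and the remaining integral tends to $0$ as $n\to\infty$ by the standard fact that $(F_{n+1})$ is an approximate identity on the circle. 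The main technical point is the first step: carefully commuting powers of $S$ past the conditional expectation $\mathcal{E}$ and exploiting the cancellation $(S^*)^jS^j = I_\mathcal{K}$; once the integral representation is in hand, steps two and three are routine.
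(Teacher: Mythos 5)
Your proof is correct. The first two steps coincide exactly with the paper's: the paper also rewrites $\Sigma_n(T)$ as $\frac{1}{2\pi}\int_0^{2\pi}\sigma_n(\theta)\alpha_\theta(T)\,d\theta$ using $\alpha_\theta(S)=e^{\theta\sqrt{-1}}S$ and $(S^*)^jS^j=I_{\mathcal{K}}$, and deduces $\|\Sigma_n(T)\|\le\|T\|$ from positivity and normalization of the Fej\'{e}r kernel. Where you diverge is the final convergence step. The paper uses the contractivity of $\Sigma_n$ to reduce to the dense span of words $A_1SA_2S\cdots A_nSBS^*C_1S^*\cdots S^*B_m$, computes $\mathcal{E}(S^kT)$ and $\mathcal{E}(T(S^*)^k)$ explicitly for such a word, and finds $\Sigma_k(T)=\bigl(1-\tfrac{|n-m|}{k+1}\bigr)T$ for $k\ge|n-m|$, which converges trivially. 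You instead run the classical approximate-identity argument on $T-\Sigma_n(T)=\frac{1}{2\pi}\int_0^{2\pi}F_{n+1}(\theta)\bigl(T-\alpha_\theta(T)\bigr)\,d\theta$, splitting the circle at a small arc around $0$ and invoking the norm-continuity of $\theta\mapsto\alpha_\theta(T)$, which the preceding lemma in the paper does establish for all $T\in C^*(\mathfrak{A}_1\oplus\mathfrak{A}_2,S)$. Both routes are sound. Your version is shorter and avoids the explicit Fourier-coefficient computation entirely, at the cost of leaning on the continuity of the action for arbitrary elements; the paper's version makes the degree structure of the algebra explicit (which it then reuses in the subsequent structural analysis of $\mathfrak{J}_{(n)}$ and $\mathfrak{A}_{i,(n)}$), so the extra computation there is not wasted.
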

\begin{proof}
Notice for all $T \in C^*(\mathfrak{A}_1 \oplus \mathfrak{A}_2, S)$ that
\begin{eqnarray} 
\Sigma_n(T)\!\!\!\!&=&\!\!\!\! \frac{1}{2\pi} \int^{2\pi}_0\left(\sum^n_{j=0} \left( 1- \frac{j}{n+1}\right) (S^*)^j \alpha_\theta(S^j T) + \sum^n_{j=1}\left( 1- \frac{j}{n+1}\right)  \alpha_\theta(T(S^*)^j)S^j \right) d\theta\nonumber\\
&=&\!\!\!\! \frac{1}{2\pi} \int^{2\pi}_0\left(\sum^n_{j=0} \left( 1- \frac{j}{n+1}\right) e^{j\theta\sqrt{-1}}(S^*)^jS^j \alpha_\theta(T) + \sum^n_{j=1}\left( 1- \frac{j}{n+1}\right) e^{-j\theta\sqrt{-1}} \alpha_\theta(T) (S^*)^jS^j \right) d\theta \nonumber\\
&=&\!\!\!\! \frac{1}{2\pi} \int^{2\pi}_0\left(\sum^n_{j=-n} \left( 1- \frac{|j|}{n+1}\right) e^{j\theta\sqrt{-1}}\right)\alpha_\theta(T) d\theta \nonumber\\
&=&\!\!\!\! \frac{1}{2\pi} \int^{2\pi}_0\sigma_n(\theta)\alpha_\theta(T) d\theta \nonumber
\end{eqnarray}
where $\sigma_n(\theta) := \sum^n_{j=-n} \left( 1- \frac{|j|}{n+1}\right) e^{j\theta\sqrt{-1}}$ is Fej\'{e}r's kernel.  Recall $\left\{\frac{1}{2\pi} \sigma_n(\theta) d\theta\right\}_{n\geq 1}$ define probability measures on $\mathbb{T}$ that converge weak$^*$ (from $C(\mathbb{T})$) to the point mass at 0.  Thus
\[
\left\|\Sigma_n(T)\right\| \leq \frac{1}{2\pi}\int^{2\pi}_0 \sigma_n(\theta) \left\|\alpha_\theta(T)\right\|d\theta = \left\|T\right\|
\]
for all $n \in \mathbb{N}$.  Since $\mathcal{E}$ and thus $\Sigma_n$ is linear for all $n \in \mathbb{N}$ and each $\Sigma_n$ is a contraction, it suffices to prove the result on a set whose span is dense; namely $\{A_1SA_2S \cdots A_nSBS^*C_1S^*\cdots S^*B_m \, \mid \, n,m\geq 0, A_i, B, C_j \in \mathfrak{A}_1 \oplus \mathfrak{A}_2\}$ by Lemma \ref{S*}.
\par
To complete the proof, notice if $T = A_1SA_2S \cdots A_nSBS^*C_1S^*\cdots S^*B_m$ where $n,m\geq 0$ and $A_i, B, C_j \in \mathfrak{A}_1 \oplus \mathfrak{A}_2$ then
\[
\mathcal{E}(S^kT) = \left\{
\begin{array}{ll}
S^kT & \mbox{if } k+n = m \\
0 & \mbox{otherwise } 
\end{array} 
\right.
\]
and
\[
\mathcal{E}(T(S^*)^k) = \left\{
\begin{array}{ll}
T(S^*)^k & \mbox{if } n = m+k \\
0 & \mbox{otherwise } 
\end{array} 
\right.
\]
Whence $\Sigma_k(T) = \left(1 - \frac{|n-m|}{k+1}\right) T$ for all $k \geq |n-m|$ which clearly converges to $T$ as $k \to \infty$.  
\end{proof}
\begin{disc}
\label{disc2.2.7}
Let $\pi' : C^*(\mathfrak{A}_1 \oplus \mathfrak{A}_2, S) \to C^*((\mathfrak{A}_1/\mathfrak{J}) \oplus \mathfrak{A}_2, S_0)$ be the compression map under consideration in Theorem \ref{dse}.  To prove Theorem \ref{dse} it suffices to prove $ker(\pi') \subseteq \langle \mathfrak{J} \rangle_{C^*(\mathfrak{A}_1 \oplus \mathfrak{A}_2, S)}$.  Notice each element of $\mathfrak{J}$ is fixed by each $\alpha_\theta$.  Thus $\mathcal{E}$ maps the algebraic ideal generated by $\mathfrak{J}$ in $Alg(\mathfrak{A}, S, S^*)$ into $\langle \mathfrak{J}\rangle_{C^*(\mathfrak{A}_1 \oplus \mathfrak{A}_2, S)}$ and thus $\mathcal{E}\left(\langle \mathfrak{J}\rangle_{C^*(\mathfrak{A}_1 \oplus \mathfrak{A}_2, S)}\right) \subseteq \langle \mathfrak{J}\rangle_{C^*(\mathfrak{A}_1 \oplus \mathfrak{A}_2, S)}$.  
\par
Recall from Construction \ref{directsumconstruction} that $T \in C^*(\mathfrak{A}_1 \oplus \mathfrak{A}_2, S) \cap ker(\pi')$ if and only if $T$ acts as the zero operator on $\mathcal{K}_0 \subseteq \mathcal{K}$.  As $\mathcal{K}_0 \subseteq \mathcal{K}$ is an invariant subspace for each $U_\theta$, we see $T \in ker(\pi')$ if and only if $\alpha_\theta(T) \in ker(\pi')$ for all $\theta \in [0,2\pi)$.  Hence $\mathcal{E}(ker(\pi')) \subseteq ker(\pi')$.  Moreover we notice $\mathcal{E}\left(\langle \mathfrak{J}\rangle_{C^*(\mathfrak{A}_1 \oplus \mathfrak{A}_2, S)}\right) \subseteq \mathcal{E}(ker(\pi'))$.  The conclusion of the proof will be obtain by showing $\mathcal{E}\left(\langle \mathfrak{J}\rangle_{C^*(\mathfrak{A}_1 \oplus \mathfrak{A}_2, S)}\right) = \mathcal{E}(ker(\pi'))$ due to the following lemma.
\end{disc}
\begin{lem}
\label{lemma2.2.8}
If $\mathcal{E}\left(\langle \mathfrak{J}\rangle_{C^*(\mathfrak{A}_1 \oplus \mathfrak{A}_2, S)}\right) = \mathcal{E}(ker(\pi'))$ then $ker(\pi') = \langle \mathfrak{J}\rangle_{C^*(\mathfrak{A}_1 \oplus \mathfrak{A}_2, S)}$.
\end{lem}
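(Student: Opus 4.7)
The plan is to combine the Fej\'er-type approximation from Lemma \ref{fourier} with the fact that $ker(\pi')$ and $\langle \mathfrak{J}\rangle_{C^*(\mathfrak{A}_1 \oplus \mathfrak{A}_2, S)}$ are both closed two-sided ideals of $C^*(\mathfrak{A}_1 \oplus \mathfrak{A}_2, S)$. Since the reverse inclusion $\langle \mathfrak{J}\rangle_{C^*(\mathfrak{A}_1 \oplus \mathfrak{A}_2, S)} \subseteq ker(\pi')$ was already noted in Construction \ref{directsumconstruction}, it suffices to show that every $T \in ker(\pi')$ lies in $\langle \mathfrak{J}\rangle_{C^*(\mathfrak{A}_1 \oplus \mathfrak{A}_2, S)}$.

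Fix $T \in ker(\pi')$. First I would observe that, because $ker(\pi')$ is a two-sided ideal, $S^jT$ and $T(S^*)^j$ both lie in $ker(\pi')$ for every $j\geq 0$. Applying $\mathcal{E}$ and using the hypothesis $\mathcal{E}(ker(\pi')) = \mathcal{E}\left(\langle \mathfrak{J}\rangle_{C^*(\mathfrak{A}_1 \oplus \mathfrak{A}_2, S)}\right)$ together with the inclusion $\mathcal{E}\left(\langle \mathfrak{J}\rangle_{C^*(\mathfrak{A}_1 \oplus \mathfrak{A}_2, S)}\right) \subseteq \langle \mathfrak{J}\rangle_{C^*(\mathfrak{A}_1 \oplus \mathfrak{A}_2, S)}$ noted in Discussion \ref{disc2.2.7}, we obtain $\mathcal{E}(S^jT), \mathcal{E}(T(S^*)^j) \in \langle \mathfrak{J}\rangle_{C^*(\mathfrak{A}_1 \oplus \mathfrak{A}_2, S)}$ for all $j$.

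The next step is to invoke Lemma \ref{fourier} to write $T = \lim_{n\to \infty} \Sigma_n(T)$, where
\[
\Sigma_n(T) = \sum^n_{j=0} \left( 1- \tfrac{j}{n+1}\right) (S^*)^j \mathcal{E}(S^j T) + \sum^n_{j=1}\left( 1- \tfrac{j}{n+1}\right)  \mathcal{E}(T(S^*)^j)S^j.
\]
Each summand is a product of the element $\mathcal{E}(S^jT)$ or $\mathcal{E}(T(S^*)^j)$, which by the previous step lies in $\langle \mathfrak{J}\rangle_{C^*(\mathfrak{A}_1 \oplus \mathfrak{A}_2, S)}$, with an element of $C^*(\mathfrak{A}_1 \oplus \mathfrak{A}_2, S)$. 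Since $\langle \mathfrak{J}\rangle_{C^*(\mathfrak{A}_1 \oplus \mathfrak{A}_2, S)}$ is a two-sided ideal, each $\Sigma_n(T)$ lies in this ideal; as the ideal is closed, passing to the limit yields $T \in \langle \mathfrak{J}\rangle_{C^*(\mathfrak{A}_1 \oplus \mathfrak{A}_2, S)}$, completing the argument.

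There is no real obstacle here: the entire content has been front-loaded into the construction of $\mathcal{E}$, the Fej\'er-type convergence of $\Sigma_n$, and the hypothesis on $\mathcal{E}(ker(\pi'))$. The lemma is essentially a packaging statement that reduces the proof of Theorem \ref{dse} to establishing equality of the images of $\mathcal{E}$ on the two ideals, which is the genuine work to follow.
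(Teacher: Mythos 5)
Your proof is correct and follows essentially the same route as the paper: both use the Fej\'er-type convergence $T = \lim_n \Sigma_n(T)$ from Lemma \ref{fourier}, note that $S^jT, T(S^*)^j \in ker(\pi')$, push these through $\mathcal{E}$ via the hypothesis and the inclusion $\mathcal{E}\left(\langle \mathfrak{J}\rangle_{C^*(\mathfrak{A}_1 \oplus \mathfrak{A}_2, S)}\right) \subseteq \langle \mathfrak{J}\rangle_{C^*(\mathfrak{A}_1 \oplus \mathfrak{A}_2, S)}$, and conclude by the ideal property and closedness. The only difference is that you spell out explicitly why each $\Sigma_n(T)$ lands in the ideal, which the paper leaves implicit.
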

\begin{proof}
By Construction \ref{directsumconstruction} it suffices to show that $ker(\pi') \subseteq \langle \mathfrak{J}\rangle_{C^*(\mathfrak{A}_1 \oplus \mathfrak{A}_2, S)}$.  Let $T \in ker(\pi')$.  Recall $T = \lim_{n\to\infty} \Sigma_n(T)$ by Lemma \ref{fourier}. Moreover $S^kT\in ker(\pi')$ and $T(S^*)^k \in ker(\pi')$ for all $k \geq 0$ as $T \in ker(\pi')$.  Whence $\mathcal{E}(S^kT), \mathcal{E}(T(S^*)^k) \in \mathcal{E}(ker(\pi')) = \mathcal{E}\left(\langle \mathfrak{J}\rangle_{C^*(\mathfrak{A}_1 \oplus \mathfrak{A}_2, S)}\right) \subseteq \langle \mathfrak{J}\rangle_{C^*(\mathfrak{A}_1 \oplus \mathfrak{A}_2, S)}$ for all $k\geq 0$.  This implies that $\Sigma_n(T) \in \langle \mathfrak{J}\rangle_{C^*(\mathfrak{A}_1 \oplus \mathfrak{A}_2, S)}$ for all $n$ and thus $T \in \langle \mathfrak{J}\rangle_{C^*(\mathfrak{A}_1 \oplus \mathfrak{A}_2, S)}$. 
\end{proof}
\begin{disc}
\label{directideal}
To begin the process of showing $\mathcal{E}\left(\langle \mathfrak{J}\rangle_{C^*(\mathfrak{A}_1 \oplus \mathfrak{A}_2, S)}\right) = \mathcal{E}(ker(\pi'))$ we will examine the structure of $\langle \mathfrak{J}\rangle_{C^*(\mathfrak{A}_1 \oplus \mathfrak{A}_2, S)}$.  Notice if $J \in \mathfrak{J}$ then $JS = 0$ as $J\xi_1 = 0$.  Whence $S^*J = 0$ for all $J \in \mathfrak{J}$.  Therefore, using the properties that the algebraic ideal generated by $\mathfrak{J}$ in $Alg(\mathfrak{A}_1, \mathfrak{A}_2, S, S^*)$ is dense in $\langle \mathfrak{J}\rangle_{C^*(\mathfrak{A}_1 \oplus \mathfrak{A}_2, S)}$ and the results and ideas from Lemma \ref{S*}, the span of
\[
\left\{(A_nS)(A_{n-1}S) \cdots (A_1S)J(S^*B_1)(S^*B_2)\cdots (S^*B_m) \, \left| \, 
\begin{array}{l}
n,m\geq 0, J \in \mathfrak{J},\\
A_i,B_j \in \mathfrak{A}_1 \mbox{ if }i,j=0 \mod 2, \\
A_i,B_j \in \mathfrak{A}_2 \mbox{ if }i,j=1 \mod 2
\end{array} 
\right. \right\}
\]
is dense in $\langle \mathfrak{J}\rangle_{C^*(\mathfrak{A}_1 \oplus \mathfrak{A}_2, S)}$.
\end{disc}
\begin{nota}
For each $n \geq 0$ let
\[
\mathfrak{J}_{(n)} := span\left\{(A_nS)(A_{n-1}S) \cdots (A_1S)J(S^*B_1)(S^*B_2)\cdots (S^*B_n) \, \left| \, 
\begin{array}{l}
J \in \mathfrak{J},\\
A_i,B_j \in \mathfrak{A}_1 \mbox{ if }i,j=0 \mod 2, \\
A_i,B_j \in \mathfrak{A}_2 \mbox{ if }i,j=1 \mod 2
\end{array} 
\right. \right\}
\]
and
\[
\mathfrak{A}_{1,(n)} := span\left\{(A_nS)(A_{n-1}S) \cdots (A_1S)A(S^*B_1)(S^*B_2)\cdots (S^*B_n) \, \left| \, 
\begin{array}{l}
A \in \mathfrak{A}_1,\\
A_i,B_j \in \mathfrak{A}_1 \mbox{ if }i,j=0 \mod 2, \\
A_i,B_j \in \mathfrak{A}_2 \mbox{ if }i,j=1 \mod 2
\end{array} 
\right. \right\}
\]
and
\[
\mathfrak{A}_{2,(n)} := span\left\{(A_nS)(A_{n-1}S) \cdots (A_1S)A(S^*B_1)(S^*B_2)\cdots (S^*B_n) \, \left| \, 
\begin{array}{l}
A \in \mathfrak{A}_2,\\
A_i,B_j \in \mathfrak{A}_2 \mbox{ if }i,j=0 \mod 2, \\
A_i,B_j \in \mathfrak{A}_1 \mbox{ if }i,j=1 \mod 2
\end{array} 
\right. \right\}.
\]
\end{nota}
\begin{lem}
\label{lemma2.2.9}
The span of $\bigcup_{n\geq 0} \mathfrak{J}_{(n)}$ is dense in $\mathcal{E}\left(\langle \mathfrak{J}\rangle_{C^*(\mathfrak{A}_1 \oplus \mathfrak{A}_2, S)}\right)$ and the span of $\bigcup_{n\geq 0} (\mathfrak{A}_{1,(n)} \cup \mathfrak{A}_{2,(n)})$ is dense in $\mathcal{E}(C^*(\mathfrak{A}_1 \oplus \mathfrak{A}_2, S))$.
\end{lem}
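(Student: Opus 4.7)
The plan is to reduce this to the dense spanning sets already identified plus a direct computation of $\mathcal{E}$ on those spanning elements. Recall that $\mathcal{E}$ is a contractive linear map on $C^*(\mathfrak{A}_1 \oplus \mathfrak{A}_2, S)$, so if $D \subseteq C^*(\mathfrak{A}_1 \oplus \mathfrak{A}_2, S)$ has dense linear span in some closed subspace $X$, then $\mathcal{E}(\mathrm{span}(D))$ is dense in $\mathcal{E}(X)$: for any $T \in X$ we can pick $d_k \in \mathrm{span}(D)$ with $d_k \to T$, and then $\mathcal{E}(d_k) \to \mathcal{E}(T)$ by continuity.

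For the first claim I would take $D$ to be the spanning family identified in Discussion \ref{directideal}, namely the elements
\[
T_0 = (A_nS)(A_{n-1}S) \cdots (A_1S)J(S^*B_1)(S^*B_2)\cdots (S^*B_m)
\]
with the parity constraints on $A_i, B_j$, which is dense in $\langle \mathfrak{J}\rangle_{C^*(\mathfrak{A}_1 \oplus \mathfrak{A}_2, S)}$. Using $\alpha_\theta(S) = e^{\theta\sqrt{-1}}S$, $\alpha_\theta(S^*) = e^{-\theta\sqrt{-1}}S^*$, and the fact that $\alpha_\theta$ fixes every element of $\mathfrak{A}_1 \oplus \mathfrak{A}_2$ (and hence $J \in \mathfrak{J}$), one obtains $\alpha_\theta(T_0) = e^{(n-m)\theta\sqrt{-1}} T_0$. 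Integrating against the normalized Lebesgue measure on $[0, 2\pi)$ yields
\[
\mathcal{E}(T_0) = \begin{cases} T_0 & \text{if } n = m, \\ 0 & \text{otherwise.} \end{cases}
\]
Therefore $\mathcal{E}(\mathrm{span}(D)) = \mathrm{span}\bigcup_{n \geq 0} \mathfrak{J}_{(n)}$, and by the continuity argument above this span is dense in $\mathcal{E}(\langle \mathfrak{J}\rangle_{C^*(\mathfrak{A}_1 \oplus \mathfrak{A}_2, S)})$.

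For the second claim, run the identical argument using instead the dense spanning family for $C^*(\mathfrak{A}_1 \oplus \mathfrak{A}_2, S)$ from Lemma \ref{S*}: elements
\[
T_1 = (A_1S)(A_2S)\cdots (A_nS)A_{n+1}(S^*A_{n+2})\cdots(S^*A_{n+m+1}),
\]
where the indices satisfy the alternation condition stated there. Again $\alpha_\theta(T_1) = e^{(n-m)\theta\sqrt{-1}} T_1$, so $\mathcal{E}(T_1)$ vanishes unless $n = m$. When $n = m$, the surviving element has exactly $n$ copies of $S$ on the left, a single middle element $A_{n+1}$, and $n$ copies of $S^*$ on the right, with the alternation pattern forced (any non-alternating generator is already zero by the $ASB = 0$ and $AS^*B = 0$ identities in Lemma \ref{S*}). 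Such an element lies in $\mathfrak{A}_{1,(n)}$ when $A_{n+1} \in \mathfrak{A}_1$ and in $\mathfrak{A}_{2,(n)}$ when $A_{n+1} \in \mathfrak{A}_2$, and conversely every element of those spaces arises in this way.

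There is no serious obstacle; the only mild bookkeeping step is matching the index conventions used in Lemma \ref{S*}, Discussion \ref{directideal}, and the definitions of $\mathfrak{J}_{(n)}$, $\mathfrak{A}_{1,(n)}$, $\mathfrak{A}_{2,(n)}$ to confirm that the parity constraints on $A_i, B_j$ agree once one requires the alternation forced by Lemma \ref{S*}. After that check, the computation of $\mathcal{E}$ on monomials together with the continuity of $\mathcal{E}$ gives both density statements at once.
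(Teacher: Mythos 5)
Your argument is correct and follows essentially the same route as the paper: approximate an element of the image of $\mathcal{E}$ by $\mathcal{E}$ applied to elements of the known dense spanning set (using contractivity/continuity of $\mathcal{E}$), then compute $\mathcal{E}$ on monomials via $\alpha_\theta(S)=e^{\theta\sqrt{-1}}S$ to see that only the balanced words with $n=m$ survive, which are exactly the generators of $\mathfrak{J}_{(n)}$ (resp.\ $\mathfrak{A}_{1,(n)}\cup\mathfrak{A}_{2,(n)}$). The paper phrases the first step slightly differently (using $\mathcal{E}(T)=T$ for $T$ in the image rather than writing $T'=\mathcal{E}(T)$ and pushing the approximation through), but the content is identical.
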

\begin{proof}
We will only prove the first claim as the second follows verbatim with the aid of Lemma \ref{S*}.  It is clear $\bigcup_{n\geq 0} \mathfrak{J}_{(n)} \subseteq \mathcal{E}\left(\langle \mathfrak{J}\rangle_{C^*(\mathfrak{A}_1 \oplus \mathfrak{A}_2, S)}\right)$.
\par
Let $T \in \mathcal{E}\left(\langle \mathfrak{J}\rangle_{C^*(\mathfrak{A}_1 \oplus \mathfrak{A}_2, S)}\right)$ and let $\epsilon > 0$.  As $T \in \langle \mathfrak{J}\rangle_{C^*(\mathfrak{A}_1 \oplus \mathfrak{A}_2, S)}$ there exists an 
\[
R \in span\left\{(A_nS)(A_{n-1}S) \cdots (A_1S)J(S^*B_1)(S^*B_2)\cdots (S^*B_m) \, \left| \, 
\begin{array}{l}
n,m\geq 0, J \in \mathfrak{J},\\
A_i,B_j \in \mathfrak{A}_1 \mbox{ if }i,j=0 \mod 2, \\
A_i,B_j \in \mathfrak{A}_2 \mbox{ if }i,j=1 \mod 2
\end{array} 
\right. \right\}
\]
such that $\left\|T-R\right\| < \epsilon$.  Then $\mathcal{E}(T) = T$ since $T \in \mathcal{E}\left(\langle \mathfrak{J}\rangle_{C^*(\mathfrak{A}_1 \oplus \mathfrak{A}_2, S)}\right)$ and thus
\[
\left\|T - \mathcal{E}(R)\right\| = \left\|\mathcal{E}(T - R)\right\| \leq \left\|T - R\right\|  < \epsilon.
\]
Clearly
\begin{eqnarray} 
&&\!\!\!\! \mathcal{E}\left(span\left\{(A_nS)(A_{n-1}S) \cdots (A_1S)J(S^*B_1)(S^*B_2)\cdots (S^*B_m) \, \left| \, 
\begin{array}{l}
n,m\geq 0, J \in \mathfrak{J},\\
A_i,B_j \in \mathfrak{A}_1 \mbox{ if }i,j=0 \mod 2, \\
A_i,B_j \in \mathfrak{A}_2 \mbox{ if }i,j=1 \mod 2
\end{array} 
\right. \right\}\right) \nonumber\\
&=&\!\!\!\! span\left\{(A_nS)(A_{n-1}S) \cdots (A_1S)J(S^*B_1)(S^*B_2)\cdots (S^*B_n) \, \left| \, 
\begin{array}{l}
n\geq 0, J \in \mathfrak{J},\\
A_i,B_j \in \mathfrak{A}_1 \mbox{ if }i,j=0 \mod 2, \\
A_i,B_j \in \mathfrak{A}_2 \mbox{ if }i,j=1 \mod 2
\end{array} 
\right. \right\} \nonumber\\
&=&\!\!\!\! span\left(\bigcup_{n\geq 0} \mathfrak{J}_{(n)}\right). \nonumber
\end{eqnarray}
Thus $\mathcal{E}(R) \in \overline{span\left(\bigcup_{n\geq 0} \mathfrak{J}_{(n)}\right)}$ which completes the claim. 
\end{proof}
\begin{disc}
\label{anjn}
Now we will examine how $\mathfrak{A}_{1,(n)}$, $\mathfrak{A}_{2,(n)}$, and $\mathfrak{J}_{(n)}$ act on $\mathcal{K}$. We will begin with the analysis of $\mathfrak{A}_{1,(n)}$ and $\mathfrak{J}_{(n)}$ as the analysis of $\mathfrak{A}_{2,(n)}$ will be similar. 
\par
Since $\mathcal{E}(T) = T$ for all $T \in \mathfrak{A}_{1,(n)}$ ($T \in \mathfrak{J}_{(n)}$), $T(\mathcal{L}_{i,m}) \subseteq \mathcal{L}_{i,m}$ for all $i \in \{1,2\}$ and $m \in \mathbb{N}$ by Lemma \ref{lemma2.2.4}.  Fix 
\[
T = (A_nS)(A_{n-1}S) \cdots (A_1S)A(S^*B_1)(S^*B_2)\cdots (S^*B_n)
\]
where $A_i,B_j \in \mathfrak{A}_1$ if $i,j=0 \mod 2$, $A_i,B_j \in \mathfrak{A}_2$ if $i,j=1 \mod 2$, and $A \in \mathfrak{A}_1$ ($A \in \mathfrak{J}$).  If $k \in \{1,2\}$ and $k = n \mod 2$ then $T(\mathcal{L}_{k,m}) = \{0\}$ for all $m \in \mathbb{N}$.  Fix $k \in \{1,2\}$ with $k \neq n \mod 2$ and let $\ell \in \{1,2\}\setminus \{k\}$.  Then for all $\eta_1 \otimes \cdots \otimes \eta_{n+1} \in \mathcal{L}_{k,n+1}$ 
\begin{eqnarray} 
&&\!\!\!\! T(\eta_1 \otimes \cdots \otimes \eta_{n+1}) \nonumber\\
&=&\!\!\!\! \langle B_n\eta_1, \xi_k\rangle_{\mathcal{H}_k}((A_nS)(A_{n-1}S) \cdots (A_1S)A(S^*B_1)(S^*B_2)\cdots (S^*B_{n-1}))(\eta_2 \otimes \cdots \otimes \eta_{n+1}) \nonumber\\
&\vdots&\!\!\!\!  \nonumber\\
&=&\!\!\!\! \left(\prod_{
\begin{array}{c}
j \in \{1,\ldots, n\} \\
j \mbox{ even}
\end{array} 
} \langle B_{j} \eta_{n-j+1}, \xi_1\rangle_{\mathcal{H}_1}\right)\left(\prod_{\begin{array}{c}
j \in \{1,\ldots, n\} \\
j \mbox{ odd}
\end{array}} \langle B_j \eta_{n-j+1}, \xi_2\rangle_{\mathcal{H}_2}\right) (A_nS)(A_{n-1}S) \cdots (A_1S)A\eta_{n+1}\nonumber\\
&=&\!\!\!\! \langle \eta_1\otimes \cdots \otimes \eta_n, B_n^*\xi_k \otimes B^*_{n-1} \xi_\ell \otimes \cdots \otimes B_2^*\xi_1 \otimes B_1^*\xi_2 \rangle_{ \mathcal{L}_{k,n}}(A_nS)(A_{n-1}S) \cdots (A_2S)(A_1\xi_2 \otimes A\eta_{n+1})\nonumber\\
&\vdots&\!\!\!\!  \nonumber\\
&=&\!\!\!\! \langle \eta_1\otimes \cdots \otimes \eta_n, B_n^*\xi_k \otimes B^*_{n-1} \xi_\ell \otimes \cdots \otimes B_2^*\xi_1 \otimes B_1^*\xi_2 \rangle_{ \mathcal{L}_{k,n}} (A_n\xi_k \otimes A_{n-1} \xi_\ell \otimes \cdots \otimes A_{2}\xi_1 \otimes A_1\xi_2 \otimes A\eta_{n+1}). \nonumber
\end{eqnarray}
Moreover $T$ acts as the zero operator on elements of $\mathcal{L}_{k,m}$ for all $m \leq n$ since the $m^{\mbox{th}}$ $S^*$ will act on an element of $\mathcal{L}_{1,1} \oplus \mathcal{L}_{2,1}$.  Finally if $m \geq n+1$ notice $\mathcal{L}_{k,m} = \mathcal{L}_{k,n+1} \otimes \mathcal{L}_{k, m-n-1}$ if $n+1$ is even and $\mathcal{L}_{k,m} = \mathcal{L}_{k,n+1} \otimes \mathcal{L}_{\ell, m-n-1}$ if $n +1$ is odd. Therefore, if $R = T|_{\mathcal{L}_{k,n+1}}$, it is easy to see that $T$ acts on $\mathcal{L}_{k,m}$ as $R \otimes I_{\mathcal{L}_{k, m-n-1}}$ when $n+1$ is even and $m \geq n+1$ and $T$ acts on $\mathcal{L}_{k,m}$ as $R \otimes I_{\mathcal{L}_{\ell, m-n-1}}$ when $n+1$ is odd and $m \geq n+1$.
\par
For $i \in \{1,2\}$ let $\mathcal{N}_i = \overline{\mathfrak{A}_i \xi_i}$ which is a Hilbert subspace of $\mathcal{H}_{i,0} \subseteq \mathcal{H}_i$ containing $\xi_i$.  Thus, for a fixed $k \in \{1,2\}$ with $k \neq n \mod 2$ and with $\ell \in \{1,2\}\setminus \{k\}$, by restricting to $\mathcal{L}_{k,n}$ and taking limits of elements of $\mathfrak{A}_{1,(n)}$ ($\mathfrak{J}_{(n)}$) over $A_1, \ldots, A_n, B_1, \ldots, B_n$ where $A_i,B_j \in \mathfrak{A}_1$ if $i,j=0 \mod 2$ and $A_i,B_j \in \mathfrak{A}_2$ if $i,j=1 \mod 2$, every operator in $\mathcal{B}(\mathcal{L}_{k,n})$ of the form
\[
\eta_1 \otimes \cdots \otimes \eta_{n+1} \mapsto \langle \eta_1 \otimes \cdots \otimes \eta_n, \zeta_1 \otimes \cdots \otimes \zeta_n\rangle_{\mathcal{L}_{k,n}} (\zeta'_1 \otimes \cdots \otimes \zeta'_n \otimes A\eta_{n+1})
\]
where $\zeta_i, \zeta'_j \in \mathcal{N}_k$ if $i,j = 1 \mod 2$, $\zeta_i, \zeta'_j \in \mathcal{N}_\ell$ if $i,j = 0 \mod 2$, and $A \in \mathfrak{A}_1$ ($A \in \mathfrak{J}$) may be obtained. 
\par
For completion we will describe the action of $\mathfrak{A}_{2,(n)}$ on $\mathcal{K}$.  Since $\mathcal{E}(T) = T$ for all $T \in \mathfrak{A}_{2,(n)}$, $T(\mathcal{L}_{i,m}) \subseteq \mathcal{L}_{i,m}$ for all $i \in \{1,2\}$ and $m \in \mathbb{N}$ by Lemma \ref{lemma2.2.4}.  Fix 
\[
T = (A_nS)(A_{n-1}S) \cdots (A_1S)A(S^*B_1)(S^*B_2)\cdots (S^*B_n)
\]
where $A_i,B_j \in \mathfrak{A}_2$ if $i,j=0 \mod 2$, $A_i,B_j \in \mathfrak{A}_1$ if $i,j=1 \mod 2$, and $A \in \mathfrak{A}_2$.  If $k \in \{1,2\}$ and $k \neq n \mod 2$ then $T(\mathcal{L}_{k,m}) = \{0\}$ for all $m \in \mathbb{N}$.  Fix $k \in \{1,2\}$ with $k = n \mod 2$ and let $\ell \in \{1,2\}\setminus \{k\}$.  Then for all $\eta_1 \otimes \cdots \otimes \eta_{n+1} \in \mathcal{L}_{k,n+1}$ 
\begin{eqnarray} 
&&\!\!\!\! T(\eta_1 \otimes \cdots \otimes \eta_{n+1}) \nonumber\\
&=&\!\!\!\! \langle \eta_1\otimes \cdots \otimes \eta_n, B_n^*\xi_k \otimes B^*_{n-1} \xi_\ell \otimes \cdots \otimes B_2^*\xi_2 \otimes B_1^*\xi_1 \rangle_{ \mathcal{L}_{k,n}} (A_n\xi_k \otimes A_{n-1} \xi_\ell \otimes \cdots \otimes A_{2}\xi_2 \otimes A_1\xi_1 \otimes A\eta_{n+1}). \nonumber
\end{eqnarray}
Moreover $T$ acts as the zero operator on elements of $\mathcal{L}_{k,m}$ for all $m \leq n$ since the $m^{\mbox{th}}$ $S^*$ will act on an element of $\mathcal{L}_{1,1} \oplus \mathcal{L}_{2,1}$.  Finally, if $R = T|_{\mathcal{L}_{k,n+1}}$ it is easy to see that $T$ acts on $\mathcal{L}_{k,m}$ as $R \otimes I_{\mathcal{L}_{k, m-n-1}}$ when $n+1$ is even and $m \geq n+1$ and $T$ acts on $\mathcal{L}_{k,m}$ as $R \otimes I_{\mathcal{L}_{\ell, m-n-1}}$ when $n+1$ is odd and $m \geq n+1$.
\par
Lastly notice if $T \in \mathfrak{A}_{1,(n)}$, $R \in\mathfrak{A}_{2,(n)}$, $k,\ell \in \{1,2\}$, $k = n\mod 2$, and $\ell \neq n\mod 2$ then the actions of $T$ and $R$ are completely determined by their actions on $\mathcal{L}_{1,n+1} \oplus \mathcal{L}_{2,n+1}$ with $T(\mathcal{L}_{k,m}) = \{0\}$ for all $m\in \mathbb{N}$, $R(\mathcal{L}_{\ell,m}) = \{0\}$ for all $m \in \mathbb{N}$, and
\[
\left\|T + R\right\| = \max\{ \left\|T|_{\mathcal{L}_{\ell, n+1}}\right\|, \left\|R|_{\mathcal{L}_{k, n+1}}\right\| \}.
\]
\par
The above structure will be important as we will consider the restriction of $\mathcal{E}\left(\langle \mathfrak{J}\rangle_{C^*(\mathfrak{A}_1 \oplus \mathfrak{A}_2, S)}\right)$ and $\mathcal{E}(ker(\pi'))$ to the subspaces $\mathcal{L}_{1,n} \oplus \mathcal{L}_{2,n}$ of $\mathcal{K}$.  For $m,n \in \mathbb{N}$ and $i \in \{1,2\}$ let $P_{i,m}$ be the orthogonal projection of $\mathcal{K}$ onto $\mathcal{L}_{i,m}$, let $P_m$ be the orthogonal projection of $\mathcal{K}$ onto $\mathcal{L}_{1,m} \oplus \mathcal{L}_{2,m}$ (so $P_m = P_{1,m} + P_{2,m}$), and let $Q_n = \sum^n_{j=1} P_j$ which is the orthogonal projection of $\mathcal{K}$ onto $\bigoplus^n_{k = 1} (\mathcal{L}_{1,k} \oplus \mathcal{L}_{2,k})$.  Therefore, using the above discussion, $\mathfrak{A}_{1,(n)} P_m = \{0\}$ and $\mathfrak{A}_{2,(n)} P_m = \{0\}$ for all $m \leq n$ and if $m > n+1$ then each element $T \in \mathfrak{A}_{1,(n)} \cup \mathfrak{A}_{2,(n)}$ acts on $\mathcal{L}_{1,m}$ by $(P_{1,n+1}TP_{1,n+1}) \otimes I$ where $I$ is the appropriate identity and acts on $\mathcal{L}_{2,m}$ by $(P_{2,n+1}TP_{2,n+1}) \otimes I$ where $I$ is the appropriate identity. Using the $P_n$'s and the above information about the actions of $\mathfrak{A}_{1,(n)}$, $\mathfrak{A}_{2,(n)}$, and $\mathfrak{J}_{(n)}$ on $\mathcal{L}_{1,n} \oplus \mathcal{L}_{2,n}$, we obtain the following.
\end{disc}
\begin{lem}
\label{complemma}
The set $Q_n \left(span \left( \bigcup_{m < n} \mathfrak{J}_{(m)} \right)\right) Q_n$ is dense in $Q_n \mathcal{E}(\ker(\pi')) Q_n$ for all $n \in \mathbb{N}$.
\end{lem}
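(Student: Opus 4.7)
The plan is to combine Lemma \ref{lemma2.2.9} with an iterative replacement argument that converts each summand in an approximation of $T$ into a member of the ideal $\mathfrak{J}_{(m)}$, using nuclearity of the compact operators to control the error. Fix $T \in \mathcal{E}(\ker(\pi'))$ and $\epsilon > 0$. Lemma \ref{lemma2.2.9} supplies a finite sum $R = \sum_{m < n} R_m$ with each $R_m \in \text{span}(\mathfrak{A}_{1,(m)} \cup \mathfrak{A}_{2,(m)})$ satisfying $\|Q_n T Q_n - Q_n R Q_n\| < \delta_0$ for an initial error $\delta_0$ to be chosen later; terms with $m \geq n$ may be dropped since they annihilate the range of $Q_n$ by Discussion \ref{anjn}. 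Because $\pi'$ commutes with $Q_n$ and $\pi'(T) = 0$, we also have $\|Q_n \pi'(R) Q_n\| < \delta_0$.

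Inductively I would construct $J_m \in \mathfrak{J}_{(m)}$ for $m = 0, 1, \ldots, n-1$. Suppose $R^{(m)} := J_0 + \cdots + J_{m-1} + R_m + \cdots + R_{n-1}$ has been constructed so that $\|Q_n T Q_n - Q_n R^{(m)} Q_n\| < \delta_m$. By Discussion \ref{anjn}, $R_{m'}|_{\mathcal{L}_{k, m+1}} = 0$ for $m' > m$, and each $J_{m'}$ with $m' < m$ lies in $\ker(\pi')$, so $\pi'(R^{(m)})|_{\mathcal{L}_{k, m+1, 0}} = \pi'(R_m)|_{\mathcal{L}_{k, m+1, 0}}$; its norm is bounded by $\|Q_n \pi'(R^{(m)}) Q_n\| < \delta_m$.

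Still by Discussion \ref{anjn}, $R_m|_{\mathcal{L}_{k,m+1}}$ lies in $\mathcal{F}(\mathcal{N}_{k,m}) \odot \pi_i(\mathfrak{A}_i)$, where $\mathcal{N}_{k,m}$ is the tensor product of cyclic subspaces $\overline{\mathfrak{A}_j \xi_j}$ for the first $m$ slots, $\mathcal{F}(\mathcal{N}_{k,m})$ denotes the finite-rank operators supported on $\mathcal{N}_{k,m}$, and $i \in \{1,2\}$ is the parity index of the $(m+1)$-st slot; the corresponding restriction of $\mathfrak{J}_{(m)}$ is $\mathcal{F}(\mathcal{N}_{k,m}) \odot \pi_i(\mathfrak{J}_i)$ with $\mathfrak{J}_1 := \mathfrak{J}$ and $\mathfrak{J}_2 := \{0\}$. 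Since $\mathcal{K}(\mathcal{N}_{k,m})$ is nuclear and hence exact, the short sequence
\[
0 \to \mathcal{K}(\mathcal{N}_{k,m}) \otimes_{\min} \pi_i(\mathfrak{J}_i) \to \mathcal{K}(\mathcal{N}_{k,m}) \otimes_{\min} \pi_i(\mathfrak{A}_i) \to \mathcal{K}(\mathcal{N}_{k,m}) \otimes_{\min} \pi_{i,0}(\mathfrak{A}_i/\mathfrak{J}_i) \to 0
\]
is exact, and the compression $\pi'$ realizes the rightmost quotient on this subspace of $\mathcal{B}(\mathcal{L}_{k,m+1})$. Consequently the distance from $R_m|_{\mathcal{L}_{k,m+1}}$ to the closed ideal equals $\|\pi'(R_m)|_{\mathcal{L}_{k,m+1,0}}\| < \delta_m$. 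Picking a finite approximant $J_m \in \mathfrak{J}_{(m)}$ within $\eta_m$ of the best one (using density of the algebraic tensor in the minimal tensor product, and lifting back to a sum of generators via the rank-one decomposition in Discussion \ref{anjn}), we get $\|R_m - J_m\| < \delta_m + \eta_m$, where the global operator norm coincides with the norm on $\mathcal{L}_{k,m+1}$ since higher levels are obtained by tensoring with the identity. Both parities $k$ are handled simultaneously; the $\mathfrak{A}_{2,(m)}$ part of $R_m$ is replaced by $0$ directly, since $\mathfrak{J}_2 = \{0\}$. Setting $R^{(m+1)} := R^{(m)} - R_m + J_m$ yields the recursion $\delta_{m+1} \leq 2\delta_m + \eta_m$, and choosing $\delta_0$ and the $\eta_m$ small enough produces $\delta_n < \epsilon$. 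The element $R^{(n)} = J_0 + \cdots + J_{n-1}$ lies in $\text{span}(\bigcup_{m<n} \mathfrak{J}_{(m)})$ and gives the desired approximation of $Q_n T Q_n$.

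The main obstacle is the identification of $R_m|_{\mathcal{L}_{k,m+1}}$ as an element of the tensor product $\mathcal{K}(\mathcal{N}_{k,m}) \otimes_{\min} \pi_i(\mathfrak{A}_i)$, and verifying that $\pi'$ really implements the second-factor quotient $\pi_i(\mathfrak{A}_i) \to \pi_{i,0}(\mathfrak{A}_i/\mathfrak{J}_i)$ on this subspace; once that is done, nuclearity of $\mathcal{K}$ supplies the isometric lifting that closes the induction.
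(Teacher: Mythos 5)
Your argument is correct and is essentially the proof in the paper: the paper also proceeds level by level (organized as an induction on $n$), identifies the level-$m$ compression as a finite sum of rank-one operators on $\mathcal{N}_{i_1}\otimes\cdots\otimes\mathcal{N}_{i_m}$ tensored with elements of $\mathfrak{A}_1$ (via Gram--Schmidt), disposes of the $\mathfrak{A}_{2,(m)}$ part by noting it is supported inside $\mathcal{K}_0$ where everything else vanishes, and lifts the $\mathfrak{A}_{1,(m)}$ part through the isomorphism $\mathcal{M}_p(\mathfrak{A}_1)/\mathcal{M}_p(\mathfrak{J})\simeq\mathcal{M}_p(\mathfrak{A}_1/\mathfrak{J})$ realized faithfully on $\mathcal{H}_{1,0}^{\oplus p}$. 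Your appeal to exactness of $\mathcal{K}(\mathcal{N}_{k,m})$ is exactly this matrix-amplification step in disguise (the operators involved are finite rank, so only the $\mathcal{M}_p$ case is needed), and your iterative replacement with the error recursion $\delta_{m+1}\leq 2\delta_m+\eta_m$ is an acceptable reorganization of the paper's induction.
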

\begin{proof}
Clearly $Q_n \left(span \left( \bigcup_{m < n} \mathfrak{J}_{(m)} \right)\right) Q_n \subseteq Q_n \mathcal{E}(\ker(\pi')) Q_n$ for all $n \in \mathbb{N}$.  Thus it suffices to show that each element of $\mathcal{E}(\ker(\pi'))$ can be approximated uniformly on $Q_m \mathcal{K}$ by an element of $span \left( \bigcup_{m < n} \mathfrak{J}_{(m)} \right)$.  We proceed by induction on $n$.
\par
Let $T \in \mathcal{E}(ker(\pi'))$ and let $\epsilon > 0$.  As $T \in \mathcal{E}(C^*(\mathfrak{A}_1 \oplus \mathfrak{A}_2, S))$ Lemma \ref{lemma2.2.9} implies that there exists an $m \in \mathbb{N}$, $T_{1,j} \in \mathfrak{A}_{1,(j)}$, and $T_{2,j} \in \mathfrak{A}_{2,(j)}$ such that $\left\|T - \sum^2_{i=1}\sum^m_{j=0} T_{i,j}\right\| < \epsilon$.  Therefore
\[
\left\|P_1TP_1 - P_1T_{1,0}P_1 - P_1T_{2,0}P_1\right\| = \left\|P_1\left(T - \sum^2_{i=1}\sum^m_{j=0} T_{i,j}\right)P_1\right\| < \epsilon
\]
as $\mathfrak{A}_{i,(j)} P_1 = \{0\}$ for all $j \geq 1$ and $i \in \{1,2\}$.  Note $T_{1,0} \in \mathfrak{A}_{1,(0)} = \mathfrak{A}_1$ and $T_{2,0} \in \mathfrak{A}_{2,(0)} = \mathfrak{A}_2$.  However $P_1TP_1(\mathcal{H}_{1,0}) = \{0\}$ as $T \in ker(\pi')$ and $P_1T_{2,0}P_1(\mathcal{H}_{1,0}) = \{0\}$ as $T_{2,0} \in \mathfrak{A}_2$.  Whence 
\[
\left\|T_{1,0} h\right\| = \left\|P_1TP_1h - P_1T_{1,0}P_1h - P_1T_{2,0}P_1h\right\| \leq \epsilon \left\|h\right\|
\]
for all $h \in \mathcal{H}_{1,0}$.  Since $\mathfrak{A}_1$ act on $\mathcal{H}_{1,0}$ via $\pi_{1,0} \circ q$, $\left\|\pi_{1,0}(q(T_0))\right\| < \epsilon$.  Thus $\left\|q(T_0)\right\|_{\mathfrak{A}_1/\mathfrak{J}} < \epsilon$ as  $\pi_{1,0}$ is a faithful representation of $\mathfrak{A}_1/\mathfrak{J}$.  Hence there exists a $J \in \mathfrak{J}$ such that $\left\|T_{1,0} - J\right\| < \epsilon$.  Similarly $P_1TP_1(\mathcal{H}_2) = \{0\}$ as $T \in ker(\pi')$ and $T_{1,0}(\mathcal{H}_2) = \{0\}$ as $T_{1,0} \in \mathfrak{A}_1$.  Hence, as $\pi_2$ was a faithful representation, $\left\|T_{2,0}\right\| < \epsilon$.  Thus $J \in \mathfrak{J}_{(0)}$ and 
\[
\left\|P_1 T P_1 - P_1 J P_1\right\| \leq \left\| P_1TP_1 - P_1T_{1,0}P_1 - P_1T_{2,0}P_1\right\| + \left\|T_{2,0}\right\| + \left\|T_{1,0} - J\right\| < 3\epsilon
\]
as desired.
\par
Suppose the result is true for some $n \geq 1$.  Let $T \in \mathcal{E}(ker(\pi'))$ and let $\epsilon > 0$.  As $T \in \mathcal{E}(C^*(\mathfrak{A}_1 \oplus \mathfrak{A}_2, S))$ Lemma \ref{lemma2.2.9} implies that there exists an $m \in \mathbb{N}$, $T_{1,j} \in \mathfrak{A}_{1,(j)}$, and $T_{2,j} \in \mathfrak{A}_{1,(j)}$ such that $\left\|T - \sum^2_{i=1}\sum^m_{j=0} T_{i,j}\right\| < \epsilon$.  Therefore
\[
\left\|Q_nTQ_n - Q_n\left(\sum^2_{i=1}\sum^{n-1}_{j=0} T_{i,j}\right)Q_n\right\| = \left\|Q_n\left(T - \sum^2_{i=1}\sum^m_{j=0} T_{i,j}\right)Q_n\right\| < \epsilon
\]
as $\mathfrak{A}_{i,(j)} P_k = \{0\}$ for all $j \geq k$ and all $i \in \{1,2\}$.  By the inductive hypothesis there exists an $R \in span \left( \bigcup_{m < n} \mathfrak{J}_{(m)} \right)$ such that
\[
\left\|Q_nTQ_n - Q_nRQ_n\right\| < \epsilon
\]
and thus
\[
\left\|Q_nRQ_n - Q_n\left(\sum^2_{i=1}\sum^{n-1}_{j=0} T_{i,j}\right)Q_n\right\|  < 2\epsilon.
\]
However, as $R \in span \left( \bigcup_{m < n} \mathfrak{J}_{(m)} \right)$ and $\sum^2_{i=1}\sum^{n-1}_{j=0} T_{i,j} \in span \left( \bigcup_{m < n} (\mathfrak{A}_{1,(m)} \cup \mathfrak{A}_{2,(m)}) \right)$, 
\[
\left\|P_{n+1}RP_{n+1} - P_{n+1}\left(\sum^2_{i=1}\sum^{n-1}_{j=0} T_{i,j}\right)P_{n+1}\right\| = \left\|P_{n}RP_{n} - P_{n}\left(\sum^2_{i=1}\sum^{n-1}_{j=0} T_{i,j}\right)P_{n}\right\| < 2\epsilon
\]
by Discussion \ref{anjn}.  Therefore, by considering direct sums,
\[
\left\|Q_{n+1}RQ_{n+1} - Q_{n+1}\left(\sum^2_{i=1}\sum^{n-1}_{j=0} T_{i,j}\right)Q_{n+1}\right\|  < 2\epsilon.
\]
Hence
\[
\left\|Q_{n+1}TQ_{n+1} - Q_{n+1}RQ_{n+1}- Q_{n+1}T_{1,n}Q_{n+1} - Q_{n+1}T_{2,n}Q_{n+1}\right\| < 3\epsilon\,\,\,\,\,\,\,\,\,\,\,(*).
\]
Thus it suffices to approximate $T_{1,n} + T_{2,n} \in \mathfrak{A}_{1,(n)} + \mathfrak{A}_{2,(n)}$ uniformly on $Q_{n+1}\mathcal{K}$ with an element of $\mathfrak{J}_{(n)}$.  Since elements of $\mathfrak{A}_{1,(n)}$, $\mathfrak{A}_{2,(n)}$ and $\mathfrak{J}_{(n)}$ are zero when restricted to $Q_{n}\mathcal{K}$, it suffices to perform the approximation on $\mathcal{L}_{1,n+1} \oplus \mathcal{L}_{2,n+1}$.
\par
First we claim that $\left\|T_{2,n}\right\| < 3\epsilon$.  Fix $k \in \{1,2\}$ with $k = n \mod 2$ and let $\ell \in \{1,2\}\setminus \{k\}$.  By the description of the action of $T_{2,n}$ on $\mathcal{K}$ it suffices to show that $\left\|P_{k,n+1}T_{2,n}|_{\mathcal{L}_{k,n+1}}\right\| < 3\epsilon$.  However, by the description of the action of $T_{2,n}$ on $\mathcal{K}$ as given in Discussion \ref{anjn}, $P_{k,n+1}T_{2,n}|_{\mathcal{L}_{k,n+1}}$ is supported on
\[
\mathcal{H}_{i_1,0} \otimes \mathcal{H}_{i_2,0} \otimes \cdots \otimes \mathcal{H}_{i_n,0} \otimes \mathcal{H}_{i_{n+1}}
\]
(since $\mathfrak{A}_i\xi_i \subseteq \mathcal{H}_{i,0}$ for all $i \in \{1,2\}$) where $i_j = k$ if $j$ is odd and $i_j = \ell$ if $j$ is even (and automatically $i_{n+1} = 2$).  However $T$ and $R$ vanish on the above Hilbert space as they are elements of $ker(\pi')$ and $T_{1,n}$ vanishes on the above Hilbert space by Discussion \ref{anjn}.  Hence $\left\|T_{2,n}\right\| < 3\epsilon$ as claimed.
\par
Next we desire to approximate $T_{1,n}$ with an element of $\mathfrak{J}_{(n)}$.  To begin fix $k \in \{1,2\}$ with $k \neq n \mod 2$ and let $\ell \in \{1,2\}\setminus \{k\}$.  Hence $T_{1,n}$ is completely determined by its action on $\mathcal{L}_{k,n+1}$ and
\[
\left\|P_{k,n+1}TP_{k,n+1} - P_{k,n+1}RP_{k,n+1}- P_{k,n+1}T_{1,n}P_{k,n+1}\right\| < 3\epsilon\,\,\,\,\,\,\,\,\,\,\,(**)
\]
by $(*)$ and the fact that $P_{k,n+1}T_{2,n}P_{k,n+1} = 0$ by Discussion \ref{anjn}.
\par
Write 
\[
T_{1,n} = \sum^p_{q=1} A^{(q)}_n S A_{n-1}^{(q)} S \dots A^{(q)}_1 S A^{(q)} S^* B^{(q)}_1 S^* B^{(q)}_2\cdots S^*B^{(q)}_n
\]
where $A^{(q)}_i, B^{(q)}_j \in \mathfrak{A}_1$ if $i,j = 0 \mod 2$, $A^{(q)}_i, B^{(q)}_j \in \mathfrak{A}_2$ if $i,j = 1 \mod 2$, and $A^{(q)} \in \mathfrak{A}_1$ for all $q \in \{1,\ldots, p\}$.  By earlier discussion $T_{1,n}$ acts on $\mathcal{L}_{k,n+1}$ by
\begin{eqnarray} 
&&\!\!\!\! T_{1,n}(\eta_1 \otimes \cdots \otimes \eta_{n+1}) \nonumber\\
&=&\!\!\!\! \sum^p_{q=1}\left\langle \eta_1\otimes \cdots \otimes \eta_n, \left(B^{(q)}_n\right)^*\xi_k \otimes  \cdots \otimes \left(B^{(q)}_1\right)^*\xi_2 \right\rangle_{ \mathcal{L}_{k,n}} \left(A^{(q)}_n\xi_k \otimes  \cdots \otimes A^{(q)}_1\xi_2 \otimes A\eta_{n+1}\right). \nonumber
\end{eqnarray}
By viewing $T_{1,n} \in \mathcal{B}(\mathcal{L}_{k,n+1})$ and by applying the Gram-Schmidt Orthogonalization Process, we can write $T_{1,n}$ as the map
\[
T_{1,n} : \eta_1 \otimes \cdots \otimes \eta_{n+1} \mapsto \sum^\ell_{i=1}\sum^\ell_{j=1} \langle \eta_1 \otimes \cdots \otimes\eta_n, \zeta_j\rangle (\omega_i \otimes A_{i,j}\eta_{n+1})
\]
where $\{A_{i,j}\}^p_{i,j=1} \subseteq span(\{ A^{(q)} \, \mid \, q \in \{1,\ldots,p\}\}) \subseteq \mathfrak{A}_1$ and 
\[
\zeta_j, \omega_i \in \mathcal{N}_{i_1} \otimes \mathcal{N}_{i_2} \otimes \cdots \otimes \mathcal{N}_{i_n}
\]
(where for $i \in \{1,2\}$ $\mathcal{N}_i = \overline{\mathfrak{A}_i \xi_i}$, $i_j = k$ if $j$ is odd, and $i_j = \ell$ if $j$ is even (and automatically $i_{n} = 2$)) are such that $\{\zeta_j\}^p_{j=1}$ and $\{\omega_i\}^p_{i=1}$ are orthonormal sets.
\par
Suppose $\zeta \in \mathcal{L}_{k,n+1}$ is such that $\left\|\zeta\right\| \leq1$.  Then we can write $\zeta = \sum^p_{j=1} \zeta_j \otimes \eta_j + \sum_{\gamma \in \Gamma} \zeta_\gamma \otimes \eta_\gamma$ where $\{\zeta_\gamma\}_{\gamma \in \Gamma} \subseteq \mathcal{L}_{k,n}$ extends $\{\zeta_j\}^p_{j=1}$ to an orthonormal basis of $\mathcal{L}_{k,n}$ and $\eta_j, \eta_\gamma \in \mathcal{H}_1$.  Thus $\sum^p_{j=1} \left\|\eta_j\right\|^2 \leq \left\|\zeta\right\|^2 \leq 1$ and
\begin{eqnarray} 
\left\|T_{1,n}\zeta\right\|_{\mathcal{L}_{k,n+1}}\!\!\!\!&=&\!\!\!\!\left\|\sum^p_{j=1}\sum^p_{i=1} \left\|\zeta_j\right\|^2 \omega_i \otimes A_{i,j}\eta_j\right\|_{\mathcal{L}_{k,n+1}} \nonumber\\
&=&\!\!\!\! \left\|\sum^p_{i=1}\omega_i \otimes \left(\sum^p_{j=1}A_{i,j}\eta_j\right)\right\|_{\mathcal{L}_{k,n+1}} \nonumber\\
&=&\!\!\!\! \left(\sum^p_{i=1}\left\|\sum^p_{j=1}A_{i,j}\eta_j\right\|_{\mathcal{H}_1}^2\right)^\frac{1}{2}\,\,\,\,\,\,\,\,\,\,\,(***). \nonumber
\end{eqnarray}
This final expression is directly related to the norm of $[A_{i,j}] \in \mathcal{M}_p(\mathfrak{A}_1)$.  Indeed recall that $\mathfrak{A}_1$ is acting on $\mathcal{H}_1 = \mathcal{H}_{1,0} \oplus \mathcal{H}_{1,1}$ via $(\pi_{1,0}\circ q) \oplus \pi_{1,1}$ and define $\sigma_p : \mathcal{M}_p(\mathfrak{A}_1) \to \mathcal{B}(\mathcal{H}_1^{\oplus p})$ by
\[
\sigma_p([A'_{i,j}])(h_1 \oplus \cdots \oplus h_\ell) = \bigoplus^p_{i=1}\left(\sum^p_{j=1} A'_{i,j}h_j\right)
\]
for all $[A'_{i,j}] \in \mathcal{M}_p(\mathfrak{A}_1)$.  Clearly $\sigma_p$ is a faithful representation of $\mathcal{M}_p(\mathfrak{A}_1)$ since $(\pi_{1,0}\circ q) \oplus \pi_{1,1}$ is a faithful representation of $\mathfrak{A}_1$.  Notice $\sigma_p([A'_{i,j}])$ is zero on $\mathcal{H}_{1,0}^{\oplus p} \subseteq \mathcal{H}_1^{\oplus p}$ if and only if each $A'_{i,j}$ is zero on $\mathcal{H}_{1,0}$ if and only if $[A'_{i,j}] \in \mathcal{M}_p(\mathfrak{J})$.  Since $\mathcal{M}_p(\mathfrak{J})$ is an ideal of $\mathcal{M}_p(\mathfrak{A}_1)$, $\mathcal{H}_{1,0}^{\oplus p}$ is a reducing subspace for $\sigma_p(\mathcal{M}_p(\mathfrak{A}_1))$, and $\mathcal{M}_p(\mathfrak{J}) = ker\left(\sigma_p|_{\mathcal{H}_{1,0}^{\oplus p}}\right)$, we obtain that $\sigma_p|_{\mathcal{H}_{1,0}^{\oplus p}}$ is a faithful representation of $\mathcal{M}_p(\mathfrak{A}_1)/\mathcal{M}_p(\mathfrak{J}) \simeq \mathcal{M}_p(\mathfrak{A}_1/\mathfrak{J})$.
\par
Using $(**)$ and the fact that $P_{k,n+1}TP_{k,n+1} - P_{k,n+1}RP_{k,n+1}$ is zero on 
\[
\mathcal{H}_{i_{1},0} \otimes \mathcal{H}_{i_{2},0} \otimes \cdots \otimes \mathcal{H}_{i_{n},0} \otimes \mathcal{H}_{i_{n+1},0}
\]
where $i_j = k$ if $j$ is odd and $i_j = \ell$ if $j$ is even (and automatically $i_{n+1} = 1$) (as $T \in \mathcal{E}(ker(\pi'))$ and $R \in span \left( \bigcup_{m < n} \mathfrak{J}_{(m)} \right)$), 
\[
\left\|P_{n+1}T_{1,n}P_{n+1}|_{\mathcal{H}_{i_{1},0} \otimes \mathcal{H}_{i_{2},0} \otimes \cdots \otimes \mathcal{H}_{i_{n},0} \otimes \mathcal{H}_{i_{n+1},0}}\right\| < 3\epsilon.
\]
 As $\omega_i, \zeta_j \in \mathcal{H}_{i_{1},0} \otimes \mathcal{H}_{i_{2},0} \otimes \cdots \otimes \mathcal{H}_{i_{n},0}$ for all $i,j \in \{1,\ldots, p\}$, using $(***)$ we see that
\[
\left(\sum^p_{i=1}\left\|\sum^p_{j=1}A_{i,j}\eta_j\right\|_{\mathcal{H}_{1,0}}^2\right)^\frac{1}{2} < 3\epsilon
\]
for all $\eta_1, \ldots, \eta_p \in \mathcal{H}_{1,0}$ with $\sum^p_{j=1} \left\|\eta_j\right\|^2 \leq 1$.  Whence $\left\|\sigma_p([A_{i,j}])|_{\mathcal{H}_{1,0}^{\oplus p}}\right\| < 3\epsilon$.  Since $\sigma_p|_{\mathcal{H}_{1,0}^{\oplus p}}$ is a faithful representation of $\mathcal{M}_p(\mathfrak{A}_1)/\mathcal{M}_p(\mathfrak{J}) \simeq \mathcal{M}_p(\mathfrak{A}_1/\mathfrak{J})$, there exists a $[J_{i,j}] \in \mathcal{M}_{p}(\mathfrak{J})$ such that $\left\|[A_{i,j}]-[J_{i,j}]\right\|_{\mathcal{M}_p(\mathfrak{A}_1)} < 3\epsilon$.  Thus
\[
\left(\sum^p_{i=1}\left\|\sum^p_{j=1}(A_{i,j}- J_{i,j})\eta_j\right\|_{\mathcal{H}_1}^2\right)^\frac{1}{2} < 3\epsilon
\]
for all $\eta_1, \ldots, \eta_\ell \in \mathcal{H}_1$ with $\sum^p_{j=1} \left\|\eta_j\right\|^2 \leq 1$. 
\par
Define $R' \in \mathcal{B}(\mathcal{L}_{k,n+1})$ by
\[
R': \eta_1 \otimes \cdots \otimes \eta_{n+1} \mapsto \sum^p_{i=1}\sum^p_{j=1} \langle \eta_1 \otimes \cdots \otimes\eta_n, \zeta_j\rangle (\omega_i \otimes J_{i,j}\eta_{n+1})
\]
and extend by linearity and density.  Note that repeating $(***)$ twice shows $R'$ is indeed a bounded linear map and
\[
\left\|R' - P_{k,n+1}T_{1,n}P_{k,n+1}\right\|_{\mathcal{B}(\mathcal{L}_{k,n+1})} < 3\epsilon.
\]
As 
\[
\zeta_j, \omega_i \in \mathcal{N}_{i_1} \otimes \mathcal{N}_{i_2} \otimes \cdots \otimes \mathcal{N}_{i_n},
\]
Discussion \ref{anjn} implies that there exists a $R_0 \in \mathfrak{J}_{(n)}$ such that
\[
\left\|R' - P_{k,n+1}R_0P_{k,n+1}\right\|_{\mathcal{B}(\mathcal{L}_{k,n+1})} < \epsilon.
\]
Whence
\[
\left\|P_{k,n+1}R_0P_{k,n+1} - P_{k,n+1}T_{1,n}P_{k,n+1}\right\| < 4\epsilon.
\]
Since $R_0 \in \mathfrak{J}_{(n)}$ and $T_{1,n} \in \mathfrak{A}_{1,(n)}$, 
\[
\left\|Q_{n+1}R_0Q_{n+1} - Q_{n+1}T_{1,n}Q_{n+1}\right\| = \left\|P_{k,n+1}R_0P_{k,n+1} - P_{k,n+1}T_{1,n}P_{k,n+1}\right\| < 4\epsilon.
\]
By combining all of our approximations
\[
\left\|Q_{n+1}TQ_{n+1} - Q_{n+1}(R + R_0)Q_{n+1}\right\| < 10\epsilon
\]
and, as $R + R_0 \in span\left( \bigcup_{m < n+1} \mathfrak{J}_{(m)}\right)$, the result follows. 
\end{proof}
The above result shows we can approximate elements of $\mathcal{E}(ker(\pi'))$ uniformly on $Q_n \mathcal{K}$ by elements of $span\left(\bigcup_{m < n} \mathfrak{J}_{(m)}\right)$.  The following result shows that this is enough to prove the assumptions of Lemma \ref{lemma2.2.8}.
\begin{lem}
\label{tfinallemma}
Let $T \in \mathcal{E}(C^*(\mathfrak{A}_1 \oplus \mathfrak{A}_2, S))$ and let $\epsilon > 0$.  There exists an $n \in 2\mathbb{N}$ such that
\[
\left\|P_{j,m}TP_{j,m} - P_{j,n}TP_{j,n} \otimes I_{\mathcal{L}_{j,m-n}}\right\|_{\mathcal{B}(\mathcal{L}_{j,m})} < \epsilon
\]
for all $m \geq n$ and $j \in \{1,2\}$ (where $\mathcal{L}_{j,m} \simeq \mathcal{L}_{j,n} \otimes \mathcal{L}_{j,m-n}$ canonically as $n$ is even).
\end{lem}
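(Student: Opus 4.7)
The plan is to reduce to operators of the form $T' = \sum_{i=1}^{2}\sum_{k=0}^{N} T_{i,k}$ with $T_{i,k} \in \mathfrak{A}_{i,(k)}$, for which the stabilization identity holds exactly (rather than approximately), and then transfer the result to a general $T$ using the density provided by Lemma \ref{lemma2.2.9} together with the triangle inequality. The key observation is that Discussion \ref{anjn} already describes the action of each generating element on $\mathcal{L}_{j,m}$ as a tensor product of a ``short'' piece with an identity, so we only need to align these decompositions at a single even level $n$.

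Concretely, I would first use Lemma \ref{lemma2.2.9} to pick $T'$ as above with $\|T - T'\| < \epsilon/3$ and some integer $N$. Next I would choose any $n \in 2\mathbb{N}$ with $n \geq N+1$. Discussion \ref{anjn} tells us precisely how each $T_{i,k}$ acts on $\mathcal{L}_{j,m}$: for $m \geq k+1$ it factors as $\bigl(P_{j,k+1}T_{i,k}|_{\mathcal{L}_{j,k+1}}\bigr) \otimes I_{\mathcal{L}_{\bullet, m-k-1}}$, where the label $\bullet$ is determined by the parity of $k+1$, and it vanishes on $\mathcal{L}_{j,m}$ for the ``other'' parity of $j$. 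Because $n$ is even and $n > k$, the decomposition $\mathcal{L}_{j,m} = \mathcal{L}_{j,n} \otimes \mathcal{L}_{j,m-n}$ is compatible with each of these finer decompositions via associativity of the tensor product, so the action of $T_{i,k}$ on $\mathcal{L}_{j,m}$ can be rewritten as $P_{j,n}T_{i,k}P_{j,n} \otimes I_{\mathcal{L}_{j,m-n}}$. Summing over $(i,k)$ yields
\[
P_{j,m}T'P_{j,m} = P_{j,n}T'P_{j,n} \otimes I_{\mathcal{L}_{j,m-n}}
\]
exactly. A three-term triangle inequality using $\|P_{j,m}(T-T')P_{j,m}\| < \epsilon/3$ and $\|P_{j,n}(T-T')P_{j,n} \otimes I_{\mathcal{L}_{j,m-n}}\| < \epsilon/3$ then delivers the required bound.

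The only real work is the parity bookkeeping that makes the identity $P_{j,m}T_{i,k}P_{j,m} = P_{j,n}T_{i,k}P_{j,n} \otimes I_{\mathcal{L}_{j,m-n}}$ hold. The hypothesis $n \in 2\mathbb{N}$ is used precisely to ensure that the alternating tensor factors in $\mathcal{L}_{j,n}$ end with the index opposite to $j$, so that $\mathcal{L}_{j,m-n}$ (which starts with index $j$) glues onto $\mathcal{L}_{j,n}$ to reproduce $\mathcal{L}_{j,m}$; if $n$ were odd this factorization would not even be available. Once that compatibility is verified, and once it is noted that the $T_{i,k}$ that vanish on $\mathcal{L}_{j,m}$ contribute $0$ to both sides of the identity trivially, no further ideas are required, and the estimate follows from a direct norm computation.
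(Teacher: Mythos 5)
Your proposal is correct and follows essentially the same route as the paper: approximate $T$ by a finite sum $T' \in span\left(\bigcup_{k \leq N}(\mathfrak{A}_{1,(k)} \cup \mathfrak{A}_{2,(k)})\right)$ via Lemma \ref{lemma2.2.9}, use the exact tensor-stabilization of such elements from Discussion \ref{anjn} at an even level $n > N$, and finish with the triangle inequality. Your explicit choice $n \geq N+1$ even tidies up the bookkeeping relative to the paper's choice of cutoff, and the parity argument you give for gluing $\mathcal{L}_{j,n} \otimes \mathcal{L}_{j,m-n} \simeq \mathcal{L}_{j,m}$ is exactly the reason $n$ must be even.
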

\begin{proof}
Fix $\epsilon > 0$.  By Lemma \ref{lemma2.2.9} there exists an $n \in 2\mathbb{N}$ and a $R\in span\left( \bigcup_{m < 2n} (\mathfrak{A}_{1,(m)} \cup \mathfrak{A}_{2,(m)})\right)$ such that $\left\|T-R\right\| < \epsilon$.  Fix $m \geq n$.  Then for all $j \in \{1,2\}$ $R$ acts on $\mathcal{L}_{j,m}$ as $P_{j,n}RP_{j,n} \otimes I_{\mathcal{L}_{j,m-n}}$ and thus 
\[
\left\|P_{j,m}TP_{j,m} - P_{j,n}RP_{j,n} \otimes I_{\mathcal{L}_{j,m-n}}\right\|_{\mathcal{B}(\mathcal{L}_{j,m})} \leq \left\|T - R\right\| < \epsilon.
\]
However $\left\|P_{j,n}TP_{j,n} - P_{j,n}RP_{j,n}\right\| \leq \left\|T - R\right\| < \epsilon$ for all $j \in \{1,2\}$ so
\[
\left\|P_{j,m}TP_{j,m} - P_{j,n}TP_{j,n} \otimes I_{\mathcal{L}_{j,m-n}}\right\|_{\mathcal{B}(\mathcal{L}_{j,m})} < 2\epsilon
\]
as desired.  
\end{proof}
\begin{proof}[Proof of Theorem \ref{dse}] Recall $\mathcal{E}\left(\langle \mathfrak{J}\rangle_{C^*(\mathfrak{A}_1 \oplus \mathfrak{A}_2, S)}\right) \subseteq \mathcal{E}(ker(\pi'))$ by Discussion \ref{disc2.2.7}.  Let $T \in \mathcal{E}(ker(\pi'))$ and let $\epsilon > 0$.  By Lemma \ref{tfinallemma} there exists an $n \in 2\mathbb{N}$ so that 
\[
\left\|P_{j,m}TP_{j,m} - P_{j,n}TP_{j,n} \otimes I_{\mathcal{L}_{j,m-n}}\right\|_{\mathcal{B}(\mathcal{L}_{j,m})} < \epsilon
\]
for all $m \geq n$ and $j \in \{1,2\}$.  By Lemma \ref{complemma} there exists an $R \in span \left( \bigcup_{m < n} \mathfrak{J}_{(m)} \right)$ so that
\[
\left\|Q_n (T - R)Q_n\right\| < \epsilon.
\]
As $T, R \in \mathcal{E}(C^*(\mathfrak{A}_1 \oplus \mathfrak{A}_2, S))$
\[
\left\|T-R\right\| = \sup_{m \geq 1} \max_{j \in \{1,2\}} \left\|P_{j,m}(T-R)P_{j,m}\right\|
\]
by Lemma \ref{lemma2.2.4} and the above inequality implies 
\[
\left\|P_{j,m}(T-R)P_{j,m}\right\|_{\mathcal{L}_{j,m}} < \epsilon
\]
for all $m \leq n$ and $j \in \{1,2\}$.  Thus $\left\|P_{j,n}TP_{j,n} - P_{j,n}RP_{j,n}\right\| < \epsilon$ for all $j \in \{1,2\}$.  However, since $R \in span \left( \bigcup_{m < n} \mathfrak{J}_{(m)} \right)$, $P_{j,m}RP_{j,m} = P_{j,n}RP_{j,n} \otimes I_{\mathcal{L}_{j,m-n}}$ for all $m \geq n$ and $j \in \{1,2\}$ (as $n$ is even) and thus
\[
\left\|P_{j,m}TP_{j,m} - P_{j,m}RP_{j,m}\right\| \leq \epsilon + \left\|P_{j,n}TP_{j,n} \otimes I_{\mathcal{L}_{j,m-n}} - P_{j,n}RP_{j,n} \otimes I_{\mathcal{L}_{j,m-n}}\right\|_{\mathcal{L}_{j,m}} \leq 2\epsilon
\]
for all $m \geq n$ and $j \in \{1,2\}$.  Whence $\left\|T-R\right\| \leq 2\epsilon$.  As $R \in \mathcal{E}\left(\langle \mathfrak{J}\rangle_{C^*(\mathfrak{A}_1 \oplus \mathfrak{A}_2, S)}\right) \subseteq \langle \mathfrak{J}\rangle_{C^*(\mathfrak{A}_1 \oplus \mathfrak{A}_2, S)}$, we obtain that $T \in \langle \mathfrak{J}\rangle_{C^*(\mathfrak{A}_1 \oplus \mathfrak{A}_2, S)}$ and thus $T \in \mathcal{E}\left(\langle \mathfrak{J}\rangle_{C^*(\mathfrak{A}_1 \oplus \mathfrak{A}_2, S)}\right)$.  Hence $\mathcal{E}(ker(\pi')) = \mathcal{E}\left(\langle \mathfrak{J}\rangle_{C^*(\mathfrak{A}_1 \oplus \mathfrak{A}_2, S)}\right)$.  Therefore $ker(\pi') = \langle \mathfrak{J}\rangle_{C^*(\mathfrak{A}_1 \oplus \mathfrak{A}_2, S)}$ by Lemma \ref{lemma2.2.8} as desired.  
\end{proof}

\subsection{Proof of Theorem \ref{main}}

In this section we will complete the proof of Theorem \ref{main}.  By Theorem \ref{dse} we know certain short sequences of C$^*$-algebras are exact and we will use the proof of Theorem 4.8.2 in \cite{BO} to construct a commutative diagram of short sequences.  The proof of Theorem 4.8.2 in \cite{BO} is concrete and allows us to demonstrate that the compression of $\langle \mathfrak{J}\rangle_{C^*(\mathfrak{A}_1 \oplus \mathfrak{A}_2, S)}$ corresponds with the description of $\langle \mathfrak{J}\rangle_{\mathfrak{A}_1 \ast \mathfrak{A}_2}$ given in Section 2.2. The remainder of the proof is then trivial.  We start the proof by re-describing the context.
\begin{disc}
Let $\mathfrak{A}_1$ and $\mathfrak{A}_2$ be unital C$^*$-algebras, let $\mathfrak{J}$ be an ideal of $\mathfrak{A}_1$, let $\pi_{1,0} : \mathfrak{A}_1/\mathfrak{J} \to \mathcal{B}(\mathcal{H}_{1,0})$, $\pi_{1,1} : \mathfrak{A}_1 \to \mathcal{B}(\mathcal{H}_{1,1})$, and $\pi_2 : \mathfrak{A}_2 \to \mathcal{B}(\mathcal{H}_2)$ be unital representations such that $\pi_{1,0}$ and $\pi_2$ are faithful and, if $\mathcal{H}_1 := \mathcal{H}_{1,0} \oplus \mathcal{H}_{1,1}$ and $q : \mathfrak{A}_1 \to \mathfrak{A}_1/\mathfrak{J}$ is the canonical quotient map, $\pi_1 := (\pi_{1,0} \circ q) \oplus \pi_{1,1} : \mathfrak{A}_1 \to \mathcal{B}(\mathcal{H}_1)$ is faithful, and let $\xi_1 \in \mathcal{H}_{1,0}$ and $\xi_2 \in \mathcal{H}_2$ be unit vectors.  For notational purposes let $\mathcal{H}_{2,0} := \mathcal{H}_2$ and let $\pi_{2,0} := \pi_2 : \mathfrak{A}_2 \to \mathcal{B}(\mathcal{H}_{2,0})$.
\par
Using the notation of Discussion \ref{directsumconstruction}, the sequence
\[
0\rightarrow\langle \mathfrak{J}\rangle_{C^*(\mathfrak{A}_1 \oplus \mathfrak{A}_2, S)}\rightarrow C^*(\mathfrak{A}_1 \oplus \mathfrak{A}_2, S)  \stackrel{\pi'}{\rightarrow} C^*((\mathfrak{A}_1/\mathfrak{J}) \oplus \mathfrak{A}_2, S_0) \rightarrow 0
\]
is exact by Theorem \ref{dse}.  In order to show that the sequence
\[
0 \rightarrow \langle \mathfrak{J}\rangle_{\mathfrak{A}_1 \ast \mathfrak{A}_2} \stackrel{i}{\rightarrow} (\mathfrak{A}_1, \pi_1, \xi_1) \ast (\mathfrak{A}_2, \pi_2, \xi_2) \stackrel{\pi}{\rightarrow}(\mathfrak{A}_1/\mathfrak{J}, \pi_{1,0}, \xi_1) \ast (\mathfrak{A}_2, \pi_2, \xi_2) \rightarrow 0
\]
as described in Construction \ref{primecons} is exact, we desire to embed the later sequence into the first.  
\end{disc}
\begin{nota}
\label{mainproofnotation}
Let $\mathfrak{A}_{1,0} = \mathfrak{A}_1/\mathfrak{J}$ and let $\mathfrak{A}_{2,0} = \mathfrak{A}_2$.  Using the notation of Discussion \ref{directsumconstruction}, let 
\[
P = I - S^2(S^*)^2 \in C^*(\mathfrak{A}_1 \oplus \mathfrak{A}_2, S)
\]
and let 
\[
U = P(S+S^*)P \in C^*(\mathfrak{A}_1 \oplus \mathfrak{A}_2, S).
\]
Define the unital, completely positive maps $\psi_i : \mathfrak{A}_i \to PC^*(\mathfrak{A}_1 \oplus \mathfrak{A}_2, S)P$ by 
\[
\psi_i(A) = PAP + UAU
\]
for all $A \in \mathfrak{A}_i$.  Similarly let 
\[
P_0 = I - S_0^2(S_0^*)^2 \in C^*(\mathfrak{A}_{1,0} \oplus \mathfrak{A}_{2,0}, S_0)
\]
and let 
\[
U_0 = P_0(S_0+S_0^*)P_0 \in C^*(\mathfrak{A}_{1,0} \oplus \mathfrak{A}_{2,0}, S_0).
\]
Define the unital, completely positive maps $\psi_{i,0} : \mathfrak{A}_{i,0} \to P_0C^*(\mathfrak{A}_{1,0} \oplus \mathfrak{A}_{2,0}, S_0)P_0$ by 
\[
\psi_{i,0}(A) = P_0AP_0 + U_0AU_0
\]
for all $A \in \mathfrak{A}_{i,0}$.
\par
For all $i \in \{1,2\}$ let 
\[
\mathfrak{A}_i^0 := \{ A \in \mathfrak{A}_i \, \mid \, \langle A\xi_i, \xi_i\rangle_{\mathcal{H}_i} = 0\}\,\,\,\,\,\,\,\,\,\,\mbox{ and let }\,\,\,\,\,\,\,\,\,\,\mathfrak{A}^0_{i,0} := \{ A \in \mathfrak{A}_{i,0} \, \mid \, \langle A\xi_i, \xi_i\rangle_{\mathcal{H}_{i,0}} = 0\}.
\]
 Thus $\mathfrak{A}_i = \mathbb{C}I_{\mathfrak{A}_i} + \mathfrak{A}^0_i$ and $\mathfrak{A}_{i,0} = \mathbb{C}I_{\mathfrak{A}_{i,0}} + \mathfrak{A}^0_{i,0}$ for all $i \in \{1,2\}$.
\end{nota}
\begin{lem}
\label{BOannoy}
There exists a unital, completely positive map 
\[
\Psi : (\mathfrak{A}_1, \pi_1, \xi_1) \ast (\mathfrak{A}_2, \pi_2, \xi_2) \to PC^*(\mathfrak{A}_1 \oplus \mathfrak{A}_2, S)P
\]
such that
\[
\Psi(A_1 \cdots A_n) = \psi_{i_1}(A_1) \cdots \psi_{i_n}(A_n)
\]
whenever $A_k \in \mathfrak{A}^0_{i_k}$, $\{i_k\}^n_{k=1} \subseteq \{1,2\}$, and $i_k \neq i_{k+1}$ for all $k \in \{1,\ldots, n-1\}$.  Moreover there exists a $^*$-homomorphism 
\[
\sigma : C^*(\Psi((\mathfrak{A}_1, \pi_1, \xi_1) \ast (\mathfrak{A}_2, \pi_2, \xi_2))) \to (\mathfrak{A}_1, \pi_1, \xi_1) \ast (\mathfrak{A}_2, \pi_2, \xi_2)
\]
such that $\sigma \circ \Psi = Id_{(\mathfrak{A}_1, \pi_1, \xi_1) \ast (\mathfrak{A}_2, \pi_2, \xi_2)}$.  In fact $\sigma$ is the compression map of $\mathcal{B}(\mathcal{K})$ to $\mathcal{B}(\mathcal{K}_{1,1})$ where $\mathcal{K}_{1,1} \subseteq \mathcal{K}$ is a Hilbert space isomorphic to $(\mathcal{H}_1, \xi_1) \ast (\mathcal{H}_2, \xi_2)$.
\par
Similarly there exists a unital, completely positive map 
\[
\Psi_0 : (\mathfrak{A}_{1,0}, \pi_{1,0}, \xi_1) \ast (\mathfrak{A}_{2,0}, \pi_{2,0}, \xi_2) \to P_0C^*(\mathfrak{A}_{1,0} \oplus \mathfrak{A}_{2,0}, S_0)P_0
\]
such that
\[
\Psi_0(A_1 \cdots A_n) = \psi_{i_1,0}(A_1) \cdots \psi_{i_n,0}(A_n)
\]
whenever $A_k \in \mathfrak{A}^0_{i_k,0}$, $\{i_k\}^n_{k=1} \subseteq \{1,2\}$, and $i_k \neq i_{k+1}$ for all $k \in \{1,\ldots, n-1\}$. 
\end{lem}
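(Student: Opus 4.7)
The plan is to follow the proof of Theorem 4.8.2 of \cite{BO} essentially verbatim, that result being precisely the realization of Boca's theorem via the Toeplitz-Pimsner construction; its proof is concrete enough that the faithful-GNS hypothesis present there is never actually used, so the argument transfers directly to the present setting.

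First I would record the basic identities governing $P$ and $U$: that $P$ is a self-adjoint projection, that $U$ is self-adjoint with $UP = PU = U$ and $U^2 = P$, and that conjugation by $U$ interchanges the two summands of $\mathfrak{A}_1 \oplus \mathfrak{A}_2$ in the sense that $UE_iU = PE_{3-i}P$, where $E_i \in C^*(\mathfrak{A}_1 \oplus \mathfrak{A}_2, S)$ denotes the identity of $\mathfrak{A}_i$ (realized as the projection onto $\mathcal{K}_i$). These follow from $S^*S = I$ and the explicit action of $S$ on $\mathcal{K}$. From them, each $\psi_i$ is clearly UCP: complete positivity is immediate since $\psi_i$ is a sum of two conjugations by bounded operators, and $\psi_i(I_{\mathfrak{A}_i}) = PE_iP + UE_iU = PE_iP + PE_{3-i}P = P$ gives unitality.

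The alternating-multiplicativity property of $\Psi$ is the main calculation. Given $A_k \in \mathfrak{A}_{i_k}^0$ with $i_k \neq i_{k+1}$, one expands the product $\prod_{k=1}^{n} \psi_{i_k}(A_k) = \prod_{k=1}^{n} (PA_kP + UA_kU)$ into $2^n$ terms and shows, using the centering condition $\langle A_k \xi_{i_k}, \xi_{i_k}\rangle = 0$ together with the flip structure of $U$, that all cross-terms telescope, leaving a single product that realizes the desired formula. This is precisely the argument of \cite{BO}, and since it never uses faithfulness of the GNS representation of $\xi_i$ on $\mathfrak{A}_i$, it adapts without modification. The resulting map, defined initially on the dense $^*$-subalgebra spanned by $I$ and by alternating reduced words, extends by UCP-continuity to a unital completely positive map $\Psi$ on all of $(\mathfrak{A}_1, \pi_1, \xi_1) \ast (\mathfrak{A}_2, \pi_2, \xi_2)$.

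To construct $\sigma$, I would identify a Hilbert subspace $\mathcal{K}_{1,1} \subseteq \mathcal{K}$ with $(\mathcal{H}_1, \xi_1) \ast (\mathcal{H}_2, \xi_2)$ by sending each summand $\mathcal{H}_{i_1}^0 \otimes \cdots \otimes \mathcal{H}_{i_n}^0$ into $\mathcal{L}_{i_1,n} \subseteq \mathcal{K}_{i_1}$ via the inclusions $\mathcal{H}_i^0 \hookrightarrow \mathcal{H}_i$ and sending the distinguished vacuum $\xi_0$ of the free product to a specific unit vector of $\mathcal{K}$. A computation parallel to the multiplicativity verification shows that $\mathcal{K}_{1,1}$ is reducing for the image $\Psi((\mathfrak{A}_1,\pi_1,\xi_1) \ast (\mathfrak{A}_2,\pi_2,\xi_2))$ and that the compression of $\Psi(A)$ to $\mathcal{K}_{1,1}$ agrees, under this identification, with the canonical action of $A$ on $(\mathcal{H}_1,\xi_1) \ast (\mathcal{H}_2,\xi_2)$ described in Notation \ref{freeprod}. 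The resulting compression $\sigma$ is then the desired $^*$-homomorphism and $\sigma \circ \Psi = Id$ by construction. The argument for $\Psi_0$ is identical after substituting $\mathfrak{A}_{i,0}$, $\pi_{i,0}$, $S_0$, $P_0$, and $U_0$ throughout. The most delicate step is the telescoping calculation in the alternating-multiplicativity verification, which requires careful bookkeeping of how $P$, $U$, and the embedded copies of $\mathfrak{A}_i$ interact in the Toeplitz-Pimsner algebra.
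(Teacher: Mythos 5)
Your proposal is correct and takes essentially the same route as the paper: the paper's proof of Lemma \ref{BOannoy} simply invokes the concrete proof of Theorem 4.8.2 of \cite{BO} and observes that the faithful-GNS hypothesis there is never actually used, which is exactly your strategy. Your additional detail on the $P$, $U$ identities, the telescoping computation, and the identification of $\mathcal{K}_{1,1}$ with $(\mathcal{H}_1,\xi_1)\ast(\mathcal{H}_2,\xi_2)$ matches the content of that cited argument and the paper's closing remark.
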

\begin{proof}
The proof of the above result is contained in Theorem 4.8.2 of \cite{BO}.  Note that the proof in \cite{BO} is done under the assumptions that $\pi_1$, $\pi_2$, and $\pi_{1,0}$ are the faithful representations corresponding to a GNS construction.  However these assumptions are not used in the proof.
\par
For later purposes we remark that the Hilbert subspace $\mathcal{K}_{1,1}$ of $\mathcal{K}$ is the subspace
\[
\mathcal{H}_1 \oplus \left( \bigoplus_{n\geq 0} \mathcal{H}_1 \otimes (\mathcal{H}_2^0 \otimes \mathcal{H}_{1}^0)^{\otimes n} \otimes \mathcal{H}_2^0 \otimes \mathcal{H}_1  \right)
\]
and is isomorphic to $(\mathcal{H}_1, \xi_1) \ast (\mathcal{H}_2, \xi_2)$ via the standard identifications $\mathbb{C}\xi_1 \otimes \mathcal{H}_2^0 \simeq \mathcal{H}_2^0$ and $\mathcal{H}_2^0 \otimes \mathbb{C}\xi_1 \simeq \mathcal{H}_2^0$.
\end{proof}
\begin{lem}
\label{comm}
With $\Psi$ and $\Psi_0$ as in Lemma \ref{BOannoy}, the diagram
\[
\begin{array}{ccc}
(\mathfrak{A}_1, \pi_1, \xi_1) \ast (\mathfrak{A}_2, \pi_2, \xi_2) &\stackrel{\pi}{\rightarrow}&(\mathfrak{A}_1/\mathfrak{J}, \pi_{1,0}, \xi_1) \ast (\mathfrak{A}_2, \pi_2, \xi_2) \\
\downarrow \Psi & & \downarrow \Psi_0 \\
C^*(\mathfrak{A}_1 \oplus \mathfrak{A}_2, S)  &\stackrel{\pi'}{\rightarrow} &C^*((\mathfrak{A}_1/\mathfrak{J}) \oplus \mathfrak{A}_2, S_0)
\end{array} 
\]
commutes.
\end{lem}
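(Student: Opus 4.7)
The plan is to check the diagram on a dense subalgebra and invoke continuity. Concretely, the span of $\{I\}$ together with reduced words
\[
A_1 A_2 \cdots A_n \quad \text{with } A_k \in \mathfrak{A}^0_{i_k},\ i_k \neq i_{k+1},
\]
is dense in $(\mathfrak{A}_1, \pi_1, \xi_1) \ast (\mathfrak{A}_2, \pi_2, \xi_2)$. All four maps in the diagram are bounded (indeed $\pi, \pi'$ are $^*$-homomorphisms and $\Psi, \Psi_0$ are u.c.p.\ by Lemma~\ref{BOannoy}), so it suffices to verify equality on such reduced words, where it is trivial for the identity.

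First I would spell out the action of $\pi$ on generators. Since $(\mathcal{H}_{1,0}, \xi_1) \ast (\mathcal{H}_2, \xi_2)$ is invariant, hence reducing, for the $^*$-algebra $(\mathfrak{A}_1, \pi_1, \xi_1) \ast (\mathfrak{A}_2, \pi_2, \xi_2)$, the compression $\pi$ is a $^*$-homomorphism. Using $\pi_1 = (\pi_{1,0} \circ q) \oplus \pi_{1,1}$ and $\xi_1 \in \mathcal{H}_{1,0}$, one checks that for $A \in \mathfrak{A}_1$ the compression acts on $(\mathcal{H}_{1,0}, \xi_1)\ast(\mathcal{H}_2, \xi_2)$ exactly as $q(A)$ does through $\pi_{1,0}$, so $\pi(A) = q(A)$ in $(\mathfrak{A}_1/\mathfrak{J}, \pi_{1,0}, \xi_1) \ast (\mathfrak{A}_2, \pi_2, \xi_2)$; for $B \in \mathfrak{A}_2$, since $\mathcal{H}_{2,0}=\mathcal{H}_2$ and $\pi_{2,0}=\pi_2$, $\pi(B) = B$. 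Moreover if $A \in \mathfrak{A}^0_1$ then $\langle \pi_{1,0}(q(A))\xi_1, \xi_1\rangle = \langle \pi_1(A)\xi_1,\xi_1\rangle = 0$, so $\pi(A) \in \mathfrak{A}^0_{1,0}$. Hence on reduced words, $\pi$ maps reduced words to reduced words of the same length with factors $\pi(A_k) \in \mathfrak{A}^0_{i_k, 0}$, and so by Lemma~\ref{BOannoy}
\[
\Psi_0(\pi(A_1 \cdots A_n)) = \psi_{i_1,0}(\pi(A_1)) \cdots \psi_{i_n,0}(\pi(A_n)).
\]

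The heart of the computation is the identity $\pi' \circ \psi_i = \psi_{i,0} \circ \pi$ on $\mathfrak{A}_i$. Here I would use that $\pi'$ is a $^*$-homomorphism with $\pi'(S) = S_0$, $\pi'(A \oplus B) = q(A) \oplus B$ for $A \oplus B \in \mathfrak{A}_1 \oplus \mathfrak{A}_2$ (all established in Construction~\ref{directsumconstruction}). This immediately gives $\pi'(P) = \pi'(I - S^2(S^*)^2) = I - S_0^2(S_0^*)^2 = P_0$ and $\pi'(U) = \pi'(P(S+S^*)P) = P_0(S_0 + S_0^*)P_0 = U_0$. Then for $A \in \mathfrak{A}_i$,
\[
\pi'(\psi_i(A)) = \pi'(P)\pi'(A)\pi'(P) + \pi'(U)\pi'(A)\pi'(U) = P_0 \pi(A) P_0 + U_0 \pi(A) U_0 = \psi_{i,0}(\pi(A)),
\]
where we used $\pi'(A) = q(A) = \pi(A)$ when $i=1$ and $\pi'(B) = B = \pi(B)$ when $i=2$.

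Finally, applying $\pi'$ (a $^*$-homomorphism) to the formula from Lemma~\ref{BOannoy} gives
\[
\pi'(\Psi(A_1 \cdots A_n)) = \pi'(\psi_{i_1}(A_1)) \cdots \pi'(\psi_{i_n}(A_n)) = \psi_{i_1,0}(\pi(A_1)) \cdots \psi_{i_n,0}(\pi(A_n)),
\]
which matches $\Psi_0(\pi(A_1 \cdots A_n))$ computed above. By linearity and continuity the diagram commutes on all of $(\mathfrak{A}_1, \pi_1, \xi_1) \ast (\mathfrak{A}_2, \pi_2, \xi_2)$. There is no real obstacle here; the only subtlety worth double-checking is that $\pi$ is genuinely a $^*$-homomorphism (it is, because $(\mathcal{H}_{1,0}, \xi_1)\ast(\mathcal{H}_2, \xi_2)$ is invariant for a $^*$-closed algebra and therefore reducing) and that $\pi$ preserves the property of being in $\mathfrak{A}^0_{i}$, so that Lemma~\ref{BOannoy} can be applied on both sides.
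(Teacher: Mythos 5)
Your proposal is correct and follows essentially the same route as the paper: reduce to the dense span of the identity and reduced words, use the multiplicativity properties of $\Psi$ and $\Psi_0$ from Lemma \ref{BOannoy}, and verify the key identity $\pi'\circ\psi_i = \psi_{i,0}\circ\pi$ on each $\mathfrak{A}_i$ via $\pi'(S)=S_0$ and $\pi'(A\oplus B)=q(A)\oplus B$. The extra details you supply (checking $\pi'(P)=P_0$, $\pi'(U)=U_0$, and that $\pi$ maps $\mathfrak{A}^0_{i}$ into $\mathfrak{A}^0_{i,0}$) are exactly the steps the paper leaves implicit.
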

\begin{proof}
Recall that the span of $I_{(\mathfrak{A}_1, \pi_1, \xi_1) \ast (\mathfrak{A}_2, \pi_2, \xi_2)}$ and
\[
\{A_1A_2 \cdots A_n \, \mid \, A_k \in \mathfrak{A}_{i_k}^0, \{i_k\}^n_{k=1} \subseteq \{1,2\}, i_k \neq i_{k+1} \mbox{ for all }k \in \{1,\ldots, n-1\}\}\,\,\,\,\,\,\,\,\,\,(*)
\]
is dense in $(\mathfrak{A}_1, \pi_1, \xi_1) \ast (\mathfrak{A}_2, \pi_2, \xi_2)$.  Therefore, to prove the diagram commutes, it suffices by linearity and density to verify that $\Psi_0\left(\pi\left(I_{(\mathfrak{A}_1, \pi_1, \xi_1) \ast (\mathfrak{A}_2, \pi_2, \xi_2)}\right)\right) = \pi'\left(\Psi\left(I_{(\mathfrak{A}_1, \pi_1, \xi_1) \ast (\mathfrak{A}_2, \pi_2, \xi_2)}\right)\right)$ and $\Psi_0(\pi(T)) = \pi'(\Psi_0(T))$ for all $T$ in the set given in $(*)$.  However $\Psi_0\left(\pi\left(I_{(\mathfrak{A}_1, \pi_1, \xi_1) \ast (\mathfrak{A}_2, \pi_2, \xi_2)}\right)\right) = \pi'\left(\Psi\left(I_{(\mathfrak{A}_1, \pi_1, \xi_1) \ast (\mathfrak{A}_2, \pi_2, \xi_2)}\right)\right)$ is trivial as $\Psi$, $\Psi_0$, $\pi$, and $\pi'$ are unital.  If $A_k \in \mathfrak{A}_{i_k}^0$ for $k \in \{1,\ldots, n\}$, $\{i_k\}^n_{k=1} \subseteq \{1,2\}$, and $i_k \neq i_{k+1}$ for all $k \in \{1,\ldots, n-1\}$ then
\[
\Psi_0(\pi(A_1A_2 \cdots A_n))=\Psi_0(\pi(A_1)\pi(A_2) \cdots \pi(A_n)) = \psi_{i_1,0}(\pi(A_1))\psi_{i_2,0}(\pi(A_2)) \cdots \psi_{i_n,0}(\pi(A_n)) 
\]
since $\pi$ is a $^*$-homomorphism, $\pi(A_k)$ is an element of $\mathfrak{A}^0_{i_k, 0}$ for all $k \in \{1,\ldots, n\}$, and by the properties of $\Psi_0$ from Lemma \ref{BOannoy}, and
\[ 
\pi'(\Psi(A_1A_2 \cdots A_n)) = \pi'(\psi_{i_1}(A_1)\psi_{i_2}(A_2) \cdots \psi_{i_n}(A_n))= \pi'(\psi_{i_1}(A_1))\pi'(\psi_{i_2}(A_2)) \cdots \pi'(\psi_{i_n}(A_n)) 
\]
by the properties of $\Psi$ from Lemma \ref{BOannoy} and since $\pi'$ is a $^*$-homomorphism.  However, for all $i \in \{1,2\}$ and $A \in \mathfrak{A}_i$,
\[
\psi_{i,0}(\pi(A)) = P_0 \pi(A) P_0 + U_0 \pi(A) U_0 = \pi'(PAP + UAU) = \pi'(\psi_i(A)).
\]
Hence $\Psi_0 \circ \pi = \pi' \circ \Psi$ on a set with dense span in $(\mathfrak{A}_1, \pi_1, \xi_1) \ast (\mathfrak{A}_2, \pi_2, \xi_2)$ and thus the result follows by linearity and density.
\end{proof}
The final technical challenge of the proof of Theorem \ref{main} is the following.
\begin{lem}
\label{compideal}
Let $\sigma : \mathcal{B}(\mathcal{K}) \to \mathcal{B}((\mathcal{H}_1, \xi_1) \ast (\mathcal{H}_2, \xi_2))$ be the compression map from Lemma \ref{BOannoy}.  If $T \in \langle \mathfrak{J}\rangle_{C^*(\mathfrak{A}_1 \oplus \mathfrak{A}_2, S)}$ then $\sigma(T) \in \langle \mathfrak{J} \rangle_{\mathfrak{A}_1 \ast \mathfrak{A}_2}$.
\end{lem}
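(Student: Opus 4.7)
The plan is to reduce via continuity to a dense spanning set of $\langle \mathfrak{J}\rangle_{C^*(\mathfrak{A}_1 \oplus \mathfrak{A}_2, S)}$ and then compute $\sigma(T)$ explicitly on it. Since $\sigma$ is a compression, it is a contractive linear map, and since $\langle \mathfrak{J}\rangle_{\mathfrak{A}_1 \ast \mathfrak{A}_2}$ is norm-closed, it suffices to verify the claim for $T$ of the form
\[
T = (A_nS)(A_{n-1}S)\cdots(A_1S)J(S^*B_1)(S^*B_2)\cdots(S^*B_m),
\]
with $J \in \mathfrak{J}$ and the alternating conditions given in Discussion \ref{directideal}.

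For such $T$, I would compute $\sigma(T) = P_{\mathcal{K}_{1,1}} T|_{\mathcal{K}_{1,1}}$ directly by applying $T$ to a typical vector $v = \eta_0 \otimes \zeta_1 \otimes \eta_1 \otimes \cdots \otimes \zeta_k \otimes \eta_k \in \mathcal{K}_{1,1}$, using the description of $\mathcal{K}_{1,1}$ given in Lemma \ref{BOannoy}. A parity check first shows that $\sigma(T) = 0$ unless $n$ and $m$ have matching parities (handling these cases trivially). Otherwise, applying each $S^* B_j$ in succession peels off the leading coordinate of $v$ to produce a scalar factor $\langle B_j v_{?}, \xi_{?}\rangle$; then $J$ acts on the first remaining $\mathcal{H}_1$-coordinate; and finally each $A_k S$ prepends $A_k \xi_{?}$ to the front. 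Thus $Tv$ equals a scalar in the $B$-factors times a tensor whose first $n$ coordinates are $A_n\xi_?, \ldots, A_1\xi_?$ and whose tail is the appropriately-shifted tail of $v$ with $J$ applied to one coordinate.

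To match this with the generators of $\langle \mathfrak{J}\rangle_{\mathfrak{A}_1 \ast \mathfrak{A}_2}$ listed in Discussion \ref{2.4.1}, I would next split each prepended $A_k\xi_{?}$ into its $\mathbb{C}\xi$-component and its orthogonal complement via $\mathcal{H}_i = \mathbb{C}\xi_i \oplus \mathcal{H}^0_i$. After expanding, each summand of $\sigma(T)$ acts on $\mathcal{K}_{1,1}$, identified with $(\mathcal{H}_1, \xi_1) \ast (\mathcal{H}_2, \xi_2)$, exactly as one of the four generator patterns from Discussion \ref{2.4.1} acts on $(\mathcal{H}_1, \xi_1) \ast (\mathcal{H}_2, \xi_2)$, by the explicit formulas in Discussions \ref{idealdis1} and \ref{idealdis2}. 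Consequently $\sigma(T)$ is a finite linear combination of these generator-actions and thus lies in $\langle \mathfrak{J}\rangle_{\mathfrak{A}_1 \ast \mathfrak{A}_2}$.

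The main obstacle will be the careful bookkeeping: organizing the case analysis by the parities of $n$ and $m$ (to decide which of the four generator patterns from Discussion \ref{2.4.1} actually arises), correctly handling the decomposition $\mathcal{H}_1 = \mathcal{H}_{1,0} \oplus \mathcal{H}_{1,1}$ that appears in the first and last coordinates of $\mathcal{K}_{1,1}$, and matching the scalar factors $\langle A_k\xi_?, \xi_?\rangle$ and $\langle B_j\xi_?, \xi_?\rangle$ generated by the $\mathbb{C}\xi \oplus \mathcal{H}^0$ splitting against the inner-product coefficients appearing in the explicit action formulas of Discussions \ref{idealdis1} and \ref{idealdis2}.
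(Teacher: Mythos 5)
Your proposal is correct and follows essentially the same route as the paper: reduce by linearity and contractivity of $\sigma$ to the dense spanning set of words from Discussion \ref{directideal}, compute the compression to $\mathcal{K}_{1,1}$ explicitly on a typical tensor, and match the result against the generator actions recorded in Discussions \ref{idealdis1} and \ref{idealdis2}. The only organizational difference is that the paper performs the decomposition $\mathfrak{A}_i = \mathbb{C}I_{\mathfrak{A}_i} + \mathfrak{A}_i^0$ on the generators up front (so that words containing an interior identity are seen to have zero compression), whereas you carry it out on the image vectors afterward; this is immaterial.
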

\begin{proof}
By Discussion \ref{directideal} and the notation in \ref{mainproofnotation}, it is easy to see that the span of 
\[
\left\{\left.
\begin{array}{l}
(A_1S)(B_{1}S) \cdots (A_nS)(B_nS)J(S^*B'_m)(S^*A'_m)\cdots (S^*B'_1)(S^*A'_1) \\
(B_{1}S)(A_{2}S) \cdots (A_nS)(B_nS)J(S^*B'_m)(S^*A'_m)\cdots (S^*B'_1)(S^*A'_1) \\ 
(A_1S)(B_{1}S) \cdots (A_nS)(B_nS)J(S^*B'_m)(S^*A'_m)\cdots (S^*A'_2)(S^*B'_1) \\
(B_{1}S)(A_{2}S) \cdots (A_nS)(B_nS)J(S^*B'_m)(S^*A'_m)\cdots (S^*A'_2)(S^*B'_1)
\end{array}
\, \right| \,
\begin{array}{l}
n,m\geq 0, J \in \mathfrak{J}, \\
A_i, A'_j \in \mathfrak{A}^0_1 \cup\{I_{\mathfrak{A}_1}\}, \\
B_i, B'_j \in \mathfrak{A}^0_2 \cup \{I_{\mathfrak{A}_2}\}
\end{array} 
\right\}\,\,\,\,\,\,\,\,\,(*)
\]
is dense in $\langle \mathfrak{J}\rangle_{C^*(\mathfrak{A}_1 \oplus \mathfrak{A}_2, S)}$.  Recall that $\sigma$ was the compression of $\mathcal{K}$ onto
\[
\mathcal{K}_{1,1} = \mathcal{H}_1 \oplus \left( \bigoplus_{n\geq 0} \mathcal{H}_1 \otimes (\mathcal{H}_2^0 \otimes \mathcal{H}_{1}^0)^{\otimes n} \otimes \mathcal{H}_2^0 \otimes \mathcal{H}_1  \right) \subseteq \mathcal{K}
\]
and $\mathcal{K}_{1,1}$ is isomorphic to $(\mathcal{H}_1, \xi_1) \ast (\mathcal{H}_2, \xi_2)$ via the standard identifications $\mathbb{C}\xi_1 \otimes \mathcal{H}_2^0 \simeq \mathcal{H}_2^0$ and $\mathcal{H}_2^0 \otimes \mathbb{C}\xi_1 \simeq \mathcal{H}_2^0$.  Therefore it suffices to show that the compression of each element in $(*)$ to $\mathcal{K}$ corresponds to an element in $\langle \mathfrak{J}\rangle_{\mathfrak{A}_1 \ast \mathfrak{A}_2}$ as described in Section 2.2.
\par
Notice $B(\mathcal{K}_{1,1}) = \{0\}$ for all $B \in \mathfrak{A}_2$.  Therefore it suffices to consider only the set
\[
\left\{
\begin{array}{l}
(A_1S)(B_{1}S) \cdots (A_nS)(B_nS)J(S^*B'_m)(S^*A'_m)\cdots (S^*B'_1)(S^*A'_1) \\
(B_{1}S)(A_{2}S) \cdots (A_nS)(B_nS)J(S^*B'_m)(S^*A'_m)\cdots (S^*B'_1)(S^*A'_1)\end{array}
\, \left| \,
\begin{array}{l}
n,m\geq 0, J \in \mathfrak{J}, \\
A_i, A'_j \in \mathfrak{A}^0_1 \cup\{I_{\mathfrak{A}_1}\}, \\
B_i, B'_j \in \mathfrak{A}^0_2 \cup \{I_{\mathfrak{A}_2}\}
\end{array} 
\right.
\right\}\,\,\,\,\,\,\,\,\,(**).
\]
Fix $n\geq 0$, $m\geq 1$, $J \in \mathfrak{J}$, $\{A_i\}^n_{i=1}, \{A'_j\}^m_{j=1} \subseteq \mathfrak{A}^0_1 \cup\{I_{\mathfrak{A}_1}\}$, and $\{B_i\}^n_{i=1}, \{B'_j\}^m_{j=1} \subseteq \mathfrak{A}^0_2 \cup\{I_{\mathfrak{A}_2}\}$ and let 
\[
T = (A_1S)(B_{1}S) \cdots (A_nS)(B_nS)J(S^*B'_m)(S^*A'_m)\cdots (S^*B'_1)(S^*A'_1).
\]
If 
\[
\eta_1 \otimes \zeta_1 \otimes \cdots \otimes \eta_m \otimes \zeta_m \otimes \eta \in \mathcal{H}_1 \otimes (\mathcal{H}_2^0 \otimes \mathcal{H}_{1}^0)^{\otimes m-1} \otimes \mathcal{H}_2^0 \otimes \mathcal{H}_1 
\]
then
\begin{eqnarray} 
\!\!\!\!&&\!\!\!\! T(\eta_1 \otimes \zeta_1 \otimes \cdots \otimes \eta_m \otimes \zeta_m \otimes \eta)\nonumber\\
&=&\!\!\!\! \langle A'_1 \eta_1, \xi_1\rangle_{\mathcal{H}_1}  (A_1S)(B_{1}S) \cdots (A_nS)(B_nS)J(S^*B'_m)(S^*A'_m)\cdots (S^*B'_1)(\zeta_1 \otimes \cdots \otimes \eta_m \otimes \zeta_m \otimes \eta) \nonumber\\
&\vdots&\!\!\!\!  \nonumber\\
&=&\!\!\!\! \left(\prod_{k=1}^m \langle A'_k \eta_k, \xi_1\rangle_{\mathcal{H}_1}\right)\left(\prod_{k=1}^m \langle B'_k \zeta_k, \xi_2\rangle_{\mathcal{H}_2}\right) (A_1S)(B_{1}S) \cdots (A_nS)(B_nS)J \eta \nonumber\\
&=&\!\!\!\! \left(\prod_{k=1}^m \langle  \eta_k, (A'_k)^*\xi_1\rangle_{\mathcal{H}_1}\right)\left(\prod_{k=1}^m \langle \zeta_k, ( B'_k )^*\xi_2\rangle_{\mathcal{H}_2}\right) (A_1S)(B_{1}S) \cdots (A_nS)(B_n\xi_2 \otimes J \eta)  \nonumber\\
&\vdots&\!\!\!\!  \nonumber\\
&=&\!\!\!\! \left(\prod_{k=1}^m \langle  \eta_k, (A'_k)^*\xi_1\rangle_{\mathcal{H}_1}\right)\left(\prod_{k=1}^m \langle \zeta_k, ( B'_k )^*\xi_2\rangle_{\mathcal{H}_2}\right) (A_1\xi_1) \otimes (B_1\xi_2) \otimes \cdots \otimes (A_n\xi_1) \otimes (B_n\xi_2) \otimes J \eta.  \nonumber
\end{eqnarray}
It is easy to see that $T$ is zero on $\mathcal{H}_1 \otimes (\mathcal{H}_2^0 \otimes \mathcal{H}_{1}^0)^{\otimes k} \otimes \mathcal{H}_2^0 \otimes \mathcal{H}_1 $ for all $k \leq m-1$ and if $k \geq m-1$ and
\[
\eta_1 \otimes \zeta_1 \otimes \cdots \otimes \eta_m \otimes \zeta_m \otimes \eta_{m+1} \otimes \zeta_{m+1} \otimes \cdots \otimes \eta_{k+2} \in \mathcal{H}_1 \otimes (\mathcal{H}_2^0 \otimes \mathcal{H}_{1}^0)^{\otimes k} \otimes \mathcal{H}_2^0 \otimes \mathcal{H}_1 
\]
then
\[
T(\eta_1 \otimes \zeta_1 \otimes \cdots \otimes \eta_m \otimes \zeta_m \otimes \eta_{m+1} \otimes \zeta_{m+1} \otimes \cdots \otimes \eta_{k+2}) = T(\eta_1 \otimes \zeta_1 \otimes \cdots \otimes \eta_m \otimes \zeta_m \otimes \eta_{m+1}) \otimes \zeta_{m+1} \otimes \cdots \otimes \eta_{k+2}.
\]
\par
If $B_j = I_{\mathfrak{A}_2}$ for some $j \in \{1,\ldots, n\}$ then $B_j\xi_2$ is orthogonal to $\mathcal{H}_2^0$ and thus the image of $T|_{\mathcal{K}_{1,1}}$ is orthogonal to $\mathcal{K}_{1,1}$ so $\sigma(T) = 0$.  Similarly if $B'_j = I_{\mathfrak{A}_2}$ for some $j \in \{1,\ldots, m\}$ then $\langle \zeta_j, (B'_j)^*\xi_2\rangle_{\mathcal{H}_2} = 0$ for all $\zeta_j \in \mathcal{H}^0_2$ so $T$ is zero on $\mathcal{K}_{1,1}$ and thus $\sigma(T) = 0$.  Therefore we may assume that $\{B_i\}^n_{i=1}, \{B'_j\}^m_{j=1} \subseteq \mathfrak{A}^0_2$.
\par
If $A_j = I_{\mathfrak{A}_1}$ for some $j \in \{2,\ldots, n\}$ then $A_j\xi_1$ is orthogonal to $\mathcal{H}_1^0$ and thus the image of $T|_{\mathcal{K}_{1,1}}$ is orthogonal to $\mathcal{K}_{1,1}$ so $\sigma(T) = 0$.  Similarly if $A'_j = I_{\mathfrak{A}_1}$ for some $j \in \{2,\ldots, m\}$ then $\langle \eta_j, (A'_j)^*\xi_1\rangle_{\mathcal{H}_1} = 0$ for all $\eta_j \in \mathcal{H}^0_1$ so $T$ is zero on $\mathcal{K}_{1,1}$ and thus $\sigma(T) = 0$.  Therefore we may assume that $\{A_i\}^n_{i=2}, \{A'_j\}^m_{j=2} \subseteq \mathfrak{A}^0_1$.
\par
By examining Discussion \ref{idealdis1} and Discussion \ref{idealdis2}, notice if $A_1, A'_1 \in \mathfrak{A}_1^0$ then $\sigma(T)$ corresponds to the operator 
\[
A_1B_{1} \cdots A_nB_nJB'_mA'_m\cdots B'_1A'_1 \in \langle \mathfrak{J}\rangle_{\mathfrak{A}_1 \ast \mathfrak{A}_2}.
\]
If $A_1 \in \mathfrak{A}_1^0$ and $A'_1 = I_{\mathfrak{A}_1}$ then $\sigma(T)$ corresponds to the operator 
\[
A_1B_{1} \cdots A_nB_nJB'_mA'_m\cdots A'_2B'_1 \in \langle \mathfrak{J}\rangle_{\mathfrak{A}_1 \ast \mathfrak{A}_2}.
\]
If $A'_1 \in \mathfrak{A}_1^0$ and $A_1 = I_{\mathfrak{A}_1}$ then $\sigma(T)$ corresponds to the operator 
\[
B_{1}A_2 \cdots A_nB_nJB'_mA'_m\cdots B'_1A'_1 \in \langle \mathfrak{J}\rangle_{\mathfrak{A}_1 \ast \mathfrak{A}_2}.
\]
Finally if $A_1, A'_1 = I_{\mathfrak{A}_1}$ then $\sigma(T)$ corresponds to the operator 
\[
B_{1}A_2 \cdots A_nB_nJB'_mA'_m\cdots A'_2B'_1 \in \langle \mathfrak{J}\rangle_{\mathfrak{A}_1 \ast \mathfrak{A}_2}.
\]
Hence $\sigma(T) \in \langle \mathfrak{J}\rangle_{\mathfrak{A}_1 \ast \mathfrak{A}_2}$ for all $T$ of this form.  The case where $m = 0$ is identical (as $J\xi_1 = 0$ for all $J \in \mathfrak{J}$).
\par
Using similar arguments to those listed above, the image of $\mathcal{K}_{1,1}$ under 
\[
(B_{1}S)(A_{2}S) \cdots (A_nS)(B_nS)J(S^*B'_m)(S^*A'_m)\cdots (S^*B'_1)(S^*A'_1)
\]
is in the orthogonal complement of $\mathcal{K}_{1,1}$ and thus these operators have zero compression for all $n,m\geq 0$,  $J \in \mathfrak{J}$, $\{A_i\}^n_{i=1}, \{A'_j\}^m_{j=1} \subseteq \mathfrak{A}^0_1 \cup\{I_{\mathfrak{A}_1}\}$, and $\{B_i\}^n_{i=1}, \{B'_j\}^m_{j=1} \subseteq \mathfrak{A}^0_2 \cup\{I_{\mathfrak{A}_2}\}$.  
\par
As the span of $(*)$ is dense in $\langle \mathfrak{J}\rangle_{C^*(\mathfrak{A}_1 \oplus \mathfrak{A}_2, S)}$, the result follows.
\end{proof}
\begin{proof}[Proof of Theorem \ref{main}]  Suppose $T \in ((\mathfrak{A}_1, \pi_1, \xi_1) \ast (\mathfrak{A}_2, \pi_2, \xi_2)) \cap ker(\pi)$.  Therefore
\[
\pi'(\Psi(T)) = \Psi_0(\pi(T)) = 0
\]
by Lemma \ref{comm}.  By Theorem \ref{dse}
\[
\Psi(T) \in \langle \mathfrak{J}\rangle_{C^*(\mathfrak{A}_1 \oplus \mathfrak{A}_2, S)}. 
\]
Therefore
\[
\sigma(\Psi(T)) \in \langle \mathfrak{J}\rangle_{(\mathfrak{A}_1, \pi_1, \xi_1) \ast (\mathfrak{A}_2, \pi_2, \xi_2)}
\]
by Lemma \ref{compideal}.  However $T = \sigma(\Psi(T))$ by Lemma \ref{BOannoy} so
\[
T \in \langle \mathfrak{J}\rangle_{(\mathfrak{A}_1, \pi_1, \xi_1) \ast (\mathfrak{A}_2, \pi_2, \xi_2)}
\]
as desired.  
\end{proof}

\section{Limits of Free Products}
\label{sect:LimitsFreeProducts}

With the modification to the third equivalence of Theorem \ref{exactnessTheorem} complete, we turn our attention to developing the analog of the fifth equivalence of Theorem \ref{exactnessTheorem} in the context of reduced free products.  The following is the adaptation of the fifth equivalence of Theorem \ref{exactnessTheorem} to reduced free products and is a generalization of the appendix of \cite{Ma} due to Shlyakhtenko (where, if $\mathfrak{A}_i$ are C$^*$-algebras with states $\varphi_i$ that have faithful GNS representations, $(\mathfrak{A}_1, \varphi_1) \ast (\mathfrak{A}_2, \varphi_2)$ is the reduced free product, $\varphi_1 \ast \varphi_2$ is the vector state on $(\mathfrak{A}_1, \varphi_1) \ast (\mathfrak{A}_2, \varphi_2)$ corresponding to the distinguished vector, $\mathbb{C} \langle t_1, \ldots, t_n\rangle$ denotes set of all complex polynomials in $n$ non-commuting variables and their complex conjugates, and a pair $(\mathfrak{A}, \tau)$ is said to be a non-commutative probability space if $\mathfrak{A}$ is a unital C$^*$-algebra and $\tau$ is a state on $\mathfrak{A}$ with a faithful GNS representation):
\begin{thm2}
\label{ultraproduct}
For each $k \in \mathbb{N}$ let $\left\{X_i^{(k)}\right\}^n_{i=1}$ be generators for a non-commutative probability space $(\mathfrak{A}_k, \tau_k)$.  Let $\{X_i\}^n_{i=1}$ be generators for a non-commutative probability space $(\mathfrak{A}, \tau)$ and let $\{Y_i\}^m_{i=1}$ be generators for a non-commutative probability space $(\mathfrak{B}, \varphi)$.   Suppose that 
\begin{enumerate}
	\item $\limsup_{k\to\infty}\left\|q\left(X_1^{(k)}, \ldots, X_n^{(k)}\right)\right\|_{\mathfrak{A}_k} = \left\|q(X_1,\ldots, X_n)\right\|_{\mathfrak{A}}$ for all $q \in \mathbb{C} \langle t_1, \ldots, t_n\rangle$, and
	\item $\lim_{k\to\infty} \tau_k\left(q\left(\left(X_1^{(k)}, \ldots, X_n^{(k)}\right)\right)\right) = \tau(q(X_1,\ldots,X_n))$ for all $q \in \mathbb{C} \langle t_1, \ldots, t_n\rangle$.
\end{enumerate}
Then
\[
\lim_{k\to\infty} \left\|p\left(X_1^{(k)}, \ldots, X^{(k)}_n, Y_1, \ldots, Y_m\right)\right\|_{(\mathfrak{A}_k, \tau_k) \ast (\mathfrak{B}, \varphi)} = \left\|p(X_1, \ldots, X_n, Y_1, \ldots, Y_m)\right\|_{(\mathfrak{A}, \tau) \ast (\mathfrak{B}, \varphi)}
\]
for all $p \in \mathbb{C} \langle t_1, \ldots, t_{n+m}\rangle$.
\end{thm2}
By examining the fifth equivalence of Theorem \ref{exactnessTheorem}, it can easily be seen that the above question is connected with the notion of an exact C$^*$-algebra by replacing tensor products with reduced free products.  To begin the proof of Theorem \ref{ultraproduct}, we note one inequality is trivially implied.
\par
\begin{lem2}
\label{oneinequal}
With the assumptions and notation of Theorem \ref{ultraproduct}, 
\[
\liminf_{k\to\infty} \left\|p\left(X_1^{(k)}, \ldots, X^{(k)}_n, Y_1, \ldots, Y_m\right)\right\|_{(\mathfrak{A}_k, \tau_k) \ast (\mathfrak{B}, \varphi)} \geq \left\|p(X_1, \ldots, X_n, Y_1, \ldots, Y_m)\right\|_{(\mathfrak{A}, \tau) \ast (\mathfrak{B}, \varphi)}
\]
for all $p \in \mathbb{C} \langle t_1, \ldots, t_{n+m}\rangle$.
\end{lem2}
\begin{proof}
First we claim if $p \in \mathbb{C} \langle t_1, \ldots, t_{n+m}\rangle$ is arbitrary then
\[
(\tau \ast \varphi)(p(X_1, \ldots, X_n, Y_1, \ldots, Y_m)) = \lim_{k\to\infty} (\tau_k \ast \varphi)\left(p\left(X_1^{(k)}, \ldots, X^{(k)}_n, Y_1, \ldots, Y_m\right)\right).
\]
To see this, notice $p(t_1,\ldots, t_{n+m})$ can be written as 
\[
p(t_1,\ldots, t_{n+m}) = \sum^N_{\ell =1} \prod^{z_\ell}_{w=1} p_{\ell,w}(t_1, \ldots, t_n)q_{\ell,w}(t_{n+1}, \ldots, t_{n+m})
\]
where $\tau(p_{\ell,w}(X_1, \ldots, X_n)) = 0$ and $\varphi(q_{\ell,w}(Y_1, \ldots, Y_m)) = 0$ for all $\ell \in \{1,\ldots, N\}$ and $w \in \{1,\ldots, z_\ell\}$ except possible for possible $p_{\ell, 1}$ and $q_{\ell, z_\ell}$ which can be constant functions.  Thus
\[
(\tau \ast \varphi)(p(X_1, \ldots, X_n, Y_1, \ldots, Y_m)) = \sum^N_{\ell =1} \prod^{z_\ell}_{w=1}\tau(p_{\ell,w}(X_1, \ldots, X_n))\varphi(q_{\ell,w}(Y_1, \ldots, Y_m))
\]
by freeness.  Moreover
\[
(\tau_k \ast \varphi)\left(p\left(X_1^{(k)}, \ldots, X^{(k)}_n, Y_1, \ldots, Y_m\right)\right) = \sum^N_{\ell =1} (\tau_k \ast \varphi)\left(\prod^{z_\ell}_{w=1} p_{\ell,w}\left(X^{(k)}_1, \ldots, X^{(k)}_n\right) q_{\ell,w}(Y_1, \ldots, Y_m)  \right)
\]
by linearity.  Thus it suffices to show
\[
\prod^{z_\ell}_{w=1}\tau(p_{\ell,w}(X_1, \ldots, X_n))\varphi(q_{\ell,w}(Y_1, \ldots, Y_m)) = \lim_{k\to\infty} (\tau_k \ast \varphi)\left( \prod^{z_\ell}_{w=1}  p_{\ell,w}\left(X^{(k)}_1, \ldots, X^{(k)}_n \right) q_{\ell,w}(Y_1, \ldots, Y_m) \right)
\]
for all $\ell \in \{1,\ldots, N\}$.  However, in the case that $\tau(p_{\ell,w}(X_1, \ldots, X_n)) = 0$ and $\varphi(q_{\ell,w}(Y_1, \ldots, Y_m)) = 0$ for all $w \in \{1,\ldots, z_\ell\}$, notice
\begin{eqnarray} 
&&\!\!\!\!  \prod^{z_\ell}_{w=1}  \left(p_{\ell,w}\left(X^{(k)}_1, \ldots, X^{(k)}_n \right) - \tau_k\left(p_{\ell,w}\left(X^{(k)}_1, \ldots, X^{(k)}_n \right) \right)I_{\mathfrak{A}_k} \right) q_{\ell,w}(Y_1, \ldots, Y_m) \nonumber\\
&=&\!\!\!\!  \left(\prod^{z_\ell}_{w=1}  p_{\ell,w}\left(X^{(k)}_1, \ldots, X^{(k)}_n \right) q_{\ell,w}(Y_1, \ldots, Y_m) \right) + T_k \nonumber
\end{eqnarray}
where $T_k$ is the sum of products of terms in 
\[
\left\{p_{\ell,w}\left(X^{(k)}_1, \ldots, X^{(k)}_n \right), \tau_k\left(p_{\ell,w}\left(X^{(k)}_1, \ldots, X^{(k)}_n \right)\right), q_{\ell,w}(Y_1, \ldots, Y_m)\, \mid \, w \in \{1,\ldots, z_\ell\}    \right\}\,\,\,\,\,\,\,\,\,(*)
\]
where each product contains at least one $\tau_k\left(p_{\ell,w}\left(X^{(k)}_1, \ldots, X^{(k)}_n\right) \right)$ and $T_{k'}$ can be obtained from $T_k$ by exchanging the index $k$ with $k'$.   Hence
\begin{eqnarray} 
&&\!\!\!\!  (\tau_k \ast \varphi)\left(\prod^{z_\ell}_{w=1}  p_{\ell,w}\left(X^{(k)}_1, \ldots, X^{(k)}_n \right) q_{\ell,w}(Y_1, \ldots, Y_m)\right) + (\tau_k \ast \varphi)(T_k) \nonumber\\
&=&\!\!\!\!  (\tau_k \ast \varphi) \left(\prod^{z_\ell}_{w=1}  \left(p_{\ell,w}\left(X^{(k)}_1, \ldots, X^{(k)}_n \right) - \tau_k\left(p_{\ell,w}\left(X^{(k)}_1, \ldots, X^{(k)}_n \right) \right)I_{\mathfrak{A}_k} \right)q_{\ell,w}(Y_1, \ldots, Y_m)\right) \nonumber\\
&=&\!\!\!\!  \prod^{z_\ell}_{w=1}  \tau_k\left(p_{\ell,w}\left(X^{(k)}_1, \ldots, X^{(k)}_n \right) - \tau_k\left(p_{\ell,w}\left(X^{(k)}_1, \ldots, X^{(k)}_n \right) \right)I_{\mathfrak{A}_k} \right) \varphi(q_{\ell,w}(Y_1, \ldots, Y_m)) \nonumber\\
&=&\!\!\!\!  0 \nonumber
\end{eqnarray}
by freeness.  Since $\tau(p_{\ell,w}(X_1, \ldots, X_n)) = 0$ and $\varphi(q_{\ell,w}(Y_1, \ldots, Y_m)) = 0$ for all $w \in \{1,\ldots, z_\ell\}$, the assumptions of the lemma imply
\[
\lim_{k\to \infty} (\tau_k \ast \varphi)(T_k) = 0
\]
as every term in $(*)$ is bounded by the first assumption of the lemma and
\[
\lim_{k\to \infty} \tau_k\left(p_{\ell,w}\left(X^{(k)}_1, \ldots, X^{(k)}_n \right)\right) = \tau(p_{\ell,w}(X_1, \ldots, X_n)) = 0
\]
for all $w \in \{1,\ldots, z_\ell\}$ by the second assumption of the lemma.  Hence
\[
 \lim_{k\to \infty} (\tau_k \ast \varphi)\left(\prod^{z_\ell}_{w=1}  p_{\ell,w}\left(X^{(k)}_1, \ldots, X^{(k)}_n \right) q_{\ell,w}(Y_1, \ldots, Y_m)\right) = 0 = \prod^{z_\ell}_{w=1}\tau(p_{\ell,w}(X_1, \ldots, X_n))\varphi(q_{\ell,w}(Y_1, \ldots, Y_m)).
\]
As similar computations hold when $p_{\ell, 1}$ and/or $q_{\ell, z_\ell}$ are constants, the claim has been proven.
\par
For each $k \in \mathbb{N}$ let $\left\|T\right\|_{2, \tau_k \ast \varphi} = (\tau_k \ast \varphi)(T^*T)^\frac{1}{2}$ for all $T \in (\mathfrak{A}_k, \tau_k) \ast (\mathfrak{B}, \varphi)$ and let $\left\|T\right\|_{2, \tau \ast \varphi} = (\tau \ast \varphi)(T^*T)^\frac{1}{2}$ for all $T \in (\mathfrak{A}, \tau) \ast (\mathfrak{B}, \varphi)$.  Notice the above implies that
\[
\lim_{k\to\infty} \left\|p\left(X_1^{(k)}, \ldots, X^{(k)}_n, Y_1, \ldots, Y_m \right)\right\|_{2,\tau_k\ast \varphi} = \left\|p(X_1, \ldots, X_n, Y_1, \ldots, Y_m)\right\|_{2,\tau\ast\varphi}
\]
for all polynomials $p \in \mathbb{C}\langle t_1, \ldots, t_{n+m}\rangle$.  However, by considering the construction of the reduced free product, for a fixed polynomial $p \in \mathbb{C}\langle t_1, \ldots, t_{n+m}\rangle$
\begin{eqnarray} 
&&\!\!\!\! \left\|p(X_1, \ldots, X_n, Y_1, \ldots, Y_m)\right\|\nonumber\\
&=&\!\!\!\!\sup\left\{|(\tau\ast\varphi)((p\cdot p_1 \cdot p_2)(X_1, \ldots, X_n, Y_1, \ldots, Y_m))| \, \left|\, 
\begin{array}{l}
p_i \in \mathbb{C}\langle t_1,\ldots, t_{n+m}\rangle,  \\
\left\|p_i(X_1, \ldots, X_n, Y_1, \ldots, Y_m)\right\|_{2,\tau\ast\varphi} < 1
\end{array}
\right.
\right\}. \nonumber
\end{eqnarray}
However, for all $p, p_1, p_2 \in \mathbb{C}\langle t_1, \ldots, t_{n+m}\rangle$,
\begin{eqnarray} 
\!\!\!\!&&\!\!\!\! |(\tau\ast\varphi)((p\cdot p_1 \cdot p_2)(X_1, \ldots, X_n, Y_1, \ldots, Y_m))|\nonumber\\
&=&\!\!\!\! \lim_{k\to\infty} \left|(\tau_k\ast\varphi)\left((p\cdot p_1 \cdot p_2)\left(X^{(k)}_1, \ldots, X^{(k)}_n, Y_1, \ldots, Y_m\right)\right)\right|\nonumber\\
&\leq&\!\!\!\! \liminf_{k\to\infty} \left(\prod_{i=1,2}\left\|p_i\left(X_1^{(k)}, \ldots, X^{(k)}_n, Y_1, \ldots, Y_m\right)\right\|_{2,\tau_k\ast \varphi} \right) \left\|p\left(X^{(k)}_1, \ldots, X^{(k)}_n, Y_1, \ldots, Y_m\right)\right\|_{(\mathfrak{A},\tau_k)\ast(\mathfrak{B},\varphi)}  \nonumber\\
&=&\!\!\!\! \left(\prod_{i=1,2}\left\|p_i(X_1, \ldots, X_n, Y_1, \ldots, Y_m)\right\|_{2,\tau\ast\varphi}\right) \liminf_{k\to\infty} \left\|p\left(X^{(k)}_1, \ldots, X^{(k)}_n, Y_1, \ldots, Y_m\right)\right\|_{(\mathfrak{A},\tau_k)\ast(\mathfrak{B},\varphi)} \nonumber
\end{eqnarray}
Hence, as the above holds for all $ p_1, p_2 \in \mathbb{C}\langle t_1, \ldots, t_{n+m}\rangle$, the result follows. 
\end{proof}
\begin{rem2}
\label{3.3}
Using the notation in Theorem \ref{ultraproduct}, consider the C$^*$-subalgebra $\mathfrak{C}$ of 
\[
\frac{\prod_{k\geq1} \left((\mathfrak{A}_k, \tau_k) \ast (\mathfrak{B},\varphi) \right)}{\bigoplus_{k\geq1} \left((\mathfrak{A}_k, \tau_k) \ast (\mathfrak{B},\varphi)\right)}
\]
generated by 
\[
\left\{  \left(X^{(k)}_j\right)_{k\geq 1} + \bigoplus_{k\geq1} \left((\mathfrak{A}_k, \tau_k) \ast (\mathfrak{B},\varphi)\right) \, \mid \, j \in \{1,\ldots, n\}\right\}
\]
and
\[
\left\{  \left( Y_j\right)_{k\geq 1} + \bigoplus_{k\geq1} \left((\mathfrak{A}_k, \tau_k) \ast (\mathfrak{B},\varphi)\right) \, \mid \, j \in \{1,\ldots, m\}\right\}.
\]
Lemma \ref{oneinequal} tells us that there exists a map
\[
\Psi : \mathfrak{C} \to (\mathfrak{A}, \tau) \ast (\mathfrak{B}, \varphi)
\]
defined by
\[
\Psi\left(\left(X^{(k)}_j\right)_{k\geq 1} + \bigoplus_{k\geq1} \left((\mathfrak{A}_k, \tau_k) \ast (\mathfrak{B},\varphi)\right) \right) = X_j
\]
for all $j \in \{1,\ldots, n\}$ and
\[
\Psi\left(\left(Y_j\right)_{k\geq 1} + \bigoplus_{k\geq1} \left((\mathfrak{A}_k, \tau_k) \ast (\mathfrak{B},\varphi)\right) \right) = Y_j
\]
for all $j \in \{1,\ldots, m\}$.  Moreover $\Psi$ is an isomorphism if and only if 
\[
\limsup_{k\to\infty} \left\|p\left(X_1^{(k)}, \ldots, X^{(k)}_n, Y_1, \ldots, Y_m\right)\right\|_{(\mathfrak{A}_k, \tau_k) \ast (\mathfrak{B}, \varphi)} \leq \left\|p(X_1, \ldots, X_n, Y_1, \ldots, Y_m)\right\|_{(\mathfrak{A}, \tau) \ast (\mathfrak{B}, \varphi)}
\]
for all polynomials $p \in \mathbb{C}\langle t_1,\ldots, t_{n+m}\rangle$.  Thus Theorem \ref{ultraproduct} is true if and only if $\Psi$ is an isomorphism.  The question of whether $\Psi$ is an isomorphism can be considered as a modification of the fourth equivalence of Theorem \ref{exactnessTheorem}.
\end{rem2}
Our next goal is to prove Theorem \ref{ultraproduct} provided that $\mathfrak{B}$ is an exact C$^*$-algebra.  To do this we reprove the following known results from the appendix of \cite{Ma} that prove Theorem \ref{ultraproduct} when the $Y_j$ are free creation operators on a Fock space. 
\begin{lem2} 
\label{3.4}
Let $\mathfrak{A}$ be a unital C$^*$-algebra with a state $\tau$ with a faithful GNS representation and let $\mathfrak{B}$ be the universal C$^*$-algebra generated by $\mathfrak{A}$ and elements $L_1, \ldots, L_n$ satisfying $L_i^* A L_j = \delta_{i,j} \tau(A)$ for all $A \in \mathfrak{A}$ (where $\delta_{i,j}$ is the Kronecker delta function).  Let $\psi$ be the linear functional on $^*$-$Alg(\mathfrak{A}, \{L_j\}^n_{j=1})$ defined by $\psi|_\mathfrak{A} = \tau$ and
\[
\psi(A_0 L_{i_1} A_1 \cdots A_{k-1} L_{i_k} A_k A'_0 L_{j_1}^* A'_1 \cdots A'_{\ell-1} L_{j_\ell}^* A'_\ell) = 0
\]
whenever $A_1, \ldots, A_k, A'_1,\ldots, A'_\ell \in \mathfrak{A}$ and at least one of $k$ and $\ell$ is non-zero.  Then $\psi$ extends to a state on $\mathfrak{B}$ having a faithful GNS representation.   Moreover, if $(\mathfrak{A}, \tau) \ast (\mathcal{E}, \phi)$ where $(\mathcal{E}, \phi)$ is the C$^*$-algebra generated by $n$ free creation operators $\ell_1,\ldots, \ell_n$ on the full Fock space $\mathcal{F}(\mathbb{C}^n)$ and $\phi$ is the vacuum expectation, there exists an isomorphism $\Phi : (\mathfrak{B}, \psi) \to (\mathfrak{A}, \tau) \ast (\mathcal{E}, \phi)$ such that $\Phi(A) = A$ for all $A \in \mathfrak{A}$ and $\Phi(L_j) = \ell_j$ for all $j \in \{1,\ldots, m\}$.
\end{lem2}
\begin{proof}
Let $\left(\widehat{\mathfrak{B}}, \widehat{\psi}\right)$ be the reduced free product $(\mathfrak{A}, \tau) \ast (\mathcal{E}, \phi)$.  By Corollary 2.5 of \cite{Sh} $\ell_i A \ell_j^* = \delta_{i,j} \tau(A)$ for all $A \in \mathfrak{A}$ and
\[
\widehat{\psi}(A_0 \ell_{i_1} A_1 \cdots A_{k-1} \ell_{i_k} A_k A'_0 \ell_{j_1}^* A'_1 \cdots A'_{\ell-1} \ell_{j_\ell}^* A'_\ell) = 0
\]
whenever $A_1, \ldots, A_k, A'_1,\ldots, A'_\ell \in \mathfrak{A}$ and at least one of $k$ and $\ell$ is non-zero.  Hence, by the universal property of $\mathfrak{B}$, there exists a $^*$-homomorphism $\Phi : \mathfrak{B} \to \widehat{\mathfrak{B}}$ such that $\psi = \widehat{\psi} \circ \Phi$.  
\par
To complete the lemma it suffices to prove $\Phi$ is injective.  However, by \cite{Pi} (and by applying the same `Fourier series'-like argument as in Section 2.3), it suffices to check that the linear span of $\{A L_i^* B L_jC \, \mid \, i,j \in \{1,\ldots, n\}, A,B,C \in\mathfrak{A}\}$ is dense in $\mathfrak{A}$ and that there exists a homomorphism $\alpha : \{z \in \mathbb{C} \, \mid \, |z| = 1\} \to Hom(\widehat{\mathfrak{B}})$ such that $\alpha_z(A) = A$ for all $A \in \mathfrak{A}$ and $\alpha_z(\ell_j) = z\ell_j$ for all $j \in \{1,\ldots, n\}$.  However the first claim is trivial by taking $i = j$, $B = I_\mathfrak{A} = C$.  Since it is trivial to verify that there exists a homomorphism $\alpha : \{z \in \mathbb{C} \, \mid \, |z| = 1\} \to Hom(\widehat{\mathfrak{B}})$ such that $\alpha_z(\ell_j) = z\ell_j$ for all $j \in \{1,\ldots, n\}$, taking the free product with the identity map on $\mathfrak{A}$ will complete the lemma.  
\end{proof}
\begin{lem2}
\label{3.5}
Theorem \ref{ultraproduct} is true with the additional assumptions that $\mathfrak{B}$ is the C$^*$-algebra generated by $m$ creation operators $\ell_1, \ldots, \ell_m$ on a Fock space and $\varphi$ is the vector state of the vacuum vector.
\end{lem2}
\begin{proof}
Consider the C$^*$-algebra
\[
\mathfrak{D} := \frac{\prod_{k\geq 1} \left((\mathfrak{A}_k, \tau_k) \ast (C^*(\ell_1, \ldots,\ell_m), \varphi)\right)}{\bigoplus_{k\geq 1} \left((\mathfrak{A}_k, \tau_k) \ast (C^*(\ell_1, \ldots,\ell_n), \varphi)\right)}.
\]
Let
\[
X'_j := \left(X^{(k)}_j\right)_{k\geq 1} + \bigoplus_{k\geq 1} \left((\mathfrak{A}_k, \tau_k) \ast (C^*(\ell_1, \ldots,\ell_n), \varphi)\right)
\]
for $j \in \{1,\ldots, n\}$ and
\[
L_j := \left(\ell_j\right)_{k\geq 1} + \bigoplus_{k\geq 1} \left((\mathfrak{A}_k, \tau_k) \ast (C^*(\ell_1, \ldots,\ell_n), \varphi)\right)
\]
for $j \in \{1,\ldots, m\}$.  Notice, by the first assumption of the theorem, $\mathfrak{A}$ is isomorphic to the C$^*$-subalgebra of $\mathfrak{D}$ generated by $\{X'_j\}^n_{j=1}$.  Let $\mathfrak{C}$ be the C$^*$-subalgebra of $\mathfrak{D}$ generated by $\mathfrak{A}$ and $\{L_j\}^m_{j=1}$.  By Remarks \ref{3.3} there exists a $^*$-homomorphism $\Psi : \mathfrak{C} \to (\mathfrak{A},\tau) \ast (C^*(\ell_1, \ldots, \ell_m), \varphi)$ such that
$\Psi(X'_j) = X_j$ for all $j \in \{1,\ldots, n\}$ and $\Psi(L_j) = \ell_j$ for all $j \in \{1,\ldots, m\}$.
\par
We claim that $\Psi$ is an isomorphism.  To see this, notice for all polynomials $p \in \mathbb{C} \langle t_1,\ldots, t_n\rangle$ that
\begin{eqnarray} 
L^*_i p(X'_1, \ldots, X'_n)L_j \!\!\!\!&=&\!\!\!\! \left(\ell_i^* p\left(x^{(k)}_1, \ldots, x^{(k)}_n\right)\ell_j \right)_{k\geq 1} + \bigoplus_{k\geq 1} \left((\mathfrak{A}_k, \tau_k) \ast (C^*(\ell_1, \ldots,\ell_n), \varphi)\right) \nonumber\\
&=&\!\!\!\! \delta_{i,j}\left(\tau_k\left(p\left(x^{(k)}_1, \ldots, x^{(k)}_n\right)\right) \right)_{k\geq 1} + \bigoplus_{k\geq 1} \left((\mathfrak{A}_k, \tau_k) \ast (C^*(\ell_1, \ldots,\ell_n), \varphi)\right) \nonumber\\
&=&\!\!\!\! \delta_{i,j}\tau( p(X'_1, \ldots, X'_n)) \left(I_{(\mathfrak{A}_k, \tau_k) \ast (C^*(\ell_1, \ldots,\ell_n), \varphi)} \right)_{k\geq 1} + \bigoplus_{k\geq 1} \left((\mathfrak{A}_k, \tau_k) \ast (C^*(\ell_1, \ldots,\ell_n), \varphi)\right) \nonumber\\
&=&\!\!\!\! \delta_{i,j}\tau( p(X'_1, \ldots, X'_n)) I_\mathfrak{A} \nonumber
\end{eqnarray}
by the second assumption of the theorem.  Hence, by Lemma \ref{3.4} and by universality, there exists a $^*$-homomorphism $\Phi : (\mathfrak{A},\tau) \ast (C^*(\ell_1, \ldots, \ell_m), \varphi) \to \mathfrak{C}$ such that $\Phi(X_j) = X'_j$ for all $j \in \{1,\ldots, n\}$ and $\Phi(\ell_j) = L_j$ for all $j \in \{1,\ldots, m\}$.  Hence $\Psi$ is invertible with inverse $\Phi$.  Thus the result follows from Remarks \ref{3.3}.
\end{proof}
Our next goal is to prove Theorem \ref{ultraproduct} provided $C^*(Y_1,\ldots, Y_m)$ is exact.  To do this we desire an embedding of the reduced free product of two C$^*$-algebras $\mathfrak{A}$ and $\mathfrak{B}$ into a reduced free product involving $\mathfrak{A} \otimes_{\min} \mathfrak{B}$.
\begin{lem2}
\label{2.6.6}
Let $\mathfrak{A}$ and $\mathfrak{B}$ be unital C$^*$-algebras, let $\varphi$ and $\psi$ be states on $\mathfrak{A}$ and $\mathfrak{B}$ respectively with faithful GNS representations.  Let $\ell_1$ be the unilateral forward shift on $\ell_2(\mathbb{N})$, let $\{e_n\}_{n\geq1}$ be the standard orthonormal basis for $\ell_2(\mathbb{N})$, and let $\phi : C^*(\ell_1) \to \mathbb{C}$ be defined by $\phi(T) = \langle Te_1,e_1\rangle$ for all $T \in C^*(\ell_1)$.  Then there exists a unitary $U \in C^*(\ell_1)$ (independent of $\mathfrak{A}$ and $\mathfrak{B}$) and an injective $^*$-homomorphism 
\[
\Psi : (\mathfrak{A}, \varphi) \ast (\mathfrak{B}, \psi) \to (\mathfrak{A} \otimes_{\min} \mathfrak{B}, \varphi \otimes \psi) \ast (C^*(\ell_1), \phi)
\]
such that $\Psi(A) = A \otimes I_\mathfrak{B}$ and $\Psi(B) = U^*(I_\mathfrak{A} \otimes B)U$ for all $A \in \mathfrak{A}$ and $B \in \mathfrak{B}$. 
\end{lem2}
\begin{proof}
See Proposition 4.2 of \cite{DS}.  Alternatively a (not necessarily injective) $^*$-homomorphism can be constructed by Theorem 4.7.2 of \cite{BO} and by showing that $\mathfrak{A} \otimes I_\mathfrak{B}$ and $U^*(I_\mathfrak{A} \otimes B)U$ are free with respect to any self-adjoint unitary $U$ such that $Ue_1 = e_2$ and $Ue_2 = e_1$.  The proof that $\Psi$ is then injective can be done by constructing a compression map from the Hilbert space that $(\mathfrak{A} \otimes_{\min} \mathfrak{B}, \varphi \otimes \psi) \ast (C^*(\ell_1), \phi)$ acts on to an isomorphic copy of the Hilbert space $(\mathfrak{A}, \varphi) \ast (\mathfrak{B}, \psi)$ acts on.
\end{proof}
\begin{lem2}
\label{ultraexact}
Theorem \ref{ultraproduct} is true under the additional assumption that $\mathfrak{B}$ is an exact C$^*$-algebra.
\end{lem2}
\begin{proof}
Since $\mathfrak{B}$ is exact, by the fifth equivalence of Theorem \ref{exactnessTheorem} and by the first assumption of the lemma, we obtain that
\begin{eqnarray} 
&&\!\!\!\! \limsup_{k\to\infty} \left\|p\left(X_1^{(k)} \otimes I, \ldots, X^{(k)}_n\otimes I, I \otimes Y_1, \ldots, I \otimes Y_m\right)\right\|_{\mathfrak{A}_k \otimes_{\min} \mathfrak{B}}\nonumber\\
&=&\!\!\!\! \left\|p(X_1\otimes I, \ldots, X_n\otimes I, I \otimes Y_1, \ldots, I \otimes Y_m)\right\|_{\mathfrak{A} \otimes \mathfrak{B}} \nonumber
\end{eqnarray}
for all $p \in \mathbb{C} \langle t_1, \ldots, t_{n+m}\rangle$.  By the structure of the states on the tensor products and by the second assumption of the lemma,
\[
\lim_{k\to\infty} (\tau_k \otimes \varphi)\left(p\left(X_1^{(k)} \otimes I, \ldots, X^{(k)}_n\otimes I, I \otimes Y_1, \ldots, I \otimes Y_m\right)\right) = (\tau \otimes \varphi)(p(X_1\otimes I, \ldots, X_n\otimes I, I \otimes Y_1, \ldots, I \otimes Y_m))
\]
for all $p \in \mathbb{C} \langle t_1, \ldots, t_{n+m}\rangle$.  Therefore Lemma \ref{3.5} implies 
\begin{eqnarray} 
&&\!\!\!\! \lim_{k\to\infty} \left\|p\left(X_1^{(k)} \otimes I, \ldots, X^{(k)}_n\otimes I, I \otimes Y_1, \ldots, I \otimes Y_m, T\right)\right\|_{(\mathfrak{A}_k \otimes_{\min} \mathfrak{B}, \tau_k \otimes \varphi) \ast (C^*(\ell_1), e_1) } \nonumber\\
&=&\!\!\!\! \left\|p(X_1\otimes I, \ldots, X_n\otimes I, I \otimes Y_1, \ldots, I \otimes Y_m, T)\right\|_{(\mathfrak{A} \otimes_{\min}\mathfrak{B}, \tau \otimes \varphi) \ast (C^*(\ell_1), e_1) } \nonumber
\end{eqnarray}
for all $p \in \mathbb{C} \langle t_1, \ldots, t_{n+m+1}\rangle$ and for all $T \in C^*(\ell_1)$.  By using $T = U$ where $U$ is a unitary as in Lemma \ref{2.6.6} and by viewing $(\mathfrak{A}_k, \tau_k) \ast (\mathfrak{B}, \varphi)$ and $(\mathfrak{A}, \tau) \ast (\mathfrak{B}, \varphi)$ in $(\mathfrak{A}_k \otimes_{\min} \mathfrak{B}, \tau_k \otimes \varphi) \ast (C^*(\ell_1), e_1)$ and $(\mathfrak{A} \otimes_{\min} \mathfrak{B}, \tau \otimes \varphi) \ast (C^*(\ell_1), e_1)$ respectively, the result follows.  
\end{proof}
Just as Lemma \ref{ultraexact} upgraded Lemma \ref{3.5} to exact C$^*$-algebras by use of Lemma \ref{2.6.6} and tensor products, we will use Lemma \ref{ultraexact} along with the following lemma involving direct sums to prove Theorem \ref{ultraproduct}.
\begin{lem2}
\label{directsumlemma}
For $i \in \{1,2\}$ let $(\mathfrak{A}_i,\tau_i)$ be non-commutative probability space.  Let $\tau : \mathfrak{A}_1 \oplus \mathfrak{A}_2 \to \mathbb{C}$ be the state given by
\[
\tau(A_1 \oplus A_2) = \frac{1}{2}(\tau_1(A_1) + \tau_2(A_2))
\]
for all $A_1 \in \mathfrak{A}_1$ and $A_2 \in \mathfrak{A}_2$.  
\par
Let $\mathcal{O}_2$ be the Cuntz algebra, let $\mathcal{F}_2$ be the canonical CAR C$^*$-subalgebra of $\mathcal{O}_2$, let $\tau' : \mathcal{F}_2 \to \mathbb{C}$ be the unique normalized trace on $\mathcal{F}_2$, let $\mathcal{E} : \mathcal{O}_2 \to \mathcal{F}_2$ be the canonical conditional expectation of $\mathcal{O}_2$ onto $\mathcal{F}_2$, and let $\sigma:= \tau' \circ \mathcal{E} : \mathcal{O}_2 \to \mathbb{C}$.  Note $\sigma$ is a faithful state.
\par
Let $\mathfrak{C}$ be any C$^*$-algebra with a state $\rho$ such that there exists a unitary $U \in \mathfrak{C}$ such that $\rho|_{C^*(U)}$ is faithful, $\rho(U) = 0$, and the GNS representation of $\mathfrak{C}$ with respect to $\rho$ is faithful.  
\par
Then there exists an injective $^*$-homomorphism $\pi : (\mathfrak{A}_1, \tau_1) \ast (\mathfrak{A}_2, \tau_2) \to ((\mathfrak{A}_1\oplus \mathfrak{A}_2) \otimes \mathcal{O}_2, \tau \otimes \sigma) \ast (\mathfrak{C}, \rho)$ such that there exists elements $X, Y, Z, W \in C^*(I \otimes \mathcal{O}_2, \mathfrak{C}) \subseteq ((\mathfrak{A}_1\oplus \mathfrak{A}_2) \otimes \mathcal{O}_2, \tau \otimes \sigma) \ast (\mathfrak{C}, \rho)$ independent of the choice of $\mathfrak{A}_1$ and $\mathfrak{A}_2$ such that $\pi(A_1) = X(A_1 \oplus 0)Y$ for all $A_1 \in \mathfrak{A}_1$ and $\pi(A_2) = Z(0 \oplus A_2)W$ for all $A_2 \in \mathfrak{A}_2$.
\end{lem2}
\begin{proof}
See Lemma 5.6 of \cite{BDS}.
\end{proof}
\begin{proof}[Proof of Theorem \ref{ultraproduct}]
For each $k \in \mathbb{N}$ define the state $\psi_k : \mathfrak{A}_k \oplus \mathfrak{B} \to \mathbb{C}$ by $\psi_k(A \oplus B) = \frac{1}{2}(\tau_k(A) + \varphi(B))$ for all $A \in \mathfrak{A}_k$ and $B \in \mathfrak{B}$ and define the state $\psi : \mathfrak{A} \oplus \mathfrak{B} \to \mathbb{C}$ by $\psi(A \oplus B) = \frac{1}{2}(\tau(A) + \varphi(B))$ for all $A \in \mathfrak{A}$ and $B \in \mathfrak{B}$.  By the first assumption of the theorem, it is clear that
\[
\limsup_{k\to\infty}\left\|p\left(X_1^{(k)} \oplus 0, \ldots, X^{(k)}_n\oplus 0, 0 \oplus Y_1, \ldots, 0 \oplus Y_m\right)\right\|_{\mathfrak{A}_k \oplus \mathfrak{B}}
\]
is
\[
\left\|p(X_1\oplus 0, \ldots, X_n\oplus 0, 0 \oplus Y_1, \ldots, 0 \oplus Y_m)\right\|_{\mathfrak{A} \oplus \mathfrak{B}}
\]
for all $p \in \mathbb{C} \langle t_1, \ldots, t_{n+m}\rangle$.
\par
Let $\ell_1$ and $\ell_2$ be two isometries that generated the Cuntz algebra.  Since $\mathcal{O}_2$ is exact, the fifth equivalence of Theorem \ref{exactnessTheorem} implies that the $\limsup_{k\to\infty}$ of
\[
\left\|p\left(\left(X_1^{(k)} \oplus 0\right)\otimes I, \ldots, \left(X^{(k)}_n\oplus 0\right)\otimes I, (0 \oplus Y_1)\otimes I, \ldots, (0 \oplus Y_m)\otimes I, (I \oplus I) \otimes \ell_1, (I \oplus I) \otimes \ell_2\right)\right\|_{(\mathfrak{A}_k \oplus \mathfrak{B}) \otimes_{\min} \mathcal{O}_2}
\]
is
\[
\left\|p\left((X_1 \oplus 0)\otimes I, \ldots, (X_n\oplus 0)\otimes I, (0 \oplus Y_1)\otimes I, \ldots, (0 \oplus Y_m)\otimes I, (I \oplus I) \otimes \ell_1, (I \oplus I) \otimes \ell_2\right)\right\|_{(\mathfrak{A} \oplus \mathfrak{B}) \otimes_{\min} \mathcal{O}_2} 
\]
for all $p \in \mathbb{C} \langle t_1, \ldots, t_{n+m+2}\rangle$.  
\par
Let $\sigma$ be the faithful state from Lemma \ref{directsumlemma}.  Therefore
\[
(\psi_k \otimes \sigma)\left(p\left(\left(X_1^{(k)} \oplus 0\right)\otimes I, \ldots, \left(X^{(k)}_n\oplus 0\right)\otimes I, (0 \oplus Y_1)\otimes I, \ldots, (0 \oplus Y_m)\otimes I, (I \oplus I) \otimes \ell_1, (I \oplus I) \otimes \ell_2\right)\right)
\]
converges to
\[
(\psi \otimes \sigma) \left( p\left((X_1 \oplus 0)\otimes I, \ldots, (X_n\oplus 0)\otimes I, (0 \oplus Y_1)\otimes I, \ldots, (0 \oplus Y_m)\otimes I, (I \oplus I) \otimes \ell_1, (I \oplus I) \otimes \ell_2\right)  \right)
\]
as $k \to \infty$ for all $p \in \mathbb{C} \langle t_1, \ldots, t_{n+m+2}\rangle$ by the second assumption of the theorem, the structure of the $\psi_k$'s, and the structure of the tensor products of states.
\par
Let $\mathfrak{C} = \mathcal{M}_2(\mathbb{C})$, let $\rho$ be the faithful normalized trace on $\mathfrak{C}$, and let
\[
U := \left[  \begin{array}{cc} 0&1\\1&0  \end{array} \right].
\]
Since $\mathfrak{C}$ can be generated by a single operator free from $\rho$, Lemma $\ref{ultraexact}$ implies if $p \in \mathbb{C} \langle t_1, \ldots, t_{n+m+3}\rangle$, $T \in\mathcal{M}_2(\mathbb{C})$,
\begin{eqnarray} 
q_{p}\!\!\!\!&:=&\!\!\!\! p\left((X_1 \oplus 0)\otimes I, \ldots, (X_n\oplus 0)\otimes I, (0 \oplus Y_1)\otimes I, \ldots, (0 \oplus Y_m)\otimes I, (I \oplus I) \otimes \ell_1, (I \oplus I) \otimes \ell_2, T\right)\nonumber\\
&\in&\!\!\!\! ((\mathfrak{A} \oplus \mathfrak{B}) \otimes_{\min} \mathcal{O}_2, \psi \otimes \sigma) \ast (\mathcal{M}_2(\mathbb{C}), \rho), \nonumber
\end{eqnarray}
and
\begin{eqnarray} 
q_{p,k}\!\!\!\!&:=&\!\!\!\! p\left(\left(X_1^{(k)} \oplus 0\right)\otimes I, \ldots, \left(X^{(k)}_n\oplus 0\right)\otimes I, (0 \oplus Y_1)\otimes I, \ldots, (0 \oplus Y_m)\otimes I, (I \oplus I) \otimes \ell_1, (I \oplus I) \otimes \ell_2, T\right) \nonumber\\
&\in&\!\!\!\! ((\mathfrak{A}_k \oplus \mathfrak{B}) \otimes_{\min} \mathcal{O}_2, \psi_k \otimes \sigma) \ast (\mathcal{M}_2(\mathbb{C}),\rho) \nonumber
\end{eqnarray}
for all $k \in \mathbb{N}$ then
\[
\lim_{k\to\infty} \left\| q_{p,k}\right\|_{((\mathfrak{A}_k \oplus \mathfrak{B}) \otimes_{\min} \mathcal{O}_2, \psi_k \otimes \sigma) \ast (\mathcal{M}_2(\mathbb{C}),\rho)} = \left\| q_p\right\|_{((\mathfrak{A} \oplus \mathfrak{B}) \otimes_{\min} \mathcal{O}_2, \psi \otimes \sigma) \ast (\mathcal{M}_2(\mathbb{C}), \rho)}.
\]
Therefore the result clearly follows by the embedding properties given by Lemma \ref{directsumlemma}.
\end{proof}
Combining Theorem \ref{main}, Theorem \ref{ultraproduct}, and Remarks \ref{3.3}, we have the following analog of Lemma \ref{tensorImbed} for reduced free products.
\begin{cor2}
\label{freeImbed}
For each $k \in \mathbb{N}$ let $\left\{X_i^{(k)}\right\}^n_{i=1}$ be generators for a non-commutative probability space $(\mathfrak{A}_k, \tau_k)$.  Let $\{X_i\}^n_{i=1}$ be generators for a non-commutative probability space $(\mathfrak{A}, \tau)$ and let $\{Y_i\}^m_{i=1}$ be generators for a non-commutative probability space $(\mathfrak{B}, \varphi)$.   Suppose that 
\begin{enumerate}
	\item $\limsup_{k\to\infty}\left\|q\left(X_1^{(k)}, \ldots, X_n^{(k)}\right)\right\|_{\mathfrak{A}_k} = \left\|q(X_1,\ldots, X_n)\right\|_{\mathfrak{A}}$ for all $q \in \mathbb{C} \langle t_1, \ldots, t_n\rangle$, and
	\item $\lim_{k\to\infty} \tau_k\left(q\left(\left(X_1^{(k)}, \ldots, X_n^{(k)}\right)\right)\right) = \tau(q(X_1,\ldots,X_n))$ for all $q \in \mathbb{C} \langle t_1, \ldots, t_n\rangle$.
\end{enumerate}
Let $\mathfrak{D}$ be the unital C$^*$-subalgebra of $\prod_{k\geq 1}\mathfrak{A}_k$ generated by $\left\{\left(X_i^{(k)}\right)_{k\geq1}\right\}^n_{i=1}$ and let $\mathfrak{J} := \mathfrak{D} \cap \left(\bigoplus_{k\geq1} \mathfrak{A}_k\right)$.  Then $\mathfrak{J}$ is an ideal of $\mathfrak{D}$ such that $\mathfrak{D}/\mathfrak{J} \simeq \mathfrak{A}$.
\par
Let $\sigma: \mathfrak{B} \to \mathcal{B}(\mathcal{K})$ be the GNS representation of $\varphi$ with unit cyclic vector $\eta$, let $\pi_0 : \mathfrak{A} \to \mathcal{B}(\mathcal{H}_{0})$ be the GNS representation of $\tau$ with unit cyclic vector $\xi$, let $\pi_1 : \mathfrak{D} \to \mathcal{B}(\mathcal{H}_1)$ be a faithful, unital representation, let $q : \mathfrak{D} \to \mathfrak{A}$ be the canonical quotient map, and let $\pi := (\pi_0 \circ q) \oplus \pi_1 : \mathfrak{D} \to \mathcal{B}(\mathcal{H}_0 \oplus \mathcal{H}_1)$ which is a faithful, unital representation.  Then there exists an injective $^*$-homomorphism
\[
\Phi : \frac{(\mathfrak{D}, \pi, \xi) \ast (\mathfrak{B}, \sigma, \eta)}{\langle \mathfrak{J}\rangle_{\mathfrak{D} \ast \mathfrak{B}}} \to \frac{\prod_{k\geq 1} ((\mathfrak{A}_k, \tau_k) \ast (\mathfrak{B}, \varphi))}{\bigoplus_{k\geq 1} ((\mathfrak{A}_k, \tau_k) \ast (\mathfrak{B}, \varphi))}
\]
such that
\[
\Phi\left(\left(X_i^{(k)}\right)_{k\geq1} +  \langle \mathfrak{J}\rangle_{\mathfrak{D} \ast \mathfrak{B}} \right) = \left(X_i^{(k)}\right)_{k\geq1} + \bigoplus_{k\geq 1} ((\mathfrak{A}_k, \tau_k) \ast (\mathfrak{B}, \varphi))
\]
for all $i \in \{1,\ldots, n\}$ and
\[
\Phi(B + \langle \mathfrak{J}\rangle_{\mathfrak{D} \ast \mathfrak{B}}) = (B)_{k\geq1} + \bigoplus_{k\geq 1} ((\mathfrak{A}_k, \tau_k) \ast (\mathfrak{B}, \varphi))
\]
for all $B \in \mathfrak{B}$.
\end{cor2}
Recently in \cite{Pi2}, Pisier has developed a direct proof of Theorem \ref{ultraproduct} using the non-commutative Khintchine inequalities developed in \cite{RX}.  In fact, Pisier's result holds when the $Y$ variables are allowed to vary (see below).  We remark that the proof of Theorem \ref{ultraproduct} generalizes to this setting as well.
\begin{thm2}[Pisier]
For each $k \in \mathbb{N}$ let $\left\{X_i^{(k)}\right\}^n_{i=1}$ be generators for a non-commutative probability space $(\mathfrak{A}_k, \tau_k)$ and let $\left\{Y_i^{(k)}\right\}^m_{i=1}$ be generators for a non-commutative probability space $(\mathfrak{B}_k, \varphi_k)$.  Let $\{X_i\}^n_{i=1}$ be generators for a non-commutative probability space $(\mathfrak{A}, \tau)$ and let $\{Y_i\}^m_{i=1}$ be generators for a non-commutative probability space $(\mathfrak{B}, \varphi)$.   Suppose that 
\begin{enumerate}
	\item $\limsup_{k\to\infty}\left\|q\left(X_1^{(k)}, \ldots, X_n^{(k)}\right)\right\|_{\mathfrak{A}_k} = \left\|q(X_1,\ldots, X_n)\right\|_{\mathfrak{A}}$ for all $q \in \mathbb{C} \langle t_1, \ldots, t_n\rangle$, 
		\item $\limsup_{k\to\infty}\left\|q\left(Y_1^{(k)}, \ldots, Y_m^{(k)}\right)\right\|_{\mathfrak{B}_k} = \left\|q(Y_1,\ldots, Y_m)\right\|_{\mathfrak{B}}$ for all $q \in \mathbb{C} \langle t_1, \ldots, t_m\rangle$, 
	\item $\lim_{k\to\infty} \tau_k\left(q\left(\left(X_1^{(k)}, \ldots, X_n^{(k)}\right)\right)\right) = \tau(q(X_1,\ldots,X_n))$ for all $q \in \mathbb{C} \langle t_1, \ldots, t_n\rangle$, and
	\item $\lim_{k\to\infty} \varphi_k\left(q\left(\left(Y_1^{(k)}, \ldots, Y_m^{(k)}\right)\right)\right) = \varphi(q(Y_1,\ldots,Y_m))$ for all $q \in \mathbb{C} \langle t_1, \ldots, t_m\rangle$.
\end{enumerate}
Then
\[
\lim_{k\to\infty} \left\|p\left(X_1^{(k)}, \ldots, X^{(k)}_n, Y^{(k)}_1, \ldots, Y^{(k)}_m\right)\right\|_{(\mathfrak{A}_k, \tau_k) \ast (\mathfrak{B}_k, \varphi_k)} = \left\|p(X_1, \ldots, X_n, Y_1, \ldots, Y_m)\right\|_{(\mathfrak{A}, \tau) \ast (\mathfrak{B}, \varphi)}
\]
for all $p \in \mathbb{C} \langle t_1, \ldots, t_{n+m}\rangle$.
\end{thm2}
\begin{proof}
For each $k \in \mathbb{N}$ define the state $\psi_k : \mathfrak{A}_k \oplus \mathfrak{B}_k \to \mathbb{C}$ by $\psi_k(A \oplus B) = \frac{1}{2}(\tau_k(A) + \varphi_k(B))$ for all $A \in \mathfrak{A}_k$ and $B \in \mathfrak{B}_k$ and define the state $\psi : \mathfrak{A} \oplus \mathfrak{B} \to \mathbb{C}$ by $\psi(A \oplus B) = \frac{1}{2}(\tau(A) + \varphi(B))$ for all $A \in \mathfrak{A}$ and $B \in \mathfrak{B}$.  By the first assumption of the theorem, it is clear that
\[
\limsup_{k\to\infty}\left\|p\left(X_1^{(k)} \oplus 0, \ldots, X^{(k)}_n\oplus 0, 0 \oplus Y^{(k)}_1, \ldots, 0 \oplus Y^{(k)}_m\right)\right\|_{\mathfrak{A}_k \oplus \mathfrak{B}_k}
\]
is
\[
\left\|p(X_1\oplus 0, \ldots, X_n\oplus 0, 0 \oplus Y_1, \ldots, 0 \oplus Y_m)\right\|_{\mathfrak{A} \oplus \mathfrak{B}}
\]
for all $p \in \mathbb{C} \langle t_1, \ldots, t_{n+m}\rangle$.  Since
\[
\lim_{k\to\infty}\psi_k\left(p\left(X_1^{(k)} \oplus 0, \ldots, X^{(k)}_n\oplus 0, 0 \oplus Y^{(k)}_1, \ldots, 0 \oplus Y^{(k)}_m\right)\right)
\]
is
\[
\psi(p(X_1\oplus 0, \ldots, X_n\oplus 0, 0 \oplus Y_1, \ldots, 0 \oplus Y_m))
\]
for all $p \in \mathbb{C} \langle t_1, \ldots, t_{n+m}\rangle$, the proof now follows the remainder of the proof of Theorem \ref{ultraproduct}.
\end{proof}
We note that Lemma \ref{oneinequal} and Remarks \ref{3.3} generalize trivially to the above setting.

\section{Nuclearly Embeddings with States}
\label{sect:MR}

\subsection{Extending Completely Positive Maps with States}

In this section we will develop the ability to extend unital, completely positive maps on C$^*$-algebras in a state-preserving manner.  This will enable us to extend the classical result that the free product of unital, completely positive maps can be taken when the GNS representations of the C$^*$-algebras are faithful.  This will allow us to show $(\mathfrak{A}_1, \pi_1, \xi_1) \ast (\mathfrak{A}_2, \pi_2,\xi_2) \simeq (\mathfrak{A}_1, \pi'_1, \xi'_1) \ast (\mathfrak{A}_2, \pi'_2, \xi'_2)$ provided that $\pi_i$ and $\pi'_i$ are faithful, unital representations and the vector state defined by $\xi_i$ equals the vector state defined by $\xi'_i$ for all $i \in \{1,2\}$.  The ability to extend unital, completely positive maps in a state-preserving manner will be of use in the next section where the modification of the second equivalence of Theorem \ref{exactnessTheorem} to reduced free products is obtained.
\par
 First we state the commonly known result for unital, completely positive maps between the reduced free products of C$^*$-algebra with faithful GNS representations (where the reduced free product of more than two C$^*$-algebras can be taken recursively as the process is associative).
\begin{thm}[Blanchard, Dykema]
\label{oldfpocpm}
For $i\in \{1,\ldots, n\}$ let $\mathcal{H}_i$ and $\mathcal{H}'_i$ be Hilbert spaces and let $\xi_i \in \mathcal{H}_i$ and $\xi'_i \in \mathcal{H}'_i$ be unit vectors.  Suppose that there exists unital, completely positive $\psi_i : \mathcal{B}(\mathcal{H}'_i) \to \mathcal{B}(\mathcal{H}_i)$ such that 
\[
\langle \psi_i(T)\xi_i, \xi_i\rangle_{\mathcal{H}_i} = \langle T\xi'_i, \xi'_i\rangle_{\mathcal{H}'_i}
\]
for all $T \in \mathcal{B}(\mathcal{H}'_i)$. Then there exists a unital, completely positive map
\[
\Psi: \ast_{i=1}^n (\mathcal{B}(\mathcal{H}'_i), Id, \xi'_i) \to \ast_{i=1}^n (\mathcal{B}(\mathcal{H}_i), Id, \xi_i)
\]
such that $\Psi(T'_i) = \psi_i(T'_i)$ for all $T'_i \in \mathcal{B}(\mathcal{H}'_i)$ and
\[
\Psi(T'_{1}T'_{2} \cdots T'_{m}) = \Psi(T'_{1})\Psi(T'_{2})\cdots\Psi(T'_{m})
\]
whenever $m \geq 1$, $T'_{j} \in \mathcal{B}(\mathcal{H}'_{i_j})$, $\langle T'_j \xi'_{i_j}, \xi'_{i_j}\rangle_{\mathcal{H}'_{i_j}}=0$ for all $j \in \{1,\ldots, m\}$, and $i_j \neq i_{j+1}$ for all $j \in \{1,\ldots, m-1\}$.  We denote $\Psi$ by $\ast^n_{i=1} \psi_i$.
\end{thm}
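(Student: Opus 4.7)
The plan is to deduce this theorem directly from the classical Blanchard--Dykema construction of the free product of unital completely positive maps (see Theorem 4.8.2 of \cite{BO}), after verifying that the vector states that appear here automatically have faithful GNS representations, so the hypotheses of that theorem are met in the form it is usually stated.

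The key preliminary observation is that for any Hilbert space $\mathcal{H}$ and unit vector $\xi \in \mathcal{H}$, the GNS representation of the vector state $\omega_\xi(T) = \langle T\xi, \xi\rangle_{\mathcal{H}}$ on $\mathcal{B}(\mathcal{H})$ is faithful. Indeed, the left kernel of $\omega_\xi$ is $\{T \in \mathcal{B}(\mathcal{H}) : T\xi = 0\}$, and the map $T + N_{\omega_\xi} \mapsto T\xi$ gives a canonical isometric isomorphism from the GNS Hilbert space onto $\mathcal{H}$ (the image is dense because $\mathcal{B}(\mathcal{H})$ acts irreducibly on $\mathcal{H}$). Under this identification the GNS representation coincides with the identity representation of $\mathcal{B}(\mathcal{H})$ on $\mathcal{H}$, so it is faithful. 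Moreover, this same identification shows that the object $(\mathcal{B}(\mathcal{H}_i), Id, \xi_i)$ appearing in the statement (defined in the sense of Notation \ref{freeprod}) is canonically isomorphic to the usual Blanchard--Dykema reduced free product $(\mathcal{B}(\mathcal{H}_i), \omega_{\xi_i})$, so one may pass freely between the two pictures.

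With this in hand, the hypothesis $\langle \psi_i(T)\xi_i, \xi_i\rangle_{\mathcal{H}_i} = \langle T\xi'_i, \xi'_i\rangle_{\mathcal{H}'_i}$ is precisely the condition $\omega_{\xi_i} \circ \psi_i = \omega_{\xi'_i}$, so the existence of $\Psi$ together with the claimed multiplicativity property follows directly. If a self-contained argument is preferred, one dilates each $\psi_i$ via Stinespring as $\psi_i(T) = V_i^* \pi_i(T) V_i$, observes that state preservation forces $V_i(\mathcal{H}_i \ominus \mathbb{C}\xi_i) \subseteq \mathcal{K}_i \ominus \mathbb{C}V_i\xi_i$, assembles a canonical isometry $\widetilde{V}$ from the Hilbert space free product $\ast_i(\mathcal{H}_i, \xi_i)$ into $\ast_i(\mathcal{K}_i, V_i\xi_i)$ by sending $\eta_1 \otimes \cdots \otimes \eta_k \mapsto V_{i_1}\eta_1 \otimes \cdots \otimes V_{i_k}\eta_k$, and sets $\Psi(T) = \widetilde{V}^*\widehat{\pi}(T)\widetilde{V}$, where $\widehat{\pi}$ is the $^*$-homomorphism from $\ast_i \mathcal{B}(\mathcal{H}'_i)$ into $\mathcal{B}(\ast_i(\mathcal{K}_i, V_i\xi_i))$ obtained by letting each $\pi_i(T)$ act through the free-product representation on the right-hand Hilbert space. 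The main technical point, namely checking that the range of $\Psi$ lands in $\ast_i(\mathcal{B}(\mathcal{H}_i), Id, \xi_i)$ and that $\Psi$ is multiplicative on alternating centered strings, reduces to a direct computation on the free product Hilbert space using the fact that centered elements annihilate the distinguished vector and produce pure tensors of the expected alternating type --- essentially the same bookkeeping that underlies the analysis in Section 2.2.
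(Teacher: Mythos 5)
Your proposal is correct and matches the paper's treatment: the paper's entire proof is a citation to Theorem 2.2 of Blanchard--Dykema, which is exactly the reduction you carry out (your preliminary observation that the GNS representation of a vector state on $\mathcal{B}(\mathcal{H})$ is the identity representation on $\mathcal{H}$, hence faithful, is the right justification for why that theorem applies verbatim here, and the optional Stinespring/dilation sketch is the standard proof of that cited result). The only quibble is the pointer to Theorem 4.8.2 of the Brown--Ozawa book, which is the Toeplitz--Pimsner embedding rather than the free product of u.c.p.\ maps; the intended reference is Blanchard--Dykema's Theorem 2.2.
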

\begin{proof}  
See Theorem 2.2 of \cite{BD}.
\end{proof}
\par
Next we endeavour to extend the above result to the reduced free products of C$^*$-algebras where we do not require that the states have faithful GNS constructions.  To do this, we will apply Theorem \ref{ecpmws} to the above result.  We begin the proof of Theorem \ref{ecpmws} in the case that $\varphi$ is a $^*$-homomorphism and then appeal to Stinespring's Theorem.
\begin{lem}
\label{extendlemma}
Let $\mathfrak{A} \subseteq \mathcal{B}(\mathcal{H})$ be a C$^*$-algebra, let $\mathcal{K}$ be a Hilbert space, let $\xi \in \mathcal{H}$ and $\eta \in \mathcal{K}$ be unit vectors, and let $\pi : \mathfrak{A} \to \mathcal{B}(\mathcal{K})$ be a $^*$-homomorphism.  Suppose further that $\xi \in \overline{\mathfrak{A}\mathcal{H}}$ and $\langle A\xi,\xi\rangle_\mathcal{H} = \langle \pi(A)\eta,\eta\rangle_\mathcal{K}$ for all $A \in \mathfrak{A}$.  Then there exists a contractive, completely positive map $\psi : \mathcal{B}(\mathcal{H}) \to \mathcal{B}(\mathcal{K})$ that extends $\pi$ such that $\langle T\xi,\xi\rangle_\mathcal{H} = \langle \psi(T)\eta, \eta\rangle_\mathcal{K}$ for all $T \in \mathcal{B}(\mathcal{H})$.  More specifically $\psi : \mathcal{B}(\mathcal{H}) \to \mathcal{B}(\mathcal{K}_0) \oplus \mathcal{B}(\mathcal{K}_0^\bot) \subseteq \mathcal{B}(\mathcal{K})$ where $\mathcal{K}_0 = \overline{\pi(\mathfrak{A})\eta}$.
\end{lem}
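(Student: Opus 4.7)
The plan is to decompose $\mathcal{K} = \mathcal{K}_0 \oplus \mathcal{K}_0^\bot$ and build $\psi$ block-diagonally: on $\mathcal{K}_0$ the extension will be a compression by an intertwining isometry, while on $\mathcal{K}_0^\bot$ it will be produced by Arveson's extension theorem applied to $A \mapsto \pi(A)|_{\mathcal{K}_0^\bot}$.

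First I would set $\mathcal{H}_0 := \overline{\mathfrak{A}\xi}$ and verify that $\xi \in \mathcal{H}_0$ and $\eta \in \mathcal{K}_0$. Choosing a C$^*$-bounded approximate identity $\{e_\lambda\}$ for $\mathfrak{A}$, the hypothesis $\xi \in \overline{\mathfrak{A}\mathcal{H}}$ combined with the standard three-term estimate yields $e_\lambda \xi \to \xi$, and then the state-matching hypothesis gives
\[
\|\pi(e_\lambda)\eta - \eta\|_\mathcal{K}^2 = \langle e_\lambda^* e_\lambda \xi,\xi\rangle_\mathcal{H} - 2\,\mathrm{Re}\langle e_\lambda\xi,\xi\rangle_\mathcal{H} + 1 \to 0,
\]
so $\eta \in \mathcal{K}_0$. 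Since $\|A\xi\|_\mathcal{H}^2 = \langle A^*A\xi,\xi\rangle_\mathcal{H} = \langle \pi(A^*A)\eta,\eta\rangle_\mathcal{K} = \|\pi(A)\eta\|_\mathcal{K}^2$, the assignment $A\xi \mapsto \pi(A)\eta$ is well-defined and isometric, and extends by continuity to a unitary $U : \mathcal{H}_0 \to \mathcal{K}_0$ satisfying $U\xi = \eta$ and $U A|_{\mathcal{H}_0} U^* = \pi(A)|_{\mathcal{K}_0}$ for every $A \in \mathfrak{A}$ (using that $\mathcal{H}_0$ is $\mathfrak{A}$-invariant).

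Next I would note that $\mathcal{K}_0$ reduces $\pi(\mathfrak{A})$: for $\zeta \in \mathcal{K}_0^\bot$ and $A, B \in \mathfrak{A}$, one has $\langle \pi(A)\zeta, \pi(B)\eta\rangle_\mathcal{K} = \langle \zeta, \pi(A^*B)\eta\rangle_\mathcal{K} = 0$, and the self-adjointness of $\mathfrak{A}$ takes care of $A^*$ symmetrically. Define $\psi^{(0)} : \mathcal{B}(\mathcal{H}) \to \mathcal{B}(\mathcal{K}_0)$ by
\[
\psi^{(0)}(T) := V^* T V, \qquad V := \iota_{\mathcal{H}_0} \circ U^* : \mathcal{K}_0 \to \mathcal{H},
\]
where $\iota_{\mathcal{H}_0} : \mathcal{H}_0 \hookrightarrow \mathcal{H}$ is the inclusion. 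Being a compression by an isometry, $\psi^{(0)}$ is contractive and completely positive; and one checks $\psi^{(0)}(A) = U A|_{\mathcal{H}_0} U^* = \pi(A)|_{\mathcal{K}_0}$ for all $A \in \mathfrak{A}$ by $\mathfrak{A}$-invariance of $\mathcal{H}_0$, and $\langle \psi^{(0)}(T)\eta,\eta\rangle_\mathcal{K} = \langle T\xi,\xi\rangle_\mathcal{H}$ because $V\eta = \xi$. For the complementary block, Arveson's extension theorem applied to the completely contractive, completely positive $^*$-homomorphism $A \mapsto \pi(A)|_{\mathcal{K}_0^\bot}$ (first extending to the unital operator system $\mathfrak{A} + \mathbb{C}I_\mathcal{H}$ in the usual way if $\mathfrak{A}$ is non-unital) produces a contractive, completely positive $\psi^{(1)} : \mathcal{B}(\mathcal{H}) \to \mathcal{B}(\mathcal{K}_0^\bot)$ with $\psi^{(1)}(A) = \pi(A)|_{\mathcal{K}_0^\bot}$ for all $A \in \mathfrak{A}$.

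Setting $\psi(T) := \psi^{(0)}(T) \oplus \psi^{(1)}(T) \in \mathcal{B}(\mathcal{K}_0) \oplus \mathcal{B}(\mathcal{K}_0^\bot) \subseteq \mathcal{B}(\mathcal{K})$, the map $\psi$ is contractive and completely positive (an orthogonal direct sum of CP maps is CP), extends $\pi$ (because $\mathcal{K}_0$ reduces $\pi(\mathfrak{A})$ so the two blocks glue back to $\pi(A)$), and preserves the vector state (since $\eta \in \mathcal{K}_0$, only the $\psi^{(0)}$ block contributes). The only genuine obstacle is confirming $\eta \in \mathcal{K}_0$, which is precisely where the hypothesis $\xi \in \overline{\mathfrak{A}\mathcal{H}}$ enters, via the approximate-identity computation above; once this is in hand, the construction is essentially forced by the hint built into the statement.
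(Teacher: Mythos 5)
Your proposal is correct and follows essentially the same route as the paper: form $\mathcal{H}_0 = \overline{\mathfrak{A}\xi}$ and $\mathcal{K}_0 = \overline{\pi(\mathfrak{A})\eta}$, build the intertwining unitary $A\xi \mapsto \pi(A)\eta$, compress on the $\mathcal{K}_0$ block, and apply Arveson's extension theorem on the $\mathcal{K}_0^\bot$ block. The only (cosmetic) difference is that you verify $\eta \in \mathcal{K}_0$ by a direct norm computation with the approximate identity, whereas the paper argues via convergence of $\pi(E_\lambda)\eta$ to the projection of $\eta$ onto $\mathcal{K}_0$; both are valid.
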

\begin{proof}
Let $\mathcal{H}_0 := \overline{\mathfrak{A}\xi}$ and let $\mathcal{K}_0 := \overline{\pi(\mathfrak{A})\eta}$.  Define $V_0 : \mathfrak{A}\xi \to \mathcal{K}_0$ by $V_0(A\xi) = \pi(A)\eta$ for all $A \in \mathfrak{A}$.  To see that $V_0$ is well-defined, notice
\[
\left\|\pi(A)\eta\right\|^2_{\mathcal{K}} = \langle \pi(A^*A)\eta,\eta\rangle_\mathcal{K} = \langle A^*A\xi,\xi\rangle_\mathcal{H} = \left\|A\xi\right\|^2_\mathcal{H}.
\]
Hence $V_0$ is well-defined and extends to a unitary map $V : \mathcal{H}_0 \to \mathcal{K}_0$.
\par
We claim $V\xi = \eta$.  To see this, recall if $(E_\lambda)_\Lambda$ is a C$^*$-bounded approximate identity for $\mathfrak{A}$ then $E_\lambda \xi$ converges to the projection of $\xi$ onto $\overline{\mathfrak{A}\xi}$ and similarly $\pi(E_\lambda)\eta$ converges to the projection of $\eta$ onto $\overline{\pi(\mathfrak{A})\eta}$.  Since $\xi \in \overline{\mathfrak{A}\xi}$, $\lim_\Lambda E_\lambda \xi = \xi$.  Therefore $V\xi = \lim_{\Lambda} V(E_\lambda\xi) = \lim_\Lambda \pi(E_\lambda)\eta$.  However, since $V$ is isometric, $\xi$ and $\eta$ are both unit vectors, and $\pi(E_\lambda)\eta$ converges to the projection of $\eta$ onto $\overline{\pi(\mathfrak{A})\eta}$, $\eta \in \overline{\pi(\mathfrak{A})\eta}$ and $V\xi = \eta$.
\par
Since $\mathcal{K}_0$ is a reducing subspace for $\pi(\mathfrak{A})$, there exists $^*$-homomorphisms $\pi_0 : \mathfrak{A}\to\mathcal{B}(\mathcal{K}_0)$ and $\pi^\bot_0 : \mathfrak{A} \to \mathcal{B}(\mathcal{K}_0^\bot)$ such that, with respect to the Hilbert space decomposition $\mathcal{K} = \mathcal{K}_0 \oplus \mathcal{K}_0^\bot$,
\[
\pi(A) = \left[  \begin{array}{cc} \pi_0(A) & 0\\ 0 & \pi^\bot_0(A)  \end{array} \right]
\]
for all $A \in \mathfrak{A}$.
\par
Let $P \in \mathcal{B}(\mathcal{H}_0, \mathcal{H})$ be the canonical inclusion.  By Arveson's Extension Theorem (see Theorem 7.5 of \cite{Pa}) $\pi_0^\bot$ extends to a contractive, completely positive map $\psi^\bot_0 : \mathcal{B}(\mathcal{H}) \to \mathcal{B}(\mathcal{K}_0^\bot)$.  Define $\psi : \mathcal{B}(\mathcal{H}) \to \mathcal{B}(\mathcal{K})$ by
\[
\psi(T) = \left[  \begin{array}{cc} VP^*TPV^* & 0\\ 0 & \psi^\bot_0(A)  \end{array} \right]
\]
(with respect to the Hilbert space decomposition $\mathcal{K} = \mathcal{K}_0 \oplus \mathcal{K}_0^\bot$) for all $T \in \mathcal{B}(\mathcal{H})$.  Clearly $\psi$ is a contractive, completely positive being the direct sum of contractive, completely positive maps.  To see $\psi(A) = \pi(A)$ for all $A \in \mathfrak{A}$ notice $\psi^\bot_0(A) = \pi^\bot_0(A)$ for all $A \in \mathfrak{A}$ by construction.  Moreover
\[
VP^*APV^*(\pi(A_0)\eta) = VP^*AP(A_0\xi) = VP^*((AA_0)\xi) = \pi(AA_0)\eta = \pi_0(A)(\pi(A_0)\eta)
\]
for all $A_0 \in \mathfrak{A}$.  Therefore, as $\pi(\mathfrak{A})\eta$ is dense in $\mathcal{K}_0$, $VP^*APV^* = \pi_0(A)$ for all $A \in \mathfrak{A}$ so $\psi$ extends $\pi$.
\par
Lastly
\[
\langle \psi(T)\eta, \eta\rangle_\mathcal{K} = \langle TPV^*\eta, PV^*\eta\rangle_\mathcal{H} = \langle TP\xi, P\xi\rangle_\mathcal{H} = \langle T\xi,\xi\rangle_\mathcal{H}
\]
for all $T \in \mathcal{B}(\mathcal{H})$ as desired.  
\end{proof}
\begin{thm}
\label{ecpmws}
Let $\mathfrak{A} \subseteq \mathcal{B}(\mathcal{H})$ be a unital C$^*$-algebra (not necessarily with the same unit as $\mathcal{B}(\mathcal{H})$), let $\mathcal{K}$ be a Hilbert space, let $\xi \in \mathcal{H}$ and $\eta \in \mathcal{K}$ be unit vectors, and let $\varphi : \mathfrak{A} \to \mathcal{B}(\mathcal{K})$ be a unital, completely positive map.  Suppose further that $\xi \in \overline{\mathfrak{A}\mathcal{H}}$ and $\langle A\xi,\xi\rangle_\mathcal{H} = \langle \varphi(A)\eta,\eta\rangle_\mathcal{K}$ for all $A \in \mathfrak{A}$.  Then there exists a unital, completely positive map $\psi : \mathcal{B}(\mathcal{H}) \to \mathcal{B}(\mathcal{K})$ that extends $\varphi$ such that $\langle T\xi,\xi\rangle_\mathcal{H} = \langle \psi(T)\eta, \eta\rangle_\mathcal{K}$ for all $T \in \mathcal{B}(\mathcal{H})$.
\end{thm}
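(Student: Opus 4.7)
The plan is to reduce the general statement to the case already handled in Lemma \ref{extendlemma} (where $\varphi$ is a $^*$-homomorphism) via Stinespring's dilation theorem. Concretely, first I would apply Stinespring to the unital completely positive map $\varphi$, producing a Hilbert space $\widetilde{\mathcal{K}}$, a unital $^*$-homomorphism $\pi : \mathfrak{A} \to \mathcal{B}(\widetilde{\mathcal{K}})$, and an isometry $V : \mathcal{K} \to \widetilde{\mathcal{K}}$ with $\varphi(A) = V^* \pi(A) V$ for every $A \in \mathfrak{A}$. Set $\widetilde{\eta} := V\eta$, which is a unit vector in $\widetilde{\mathcal{K}}$. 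The compatibility hypothesis on $\xi$ and $\eta$ then transfers:
\[
\langle \pi(A)\widetilde{\eta}, \widetilde{\eta}\rangle_{\widetilde{\mathcal{K}}} = \langle V^*\pi(A)V\eta, \eta\rangle_{\mathcal{K}} = \langle \varphi(A)\eta, \eta\rangle_{\mathcal{K}} = \langle A\xi, \xi\rangle_{\mathcal{H}}
\]
for all $A \in \mathfrak{A}$.

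Next I would invoke Lemma \ref{extendlemma} applied to the $^*$-homomorphism $\pi : \mathfrak{A} \to \mathcal{B}(\widetilde{\mathcal{K}})$ together with the vectors $\xi$ and $\widetilde{\eta}$; the hypothesis $\xi \in \overline{\mathfrak{A}\mathcal{H}}$ is assumed and the state-matching condition has just been verified. This produces a contractive, completely positive map $\widetilde{\psi} : \mathcal{B}(\mathcal{H}) \to \mathcal{B}(\widetilde{\mathcal{K}})$ that extends $\pi$ and satisfies $\langle \widetilde{\psi}(T)\widetilde{\eta}, \widetilde{\eta}\rangle = \langle T\xi, \xi\rangle$ for every $T \in \mathcal{B}(\mathcal{H})$. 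Then I would define
\[
\psi : \mathcal{B}(\mathcal{H}) \to \mathcal{B}(\mathcal{K}), \qquad \psi(T) := V^* \widetilde{\psi}(T) V.
\]
Since $\widetilde{\psi}$ is completely positive and conjugation by $V$ preserves complete positivity, $\psi$ is completely positive; and since $V$ is an isometry and $\widetilde{\psi}$ is contractive, $\psi$ is contractive.

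The remaining verifications are short. For $A \in \mathfrak{A}$,
\[
\psi(A) = V^*\widetilde{\psi}(A)V = V^*\pi(A)V = \varphi(A),
\]
so $\psi$ extends $\varphi$. The state-preservation property follows from
\[
\langle \psi(T)\eta, \eta\rangle_{\mathcal{K}} = \langle \widetilde{\psi}(T)V\eta, V\eta\rangle_{\widetilde{\mathcal{K}}} = \langle \widetilde{\psi}(T)\widetilde{\eta}, \widetilde{\eta}\rangle_{\widetilde{\mathcal{K}}} = \langle T\xi, \xi\rangle_{\mathcal{H}}.
\]
For unitality, the subtle point is that $I_{\mathfrak{A}}$ may be a proper projection in $\mathcal{B}(\mathcal{H})$. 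However, since $\psi$ extends the unital map $\varphi$ we have $\psi(I_{\mathfrak{A}}) = I_{\mathcal{B}(\mathcal{K})}$; since $I_{\mathfrak{A}} \leq I_{\mathcal{B}(\mathcal{H})}$ and $\psi$ is positive, $I_{\mathcal{B}(\mathcal{K})} \leq \psi(I_{\mathcal{B}(\mathcal{H})})$; and since $\psi$ is contractive, $\psi(I_{\mathcal{B}(\mathcal{H})}) \leq I_{\mathcal{B}(\mathcal{K})}$. The two inequalities force $\psi(I_{\mathcal{B}(\mathcal{H})}) = I_{\mathcal{B}(\mathcal{K})}$. No real obstacle appears here; the whole argument is a standard Stinespring reduction, with the only minor care needed being the unital-versus-non-unital inclusion, which is handled by this order sandwich.
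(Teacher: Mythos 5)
Your proposal is correct and follows essentially the same route as the paper's own proof: Stinespring dilation of $\varphi$, transfer of the vector-state condition to $\widetilde{\eta}=V\eta$, application of Lemma \ref{extendlemma} to the dilated $^*$-homomorphism, compression by $V$, and the same order sandwich $I_{\mathcal{K}}=\psi(I_{\mathfrak{A}})\leq\psi(I_{\mathcal{B}(\mathcal{H})})\leq I_{\mathcal{K}}$ to get unitality. No gaps.
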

\begin{proof}  By Stinespring's Representation Theorem (see Theorem 4.1 of \cite{Pa}) there exists a Hilbert space $\mathcal{K}'$, a unital $^*$-homomorphism $\pi : \mathfrak{A} \to \mathcal{B}(\mathcal{K}')$, and an isometry $V : \mathcal{K} \to \mathcal{K}'$ such that $\varphi(A) = V^*\pi(A)V$ for all $A \in \mathfrak{A}$.  Let $\eta' = V\eta \in \mathcal{K}'$.  Since $V$ is an isometry $\eta'\in\mathcal{K}'$ is a unit vector.  Moreover for all $A \in \mathfrak{A}$
\[
\langle \pi(A)\eta',\eta'\rangle_{\mathcal{K}'} = \langle V^*\pi(A)V\eta,\eta\rangle_\mathcal{K} = \langle\varphi(A)\eta,\eta\rangle_\mathcal{K} = \langle A\xi,\xi\rangle_\mathcal{H}.
\]
Therefore, by Lemma \ref{extendlemma}, there exists a contractive, completely positive map $\psi' : \mathcal{B}(\mathcal{H})\to\mathcal{B}(\mathcal{K}')$ extending $\pi$ such that $\langle \psi'(T)\eta',\eta'\rangle_{\mathcal{K}'} = \langle T\xi,\xi\rangle_\mathcal{H}$ for all $T \in \mathcal{B}(\mathcal{H})$. 
\par
Define $\psi : \mathcal{B}(\mathcal{H}) \to \mathcal{B}(\mathcal{K})$ by $\psi(T) = V^*\psi'(T)V$ for all $T \in \mathcal{B}(\mathcal{H})$. Clearly $\psi$ is a contractive, completely positive map being the composition of contractive, completely positive maps. Moreover 
\[
\psi(A) = V^*\psi'(A)V = V^*\pi(A)V = \varphi(A)
\]
for all $A \in \mathfrak{A}$ so $\psi$ extends $\varphi$.  Hence
\[
I_\mathcal{K} = \varphi(I_\mathfrak{A}) = \psi(I_\mathfrak{A}) \leq \psi(I_\mathcal{H}) \leq \left\|\psi\right\| I_\mathcal{K} \leq I_\mathcal{K}
\]
so $\psi(I_\mathcal{H}) = I_\mathcal{K}$.  Lastly
\[
\langle \psi(T)\eta,\eta\rangle_\mathcal{K} = \langle \psi'(T)V\eta, V\eta\rangle_{\mathcal{K}'} = \langle \psi'(T)\eta',\eta'\rangle_{\mathcal{K}'} =  \langle T\xi,\xi\rangle_\mathcal{H}
\]
for all $T \in \mathcal{B}(\mathcal{H})$.  
\end{proof}
By using Theorem \ref{ecpmws} we are able to extend Theorem \ref{oldfpocpm}.
\begin{thm}
\label{fpocpm}
For $i \in \{1,\ldots, n\}$ let $\mathfrak{A}'_i$ and $\mathfrak{A}_i$ be unital C$^*$-algebras, let $\pi'_i :\mathfrak{A}'_i \to \mathcal{B}(\mathcal{H}'_i)$ and $\pi_i :\mathfrak{A}_i \to \mathcal{B}(\mathcal{H}_i)$ be faithful, unital representations, and let $\xi'_i \in \mathcal{H}'_i$ and $\xi_i \in \mathcal{H}_i$ be unit vectors.  Suppose that there exists unital, completely positive maps $\varphi_i :\mathfrak{A}'_i \to \mathfrak{A}_i$ such that 
\[
\langle\pi_i(\varphi_i(A))\xi_i, \xi_i\rangle_{\mathcal{H}_i} = \langle \pi'_i(A)\xi'_i, \xi'_i\rangle_{\mathcal{H}'_i}
\]
for all $A \in \mathfrak{A}'_i$. Then there exists a unital, completely positive map
\[
\Phi: \ast_{i=1}^n (\mathfrak{A'}_i, \pi'_i, \xi'_i) \to \ast_{i=1}^n (\mathfrak{A}_i, \pi_i, \xi_i)
\]
such that $\Phi(A'_i) = \varphi_i(A'_i)$ for all $A'_i \in \mathfrak{A}'_i$ and
\[
\Phi(A'_1A'_2 \cdots A'_m) = \Phi(A'_1)\Phi(A'_2)\cdots\Phi(A'_m)
\]
whenever $m \geq 1$, $A'_j \in \mathfrak{A}'_{i_j}$, $\langle \pi'_{i_j}(A_j)\xi'_{i_j}, \xi'_{i_j}\rangle_{\mathcal{H}'_{i_j}}=0$ for all $j \in \{1,\ldots, m\}$, and $i_j \neq i_{j+1}$ for all $j \in \{1,\ldots, m-1\}$.  We denote $\Phi$ by $\ast^n_{i=1} \varphi_i$.
\end{thm}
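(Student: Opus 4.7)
The plan is to reduce Theorem \ref{fpocpm} to Theorem \ref{oldfpocpm} (Blanchard--Dykema) by extending each $\varphi_i$ to a state-preserving unital completely positive map on all of $\mathcal{B}(\mathcal{H}'_i)$ using the extension result Theorem \ref{ecpmws}. First, viewing $\mathfrak{A}'_i$ inside $\mathcal{B}(\mathcal{H}'_i)$ via the faithful representation $\pi'_i$, the composition $\pi_i \circ \varphi_i \circ (\pi'_i)^{-1} : \pi'_i(\mathfrak{A}'_i) \to \mathcal{B}(\mathcal{H}_i)$ is a unital completely positive map. The hypothesis $\langle \pi_i(\varphi_i(A))\xi_i, \xi_i\rangle_{\mathcal{H}_i} = \langle \pi'_i(A)\xi'_i, \xi'_i\rangle_{\mathcal{H}'_i}$ says precisely that this map intertwines the vector states induced by $\xi'_i$ and $\xi_i$. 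Moreover, since $\pi'_i$ is unital, $\xi'_i = \pi'_i(I_{\mathfrak{A}'_i})\xi'_i \in \overline{\pi'_i(\mathfrak{A}'_i)\mathcal{H}'_i}$, so Theorem \ref{ecpmws} provides an extension to a unital completely positive map $\psi_i : \mathcal{B}(\mathcal{H}'_i) \to \mathcal{B}(\mathcal{H}_i)$ satisfying $\langle \psi_i(T)\xi_i, \xi_i\rangle_{\mathcal{H}_i} = \langle T \xi'_i, \xi'_i\rangle_{\mathcal{H}'_i}$ for all $T \in \mathcal{B}(\mathcal{H}'_i)$.

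Next, apply Theorem \ref{oldfpocpm} to the family $\{\psi_i\}^n_{i=1}$ to obtain a unital completely positive map
\[
\Psi : \ast_{i=1}^n (\mathcal{B}(\mathcal{H}'_i), Id, \xi'_i) \to \ast_{i=1}^n (\mathcal{B}(\mathcal{H}_i), Id, \xi_i)
\]
with the stated multiplicativity on alternating products of centered elements. By the construction in Notation \ref{freeprod}, $\ast_{i=1}^n (\mathfrak{A}'_i, \pi'_i, \xi'_i)$ sits canonically as a C$^*$-subalgebra of $\ast_{i=1}^n (\mathcal{B}(\mathcal{H}'_i), Id, \xi'_i)$, since the action of $\mathfrak{A}'_i$ on the free product Hilbert space defined via $\pi'_i$ coincides with the action of $\pi'_i(\mathfrak{A}'_i) \subseteq \mathcal{B}(\mathcal{H}'_i)$ via the identity representation; the analogous statement holds on the codomain side.

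I then define $\Phi$ as the restriction of $\Psi$ to $\ast_{i=1}^n (\mathfrak{A}'_i, \pi'_i, \xi'_i)$ and must verify that its image lies in $\ast_{i=1}^n (\mathfrak{A}_i, \pi_i, \xi_i)$. For this, note that on $\mathfrak{A}'_i$ the map $\psi_i$ restricts to $\pi_i \circ \varphi_i$, so $\Psi(A'_i) \in \pi_i(\mathfrak{A}_i)$ for all $A'_i \in \mathfrak{A}'_i$. Furthermore, the state-preservation $\langle \pi_i(\varphi_i(A'_i))\xi_i, \xi_i\rangle_{\mathcal{H}_i} = \langle \pi'_i(A'_i)\xi'_i, \xi'_i\rangle_{\mathcal{H}'_i}$ guarantees that centered elements map to centered elements. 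Therefore, on the dense $^*$-subalgebra spanned by $I$ together with all alternating products $A'_1 A'_2 \cdots A'_m$ with $A'_j \in \mathfrak{A}'_{i_j}$ centered and $i_j \neq i_{j+1}$, the multiplicativity clause of Theorem \ref{oldfpocpm} yields
\[
\Psi(A'_1 \cdots A'_m) = \psi_{i_1}(A'_1)\cdots \psi_{i_m}(A'_m) = \varphi_{i_1}(A'_1)\cdots \varphi_{i_m}(A'_m) \in \ast_{i=1}^n (\mathfrak{A}_i, \pi_i, \xi_i),
\]
an alternating product of centered elements of $\mathfrak{A}_{i_j}$. By continuity of $\Psi$, the restriction $\Phi$ takes values in $\ast_{i=1}^n (\mathfrak{A}_i, \pi_i, \xi_i)$, and the required properties $\Phi(A'_i) = \varphi_i(A'_i)$ and multiplicativity on centered alternating products are then inherited directly from $\Psi$.

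The conceptual obstacle is really just the state-preserving extension step, which is handled by Theorem \ref{ecpmws}; the subsequent application of Theorem \ref{oldfpocpm} and the bookkeeping for restriction are straightforward. There is no significant technical difficulty in the remaining verifications, since the free product construction is the same whether one works with $\mathfrak{A}'_i \subseteq \mathcal{B}(\mathcal{H}'_i)$ or with the ambient $\mathcal{B}(\mathcal{H}'_i)$.
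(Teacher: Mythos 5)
Your proposal is correct and follows essentially the same route as the paper's proof: extend each $\varphi_i$ to a state-preserving unital completely positive map $\psi_i : \mathcal{B}(\mathcal{H}'_i) \to \mathcal{B}(\mathcal{H}_i)$ via Theorem \ref{ecpmws}, apply Theorem \ref{oldfpocpm} to obtain $\Psi$, restrict to $\ast_{i=1}^n (\mathfrak{A}'_i, \pi'_i, \xi'_i)$, and verify that the image lands in $\ast_{i=1}^n (\mathfrak{A}_i, \pi_i, \xi_i)$ by checking on the dense span of the identity together with alternating products of centered elements. Your explicit verification of the hypothesis $\xi'_i \in \overline{\pi'_i(\mathfrak{A}'_i)\mathcal{H}'_i}$ (from unitality of $\pi'_i$) is a detail the paper leaves implicit.
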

\begin{proof}
By Theorem \ref{ecpmws} each $\varphi_i$ extends to a unital, completely positive map $\psi_i : \mathcal{B}(\mathcal{H}'_i) \to \mathcal{B}(\mathcal{H}_i)$ such that $\langle \psi_i(T)\xi_i, \xi_i\rangle_{\mathcal{H}_i} = \langle T\xi'_i, \xi'_i\rangle_{\mathcal{H}'_i}$ for all $T \in \mathcal{B}(\mathcal{H}'_i)$.  By Theorem \ref{oldfpocpm} there exists a unital, completely positive map
\[
\Psi: \ast_{i=1}^n (\mathcal{B}(\mathcal{H}'_i), Id, \xi'_i) \to \ast_{i=1}^n (\mathcal{B}(\mathcal{H}_i), Id, \xi_i)
\]
such that $\Psi(T'_i) = \psi_i(T'_i)$ for all $T'_i \in \mathcal{B}(\mathcal{H}'_i)$ and
\[
\Psi(T'_{1}T'_{2} \cdots T'_{m}) = \Psi(T'_{1})\Psi(T'_{2})\cdots\Psi(T'_{m})
\]
whenever $m \geq 1$, $T'_{j} \in \mathcal{B}(\mathcal{H}'_{i_j})$, $\langle T'_j \xi'_{i_j}, \xi'_{i_j}\rangle_{\mathcal{H}'_{i_j}}=0$ for all $j \in \{1,\ldots, m\}$, and $i_j \neq i_{j+1}$ for all $j \in \{1,\ldots, m-1\}$.  Since $\ast_{i=1}^n (\mathfrak{A'}_i, \pi'_i, \xi'_i) \subseteq \ast_{i=1}^n (\mathcal{B}(\mathcal{H}'_i), Id, \xi'_i)$, let $\Phi$ be the restriction of $\Psi$ to $\ast_{i=1}^n (\mathfrak{A'}_i, \pi'_i, \xi'_i)$.  Clearly $\Phi(A'_i) = \varphi_i(A'_i)$ for all $A'_i \in \mathfrak{A}'_i$ and
\[
\Phi(A'_1A'_2 \cdots A'_n) = \Phi(A'_1)\Phi(A'_2)\cdots\Phi(A'_m)
\]
whenever $m \geq 1$, $A'_j \in \mathfrak{A}'_{i_j}$, $\langle \pi'_{i_j}(A_j)\xi'_{i_j}, \xi'_{i_j}\rangle_{\mathcal{H}'_{i_j}}=0$ for all $j \in \{1,\ldots, m\}$, and $i_j \neq i_{j+1}$ for all $j \in \{1,\ldots, m-1\}$.
\par
To show that the image of $\Phi$ lies in $\ast_{i=1}^n (\mathfrak{A}_i, \pi_i, \xi_i)$, it suffices to show that $\Phi$ maps a dense subset of $\ast_{i=1}^n (\mathfrak{A}'_i, \pi'_i, \xi'_i)$ into $\ast_{i=1}^n (\mathfrak{A}_i, \pi_i, \xi_i)$.  Recall that $^*$-$Alg\left(\bigcup^n_{i=1} \mathfrak{A}'_i\right)$ is dense in $\ast_{i=1}^n (\mathfrak{A}'_i, \pi'_i, \xi'_i)$.  Moreover for all $j \in \{1,\ldots, n\}$ $\mathfrak{A}'_j = \mathbb{C} I_{\mathfrak{A}'_j} + (\mathfrak{A}'_j)^0$ where
\[
(\mathfrak{A}'_j)^0 := \left\{ A \in \mathfrak{A}'_j \, \mid \, \langle \pi'_{j}(A)\xi'_{j}, \xi'_{j}\rangle_{\mathcal{H}'_{j}}=0 \right\}.
\]
Therefore, since $I_{\mathfrak{A}'_j}$ is the unit of $\ast_{i=1}^n (\mathfrak{A}'_i, \pi'_i, \xi'_i)$ for all $j \in \{1,\ldots, n\}$, it is easy to see that the span of $I_{\ast_{i=1}^n (\mathfrak{A}'_i, \pi'_i, \xi'_i)}$ with
\[
\left\{ A'_{1} A'_{2} \cdots A'_{m} \, \mid \, m \geq 1, A'_j \in (\mathfrak{A}'_{i_j})^0, i_j \neq i_{j+1}  \right\}
\]
is dense in $\ast_{i=1}^n (\mathfrak{A}'_i, \pi'_i, \xi'_i)$.  Since $\Psi$ is unital and each $\pi'_i$ and $\pi_i$ is unital, 
\[
\Phi\left(I_{\ast_{i=1}^n (\mathfrak{A}'_i, \pi'_i, \xi'_i)}\right) = \Psi\left(I_{\ast_{i=1}^n (\mathcal{B}(\mathcal{H}'_i), Id, \xi'_i)}\right) = I_{\ast_{i=1}^n (\mathcal{B}(\mathcal{H}_i), Id, \xi_i)} = I_{\ast_{i=1}^n (\mathfrak{A}_i, \pi_i, \xi_i)} \in \ast_{i=1}^n (\mathfrak{A}_i, \pi_i, \xi_i).
\]
If $m \geq 1$, $A'_j \in (\mathfrak{A}'_{i_j})^0$, and $i_j \neq i_{j+1}$ for all $j \in \{1,\ldots, m-1\}$ then
\[
\Phi(A'_1A'_2 \cdots A'_m) = \Phi(A'_1)\Phi(A'_2)\cdots\Phi(A'_m) = \varphi_{i_1}(A'_1) \varphi_{i_2}(A'_2) \cdots \varphi_{i_m}(A'_m) \in \ast_{i=1}^n (\mathfrak{A}_i, \pi_i, \xi_i).
\]
Hence $\Phi$ maps a set whose span is dense in $\ast_{i=1}^n (\mathfrak{A}'_i, \pi'_i, \xi'_i)$ to a subset of $\ast_{i=1}^n (\mathfrak{A}_i, \pi_i, \xi_i)$.  Thus the result follows by the linearity and continuity of $\Phi$.  
\end{proof}
In \cite{BD} an example was given of unital C$^*$-algebras $\mathfrak{A}_i$, states $\varphi_i : \mathfrak{A}_i \to \mathbb{C}$ with faithful GNS representations, a unital C$^*$-algebra $\mathfrak{D}$ with a state $\psi$ that was not faithful but had a faithful GNS representation, and unital $^*$-homomorphism $\pi_i : \mathfrak{A}_i \to \mathfrak{D}$ such that the family $\{\pi_i(\mathfrak{A}_i)\}_{i \in \{1,2\}}$ was free with respect to $\psi$, $\psi \circ \pi_i = \varphi_i$ for all $i \in\{1,2\}$, and yet no $^*$-homomorphism $\pi : (\mathfrak{A}_1,\varphi_1) \ast (\mathfrak{A}_2,\varphi_2) \to \mathfrak{D}$ existed with the property that $\pi(A) = \pi_i(A)$ for all $A \in \mathfrak{A}_i$.  However, when $\mathfrak{D}$ is a C$^*$-algebra as described in Construction \ref{freeprod}, Theorem \ref{fpocpm} will allow us to construct such a $^*$-homomorphism.
\begin{thm}
If the unital, completely positive maps $\varphi_i : \mathfrak{A}'_i \to \mathfrak{A}_i$ in Theorem \ref{fpocpm} are $^*$-homomorphism, the resulting map $\Psi$ is a $^*$-homomorphism.
\end{thm}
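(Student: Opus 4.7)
The plan is to invoke Choi's theorem on the multiplicative domain of a unital completely positive map. Recall that for any u.c.p. map $\Phi : \mathfrak{C} \to \mathfrak{D}$, the set
\[
\mathcal{D}_\Phi := \{a \in \mathfrak{C} \, \mid \, \Phi(a^*a) = \Phi(a)^*\Phi(a) \mbox{ and } \Phi(aa^*) = \Phi(a)\Phi(a)^*\}
\]
is a norm-closed $^*$-subalgebra of $\mathfrak{C}$, and for every $a \in \mathcal{D}_\Phi$ and every $b \in \mathfrak{C}$ one has $\Phi(ab) = \Phi(a)\Phi(b)$ and $\Phi(ba) = \Phi(b)\Phi(a)$.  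In particular, the restriction $\Phi|_{\mathcal{D}_\Phi}$ is a $^*$-homomorphism.

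The map $\Phi$ produced in Theorem \ref{fpocpm} is u.c.p. and satisfies $\Phi|_{\mathfrak{A}'_i} = \varphi_i$ for each $i$.  Assuming each $\varphi_i$ is a $^*$-homomorphism, for every $A \in \mathfrak{A}'_i$ I would compute
\[
\Phi(A^*A) = \varphi_i(A^*A) = \varphi_i(A)^*\varphi_i(A) = \Phi(A)^*\Phi(A),
\]
and similarly $\Phi(AA^*) = \Phi(A)\Phi(A)^*$.  Hence $\mathfrak{A}'_i \subseteq \mathcal{D}_\Phi$ for every $i \in \{1,\ldots,n\}$.

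The conclusion is then automatic: since $\mathcal{D}_\Phi$ is a norm-closed $^*$-subalgebra of $\ast_{i=1}^n (\mathfrak{A}'_i, \pi'_i, \xi'_i)$ containing every $\mathfrak{A}'_i$, it must contain the C$^*$-subalgebra generated by $\bigcup_{i=1}^n \mathfrak{A}'_i$, which is all of $\ast_{i=1}^n (\mathfrak{A}'_i, \pi'_i, \xi'_i)$.  Thus $\mathcal{D}_\Phi = \ast_{i=1}^n (\mathfrak{A}'_i, \pi'_i, \xi'_i)$ and $\Phi$ is a $^*$-homomorphism.  There is no real obstacle to overcome here; the only substantive input is the standard multiplicative-domain fact, which is applicable precisely because the construction in Theorem \ref{fpocpm} yields a u.c.p.\ map on the whole reduced free product rather than merely a multiplicative map on an algebraic subset.
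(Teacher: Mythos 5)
Your proof is correct, but it takes a genuinely different route from the paper. The paper proves multiplicativity by hand: it reduces to alternating words $A_1\cdots A_m B_1 \cdots B_k$ with centered letters, writes $A_m B_1 = \lambda I_{\mathfrak{A}'_{i_m}} + C$ with $C \in (\mathfrak{A}'_{i_m})^0$, and runs an induction on $k$ using the factorization property $\Phi(A'_1\cdots A'_m)=\Phi(A'_1)\cdots\Phi(A'_m)$ on centered alternating products guaranteed by Theorem \ref{fpocpm}. Your argument instead appeals to Choi's multiplicative domain theorem (Theorem 3.18 of \cite{Pa}, which is already in the paper's bibliography): since $\Phi|_{\mathfrak{A}'_i}=\varphi_i$ and $A^*A, AA^*\in\mathfrak{A}'_i$ for $A\in\mathfrak{A}'_i$, each $\mathfrak{A}'_i$ lands in the multiplicative domain $\mathcal{D}_\Phi$, and since $\mathcal{D}_\Phi$ is a C$^*$-subalgebra and the $\mathfrak{A}'_i$ generate $\ast_{i=1}^n(\mathfrak{A}'_i,\pi'_i,\xi'_i)$ as a C$^*$-algebra, $\mathcal{D}_\Phi$ is everything. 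What your approach buys is brevity and generality: it uses only that $\Phi$ is u.c.p., restricts to a $^*$-homomorphism on each free factor, and that the factors generate --- the freeness-specific factorization property on centered words plays no role. What the paper's approach buys is self-containedness: it needs nothing beyond the explicit properties stated in Theorem \ref{fpocpm}, at the cost of a longer computation. Both are valid; yours is arguably the cleaner proof.
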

\begin{proof}
Since $\Phi$ is a unital, completely positive map, it suffices to show that $\Phi$ is multiplicative on a set with dense span in $\ast_{i=1}^n (\mathfrak{A}'_i, \pi'_i, \xi'_i)$.  Recall the span of $I_{\ast_{i=1}^n (\mathfrak{A}'_i, \pi'_i, \xi'_i)}$ with
\[
\left\{ A_{1} A_{2} \cdots A_{m} \, \mid \, m \geq 1, A'_j \in (\mathfrak{A}'_{i_j})^0, i_j \neq i_{j+1}  \right\}
\]
is dense in $\ast_{i=1}^n (\mathfrak{A}'_i, \pi'_i, \xi'_i)$.  Hence, to show that $\Phi$ is multiplicative, it suffices to show that
\[
\Phi(A_{1} A_{2} \cdots A_{m}B_{1} B_{2} \cdots B_{k}  ) = \Phi(A_{1} A_{2} \cdots A_{m}) \Phi(B_{1} B_{2} \cdots B_{k})
\]
whenever $m,k \geq 1$, $A_j \in (\mathfrak{A}'_{i_j})^0$,  $B_j \in (\mathfrak{A}'_{i'_j})^0$, $i_j \neq i_{j+1}$ for all $j \in \{1,\ldots, m-1\}$, and $i'_j \neq i'_{j+1}$ for all $j \in \{1,\ldots, k-1\}$.  However if $i_m \neq i'_1$ then
\[
\Phi(A_{1} A_{2} \cdots A_{m}B_{1} B_{2} \cdots B_{k}  ) = \Phi(A_{1}) \Phi(A_{2}) \cdots \Phi(A_{m})\Phi(B_{1}) \Phi(B_{2}) \cdots \Phi(B_{k}  ) = \Phi(A_{1} A_{2} \cdots A_{m}) \Phi(B_{1} B_{2} \cdots B_{k})
\]
by the properties of $\Phi$.  Hence we may assume that $i_m = i'_1$.  To complete the proof, we will proceed by induction on $k$.
\par
If $k = 1$ then we can write $A_mB_1 = \lambda I_{\mathfrak{A}'_{i_m}} + C$ where $\lambda \in \mathbb{C}$ and $C \in (\mathfrak{A}'_{i_m})^0$.  Thus
\begin{eqnarray} 
\Phi(A_{1} A_{2} \cdots A_{m-1}A_{m}B_{1}  )\!\!\!\!&=&\!\!\!\! \lambda \Phi(A_{1} A_{2} \cdots A_{m-1}) + \Phi(A_{1} A_{2} \cdots A_{m-1}C)\nonumber\\
&=&\!\!\!\! \lambda\Phi(A_{1}) \Phi(A_{2}) \cdots \Phi(A_{m-1}) + \Phi(A_{1}) \Phi(A_{2}) \cdots \Phi(A_{m-1})\Phi(C) \nonumber\\
&=&\!\!\!\!  \Phi(A_{1}) \Phi(A_{2}) \cdots \Phi(A_{m-1})(\lambda I + \Phi(C)) \nonumber\\
&=&\!\!\!\!  \Phi(A_{1}) \Phi(A_{2}) \cdots \Phi(A_{m-1})\varphi_{i_m}(A_mB_1) \nonumber\\
&=&\!\!\!\!  \Phi(A_{1}) \Phi(A_{2}) \cdots \Phi(A_{m-1})\varphi_{i_m}(A_m)\varphi_{i_m}(B_1) \nonumber\\
&=&\!\!\!\!  \Phi(A_{1} A_{2} \cdots A_{m}) \Phi(B_{1})\nonumber
\end{eqnarray}
by the properties of $\Phi$ and the fact that $\varphi_{i_m}$ is a $^*$-homomorphism on $\mathfrak{A}'_{i_m}$.  Thus the base case is complete.
\par
Suppose the result holds for some $k \geq 1$.  Write $A_mB_1 = \lambda I_{\mathfrak{A}'_{i_m}} + C$ where $\lambda \in \mathbb{C}$ and $C \in (\mathfrak{A}'_{i_m})^0$.  Thus
\begin{eqnarray} 
\!\!\!\!&&\!\!\!\! \Phi(A_{1} A_{2} \cdots A_{m-1}A_{m}B_{1} B_2 \cdots B_{k+1}  )\nonumber\\
&=&\!\!\!\! \lambda \Phi(A_{1} A_{2} \cdots A_{m-1}B_2 \cdots B_{k+1}  ) + \Phi(A_{1} A_{2} \cdots A_{m-1} C B_2 \cdots B_{k+1}  )\nonumber\\
&=&\!\!\!\! \lambda \Phi(A_{1} A_{2} \cdots A_{m-1})\Phi(B_2 \cdots B_{k+1}  ) + \Phi(A_{1}) \Phi(A_{2}) \cdots \Phi(A_{m-1}) \Phi(C) \Phi(B_2) \cdots \Phi(B_{k+1}) \nonumber\\
&=&\!\!\!\! \lambda \Phi(A_{1}) \Phi(A_{2}) \cdots \Phi(A_{m-1})\Phi(B_2) \cdots \Phi(B_{k+1}) + \Phi(A_{1}) \Phi(A_{2}) \cdots \Phi(A_{m-1}) \Phi(C) \Phi(B_2) \cdots \Phi(B_{k+1}) \nonumber\\
&=&\!\!\!\! \Phi(A_{1}) \Phi(A_{2}) \cdots \Phi(A_{m-1})(\lambda I + \Phi(C))\Phi(B_2) \cdots \Phi(B_{k+1})\nonumber\\
&=&\!\!\!\! \Phi(A_{1}) \Phi(A_{2}) \cdots \Phi(A_{m-1})(\varphi_{i_m}(A_mB_1))\Phi(B_2) \cdots \Phi(B_{k+1})\nonumber\\
&=&\!\!\!\! \Phi(A_{1}) \Phi(A_{2}) \cdots \Phi(A_{m-1})\varphi_{i_m}(A_m)\varphi_{i_m}(B_1)\Phi(B_2) \cdots \Phi(B_{k+1})\nonumber\\
&=&\!\!\!\! \Phi(A_{1} A_{2} \cdots A_{m}) \Phi(B_{1} B_{2} \cdots B_{k+1})\nonumber
\end{eqnarray}
by the properties of $\Phi$ and the fact that $\varphi_{i_m}$ is a $^*$-homomorphism on $\mathfrak{A}'_{i_m}$.  Hence, by the Principle of Mathematical Induction, the proof is complete.
\end{proof}
\begin{cor}
For $i \in \{1,\ldots, n\}$ let $\mathfrak{A}_i$ be a unital C$^*$-algebra, let $\pi_i : \mathfrak{A}_i \to \mathcal{B}(\mathcal{H}_i)$ and let $\pi'_i : \mathfrak{A}_i \to \mathcal{B}(\mathcal{H}'_i)$ be faithful, unital representations, and let $\xi_i \in \mathcal{H}_i$ and $\xi'_i \in \mathcal{H}'_i$ be unit vectors.  Suppose 
\[
\langle \pi_i(A)\xi_i,\xi_i\rangle_{\mathcal{H}_i} = \langle \pi'_i(A)\xi'_i, \xi'_i\rangle_{\mathcal{H}'_i}
\]
for all $A \in \mathfrak{A}_i$ and all $i \in \{1,\ldots, n\}$.  Then there exists a $^*$-isomorphism
\[
\Phi : \ast_{i=1}^n (\mathfrak{A}_i, \pi'_i, \xi'_i) \to \ast_{i=1}^n (\mathfrak{A}_i, \pi_i, \xi_i)
\]
such that $\Phi(A) = A$ for all $A \in \mathfrak{A}'_i$.
\end{cor}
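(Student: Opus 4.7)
The plan is to apply Theorem \ref{fpocpm} twice, once in each direction, using the identity maps $\varphi_i := \mathrm{Id}_{\mathfrak{A}_i}$. Since each $\mathrm{Id}_{\mathfrak{A}_i}$ is a unital $^*$-homomorphism and the vector state hypothesis
\[
\langle \pi_i(\mathrm{Id}_{\mathfrak{A}_i}(A))\xi_i, \xi_i\rangle_{\mathcal{H}_i} = \langle \pi_i(A)\xi_i, \xi_i\rangle_{\mathcal{H}_i} = \langle \pi'_i(A)\xi'_i, \xi'_i\rangle_{\mathcal{H}'_i}
\]
is exactly the compatibility condition required, Theorem \ref{fpocpm} (together with the fact that the resulting map is a $^*$-homomorphism when each $\varphi_i$ is) produces a unital $^*$-homomorphism
\[
\Phi : \ast_{i=1}^n (\mathfrak{A}_i, \pi'_i, \xi'_i) \to \ast_{i=1}^n (\mathfrak{A}_i, \pi_i, \xi_i)
\]
with the property that $\Phi(A) = A$ for every $A \in \mathfrak{A}_i$ and every $i$.

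Next, by swapping the roles of $(\pi_i, \xi_i)$ and $(\pi'_i, \xi'_i)$ and again using $\varphi_i = \mathrm{Id}_{\mathfrak{A}_i}$, the same theorem yields a unital $^*$-homomorphism
\[
\Psi : \ast_{i=1}^n (\mathfrak{A}_i, \pi_i, \xi_i) \to \ast_{i=1}^n (\mathfrak{A}_i, \pi'_i, \xi'_i)
\]
such that $\Psi(A) = A$ for every $A \in \mathfrak{A}_i$ and every $i$.

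To conclude, I would check that $\Psi \circ \Phi$ and $\Phi \circ \Psi$ are the identity. Both compositions fix every element of every $\mathfrak{A}_i$. Since the $^*$-subalgebra generated by $\bigcup_{i=1}^n \mathfrak{A}_i$ is dense in the reduced free product, and since both compositions are continuous $^*$-homomorphisms agreeing with the identity on this dense subalgebra, they must equal the identity everywhere. Hence $\Phi$ is a $^*$-isomorphism with inverse $\Psi$, and $\Phi(A) = A$ for all $A \in \mathfrak{A}_i$ as required.

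There is no genuine obstacle in this argument beyond correctly invoking Theorem \ref{fpocpm} with the identity maps; the hypothesis that the vector states agree is precisely what makes the theorem applicable, and the density of the $^*$-algebra generated by $\bigcup_i \mathfrak{A}_i$ in the reduced free product (used implicitly throughout Section 2) closes the loop. The corollary should therefore read as essentially a one-paragraph consequence of the preceding results.
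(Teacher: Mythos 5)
Your proposal is correct and is exactly the argument the paper intends: the corollary is stated without proof as an immediate consequence of Theorem \ref{fpocpm} and the preceding theorem (that the free product of unital $^*$-homomorphisms is a $^*$-homomorphism), applied with $\varphi_i = \mathrm{Id}_{\mathfrak{A}_i}$ in both directions and combined with density of the $^*$-algebra generated by $\bigcup_i \mathfrak{A}_i$. No gaps.
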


\subsection{Nuclear Embeddings with States}

Before we obtained Theorem \ref{main} and Theorem \ref{ultraproduct}, one natural guess could have been that these theorems would hold only if there existed a state-preserving nuclear embedding into $\mathcal{B}(\mathcal{H})$.  Thus it is natural to ask if such an embedding always exists.  A similar but stricter property has already been studied in free probability.
\begin{defn}[Eckhardt]
\label{2.7.1}
Let $\mathfrak{A}$ be a unital, nuclear C$^*$-algebra and let $\varphi$ a state on $\mathfrak{A}$.  We say that $\varphi$ is CP-approximable if there exists a net $(T_\lambda)_\Lambda$ of finite-rank, unital, completely positive maps on $\mathfrak{A}$ such that $T_\lambda$ converges to the identity pointwise and $\varphi \circ T_\lambda = \varphi$ for all $\lambda \in \Lambda$.  
\end{defn}
Some notatble results relating to this definition are:
\begin{prop}[Proposition 4.5 of \cite{RX}]
\label{2.7.2}
Let $\mathfrak{A}$ be a nuclear, unital C$^*$-algebra and $\varphi$ a faithful state on $\mathfrak{A}^{**}$.  Then there is a net of finite rank, unital, completely positive maps $(T_\lambda : \mathfrak{A} \to \mathfrak{A})_\Lambda$ converging pointwise to the identity in norm such that $\varphi \circ T_\lambda = \varphi$ for all $\lambda \in \Lambda$.
\end{prop}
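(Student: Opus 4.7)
The plan is to combine the completely positive approximation property from nuclearity of $\mathfrak{A}$ with a state-preserving refinement of the semidiscrete approximation property on $\mathfrak{A}^{**}$, and then to perform a small perturbation to correct the state-defect on $\mathfrak{A}$ itself. First, let $\bar\varphi$ denote the unique normal extension of $\varphi$ to $M := \mathfrak{A}^{**}$, which by hypothesis is a faithful normal state. Since $\mathfrak{A}$ is nuclear, $M$ is semidiscrete (equivalently, injective), and we may represent $M$ standardly on $\mathcal{H} := L^2(M,\bar\varphi)$ with cyclic and separating vector $\Omega$ implementing $\bar\varphi = \omega_\Omega$. A classical state-preserving refinement of semidiscreteness then yields a net $(\tilde T_\lambda : M \to M)_\Lambda$ of normal, unital, completely positive, finite rank maps with $\bar\varphi \circ \tilde T_\lambda = \bar\varphi$ converging to $\text{id}_M$ in the point-strong topology; the factorizations $M \to M_{n_\lambda}(\mathbb{C}) \to M$ are arranged to be $\bar\varphi$-compatible by using finite-dimensional $\sigma_t^{\bar\varphi}$-invariant subalgebras (obtained by discretizing the spectrum of the modular operator and applying $\bar\varphi$-preserving conditional expectations), which exist by semidiscreteness together with modularity.

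Next, I would descend from $M$ back to $\mathfrak{A}$. The maps $\tilde T_\lambda|_\mathfrak{A}$ a priori take values in $M$, but their images are finite dimensional. Applying Kaplansky density to a basis of the image of $\tilde T_\lambda$ produces, by a routine approximation, a finite rank unital completely positive map $T'_\lambda : \mathfrak{A} \to \mathfrak{A}$ that is close to $\tilde T_\lambda|_\mathfrak{A}$ in the point-$\|\cdot\|_{\bar\varphi}$ topology on elements of $\mathfrak{A}$. The exact state-preserving property fails by a small amount, so $\omega_\lambda := \varphi - \varphi \circ T'_\lambda$ is a small Hermitian functional on $\mathfrak{A}$. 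Splitting $\omega_\lambda = \omega_\lambda^+ - \omega_\lambda^-$ into its positive and negative parts and using faithfulness of $\varphi$ to locate a positive $e \in \mathfrak{A}$ with $\varphi(e) = 1$, I would construct a rank-one correction of the form $a \mapsto \omega_\lambda^+(a)\cdot e - (\text{rescaled compensation})$ and then form a convex combination with $T'_\lambda$ so that the resulting map $T_\lambda$ is unital, completely positive, finite rank, and satisfies $\varphi \circ T_\lambda = \varphi$ exactly. Interlacing this with the CPAP net of $\mathfrak{A}$ (pointwise norm convergent to the identity, but not state preserving) upgrades the convergence to the pointwise norm topology.

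The hard part will be the descent step: producing $T_\lambda : \mathfrak{A} \to \mathfrak{A}$ that simultaneously satisfies all five requirements --- finite rank, unital, completely positive, $\varphi \circ T_\lambda = \varphi$ exactly, and pointwise norm convergence to $\text{id}_\mathfrak{A}$. The subtlety is that semidiscreteness of $M$ only affords convergence in a Hilbertian or $\sigma$-strong topology, while nuclearity of $\mathfrak{A}$ affords norm convergence without state preservation; the combination requires a quantitative trade-off, namely exhibiting the state-correction as a linear problem on a fixed finite dimensional subspace (solvable with small norm thanks to faithfulness of $\varphi$) while keeping the Hilbertian and norm errors comparable through a careful choice of approximation index. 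Once this quantitative interpolation is executed, both the exact state-preservation and the pointwise norm convergence survive in the limit, which completes the argument.
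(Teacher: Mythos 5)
A preliminary remark: the paper offers no proof of this statement; it is imported wholesale as Proposition 4.5 of \cite{RX}, so there is nothing internal to compare your argument against, and I can only judge it on its own terms. Your overall shape (an approximation property plus a perturbation restoring exact state-preservation) is reasonable, but two load-bearing steps have genuine gaps. First, the claim that the factorizations of $\mathrm{id}_M$ can be made $\bar\varphi$-compatible ``by using finite-dimensional $\sigma^{\bar\varphi}_t$-invariant subalgebras obtained by discretizing the spectrum of the modular operator'' does not work: spectral subspaces of the modular operator are not subalgebras, and for a given faithful normal state on an injective von Neumann algebra there need not exist any weakly dense increasing net of finite-dimensional subalgebras globally invariant under $\sigma^{\bar\varphi}$ (take $M$ the hyperfinite $\mathrm{II}_1$ factor and $\varphi=\tau(h\,\cdot)$ with $h$ generating a masa: every $\sigma^{\varphi}$-invariant finite-dimensional subalgebra then sits inside $\{h\}'\cap M$). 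Takesaki's theorem ties $\bar\varphi$-preserving conditional expectations exactly to such invariance, so this route to state-preserving normal finite-rank approximants is closed as stated. Second, and more seriously, the final upgrade from $\|\cdot\|_{2,\bar\varphi}$-convergence to pointwise norm convergence while keeping $\varphi\circ T_\lambda=\varphi$ exactly is asserted (``interlacing with the CPAP net'') but never given a mechanism; this upgrade is essentially the whole content of the proposition. (It can in fact be done --- on bounded sets $\|\cdot\|_{2,\bar\varphi}$-convergence implies weak convergence in $\mathfrak{A}$ because $\bar\varphi$ is faithful on $\mathfrak{A}^{**}$, and one then passes to convex combinations --- but that is a different argument from the one you describe, and you would still need the corrected maps to exist first.)

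The state-correction step is also not yet a proof. Choosing a positive $e$ with $\varphi(e)=1$ uses nothing (take $e=1$), and a correction of the form $a\mapsto\omega_\lambda^{+}(a)e-(\text{rescaled compensation})$ subtracts a completely positive summand, so the resulting map is not obviously completely positive. The standard fix is to set $T_\lambda=(1-t)T'_\lambda+t\,\rho_\lambda(\cdot)1$ and solve $\varphi\circ T_\lambda=\varphi$ for the functional $\rho_\lambda$; the entire difficulty is that $\rho_\lambda$ must be \emph{positive}, which amounts to $\varphi\circ T'_\lambda\le(1-t)^{-1}\varphi$. It is precisely here that faithfulness of $\varphi$ on $\mathfrak{A}^{**}$ (and not merely on $\mathfrak{A}$) is needed: it guarantees that positive functionals dominated by multiples of $\varphi$ are norm-dense among all positive functionals, so the coefficient functionals of a finite-rank u.c.p.\ map can be perturbed to achieve such domination. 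Your write-up never engages this hypothesis in a substantive way, which is a reliable sign the argument is incomplete: a correct proof must visibly fail when $\varphi$ is faithful on $\mathfrak{A}$ but not on $\mathfrak{A}^{**}$. A leaner and repairable route stays inside $\mathfrak{A}$ throughout: take the nuclearity net $S_\lambda$ (already pointwise norm convergent), note $\varphi\circ S_\lambda\to\varphi$ weak$^*$, pass to convex combinations to get $\|\varphi\circ S_\lambda-\varphi\|\to0$, and only then perform the domination-plus-rank-one correction just described.
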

\begin{prop}[Corollary 4.3 of \cite{Ec}]
\label{2.7.3}
Let $\mathfrak{A}$ be the CAR algebra.  Then there is a state $\psi$ on $\mathfrak{A}$ that is not CP-approximable.
\end{prop}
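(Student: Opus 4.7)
The plan is to construct an explicit product state on the CAR algebra $\mathfrak{A} = \bigotimes_{n=1}^\infty M_2(\mathbb{C})$ whose modular structure is incompatible with CP-approximability. The natural candidates are the Powers product states $\psi_\lambda = \bigotimes_{n=1}^\infty \omega_\lambda$, where $\omega_\lambda$ on $M_2(\mathbb{C})$ is the state with density matrix $\mathrm{diag}(\lambda, 1-\lambda)$ for some fixed $\lambda \in (0, 1/2)$. The GNS representation of $\psi_\lambda$ generates the Powers factor $R_\lambda$, a type III$_\lambda$ factor, and the normal extension $\hat\psi_\lambda$ of $\psi_\lambda$ to $R_\lambda$ is faithful with periodic modular automorphism group $\sigma_t^{\hat\psi_\lambda}$ of period $2\pi/|\log\lambda|$.

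First I would carry out a reduction to the GNS level. Assuming for contradiction that a net $(T_\alpha)_\Lambda$ of finite-rank UCP maps on $\mathfrak{A}$ satisfies $\psi_\lambda \circ T_\alpha = \psi_\lambda$ and $T_\alpha \to \mathrm{id}$ pointwise in norm, state-preservation together with Stinespring dilation and the KMS property promote each $T_\alpha$ to a normal, finite-rank, $\hat\psi_\lambda$-preserving UCP map $\hat T_\alpha$ on $R_\lambda$, converging $\sigma$-strongly to the identity on $\pi_{\psi_\lambda}(\mathfrak{A})$ and hence on all of $R_\lambda$. This step is essentially routine given the canonical correspondence between state-preserving UCP maps and their $L^2$-Markov extensions.

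Next I would apply modular averaging: integrating each $\hat T_\alpha$ against the compact periodic group generated by $\sigma_t^{\hat\psi_\lambda}$ yields modular-equivariant finite-rank UCP maps $\tilde T_\alpha$ whose images lie in the modular centralizer $R_\lambda^{\hat\psi_\lambda}$, which for a Powers state is isomorphic to the hyperfinite II$_1$ factor $R_{0,1}$, with $\hat\psi_\lambda$ restricting to its unique normalized trace $\tau$. Combined with the discrete decomposition $R_\lambda \cong R_{0,1} \rtimes_\theta \mathbb{Z}$, one would then argue that the finite rank of the $\tilde T_\alpha$, transferred through the crossed product structure, forces the canonical semifinite weight on $R_{0,1}$ dual to $\hat\psi_\lambda$ to be approximated by finite-rank trace-class functionals in a way that is impossible since the weight takes infinite value on the identity while being genuinely semifinite.

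The main obstacle is making precise the interaction between the finite-rank condition on $\mathfrak{A}$ and the type III$_\lambda$ structure of $R_\lambda$: one must verify that modular averaging genuinely preserves finite rank (straightforward, since the averaging group is compact with bounded period) and that the image of the $\tilde T_\alpha$ in the centralizer is rich enough to force a contradiction through Takesaki duality. If this direct modular route proves too delicate, a cleaner fallback would be an abstract rigidity argument: show that CP-approximability of $\psi_\lambda$ forces $\hat\psi_\lambda$ to be the $\sigma$-weak limit of normal states supported on finite-dimensional projections of $R_\lambda$, contradicting the fact that the type III$_\lambda$ factor $R_\lambda$ has no nonzero finite projections.
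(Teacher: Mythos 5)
The paper does not actually prove this proposition --- it is quoted verbatim from Corollary 4.3 of \cite{Ec} --- so your attempt can only be judged on its own terms, and unfortunately it fails at the very first step: the Powers product state $\psi_\lambda=\bigotimes_{n\geq 1}\omega_\lambda$ \emph{is} CP-approximable. Writing the CAR algebra as $\bigl(\bigotimes_{k\leq n}\mathcal{M}_2(\mathbb{C})\bigr)\otimes\bigl(\bigotimes_{k>n}\mathcal{M}_2(\mathbb{C})\bigr)$ and letting $T_n$ be the slice map $\mathrm{id}\otimes\bigl(\bigotimes_{k>n}\omega_\lambda\bigr)$, each $T_n$ is a unital, completely positive map of rank at most $4^n$ whose range is $\bigotimes_{k\leq n}\mathcal{M}_2(\mathbb{C})\otimes I$; it satisfies $\psi_\lambda\circ T_n=\psi_\lambda$ precisely because $\psi_\lambda$ is a product state, and $T_n\to\mathrm{id}$ pointwise in norm because $T_n$ fixes the dense union of the finite tensor factors and $\left\|T_n\right\|=1$. (Equivalently: the finite matrix subalgebras are globally invariant under the modular group of $\psi_\lambda$, so Takesaki's theorem provides $\psi_\lambda$-preserving conditional expectations onto them.) Hence no contradiction can be derived from assuming $\psi_\lambda$ is CP-approximable, and the type III$_\lambda$ structure of the GNS von Neumann algebra is no obstruction at all. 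Any genuine counterexample must be a state that is not a product state with respect to such a filtration, and Eckhardt's actual construction is of a different nature entirely (it goes through reduced free products, as the title of \cite{Ec} indicates).

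Two later steps would be unsound even with a better choice of state. First, averaging a finite-rank map over a compact group action, $\tilde T=\frac{1}{p}\int_0^p\sigma_{-t}\circ\hat T\circ\sigma_t\,dt$, does not preserve finite rank: the integral is a limit of convex combinations of rank-$n$ maps and can perfectly well have infinite-dimensional range (already averaging a rank-one map over a group representation with infinitely many inequivalent isotypic components shows this), so the parenthetical claim that this is ``straightforward'' is false. Second, both proposed contradictions conflate finite-dimensional \emph{subspaces} of $R_\lambda$ (the ranges of finite-rank maps), which exist in abundance in every von Neumann algebra, with finite \emph{projections} in $R_\lambda$, which indeed do not exist in a type III factor; the absence of finite projections says nothing about the existence of finite-rank unital completely positive maps converging to the identity, and the step about a semifinite weight being ``approximated by finite-rank trace-class functionals'' is not yet an argument. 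By contrast, the first reduction you describe --- extending a state-preserving UCP map on $\mathfrak{A}$ to a normal state-preserving UCP map on $\pi_{\psi}(\mathfrak{A})''$ --- is correct (it follows from a Radon--Nikodym argument, no KMS input needed), but it is the only step that survives.
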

However by relaxing the conditions of CP-approximability, we obtain the following result.
\begin{thm}
\label{2.7.11}
Let $\mathfrak{A}$ be a unital C$^*$-algebra.  Then the following are equivalent:
\begin{enumerate}
	\item $\mathfrak{A}$ is exact.
		\item If $\varphi$ is a state on $\mathfrak{A}$ then for every Hilbert space $\mathcal{K}$, every faithful, unital representation $\sigma : \mathfrak{A} \to \mathcal{B}(\mathcal{K})$, and every unit vector $\xi \in \mathcal{K}$ such that $\varphi(A) = \langle \sigma(A) \xi, \xi\rangle_\mathcal{K}$ for all $A \in \mathfrak{A}$ there exists a net of matrix algebras $(\mathcal{M}_{n_\lambda}(\mathbb{C}))_\Lambda$, nets of unital, completely positive maps $(\phi_\lambda : \mathfrak{A} \to \mathcal{M}_{n_\lambda}(\mathbb{C}))_\Lambda$ and $(\psi_\lambda : \mathcal{M}_{n_\lambda}(\mathbb{C}) \to \mathcal{B}(\mathcal{K}))_\Lambda$, and unit vectors $(\xi_\lambda \in \mathbb{C}^{n_\lambda})_\Lambda$ such that  $\langle\phi_\lambda(A) \xi_\lambda, \xi_\lambda \rangle_{\mathbb{C}^{n_\lambda}} = \varphi(A)$ for all $A \in \mathfrak{A}$, $\langle \psi_\lambda(T)\xi, \xi\rangle_\mathcal{K} = \langle T \xi_\lambda, \xi_\lambda \rangle_{\mathbb{C}^{n_\lambda}}$ for all $T \in \mathcal{M}_{n_\lambda}(\mathbb{C})$, and $\lim_\Lambda \left\|\sigma(A) - \psi_\lambda(\phi_\lambda(A))\right\| = 0$ for all $A \in \mathfrak{A}$.
\end{enumerate}
\end{thm}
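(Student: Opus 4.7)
The plan is as follows. The direction $(2)\Rightarrow(1)$ is immediate: given a faithful unital representation $\sigma:\mathfrak{A}\to\mathcal{B}(\mathcal{K})$, pick any unit vector $\xi\in\mathcal{K}$ and let $\varphi(A)=\langle\sigma(A)\xi,\xi\rangle_\mathcal{K}$. Condition $(2)$ produces nets $(\phi_\lambda)_\Lambda$ and $(\psi_\lambda)_\Lambda$ of UCP maps factoring $\sigma$ through matrix algebras in point-norm, so discarding the state-preservation clauses gives precisely the first equivalence of Theorem \ref{exactnessTheorem}, whence $\mathfrak{A}$ is exact.

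For $(1)\Rightarrow(2)$, fix $\varphi$, $\sigma$, and $\xi$ as in the statement. By the second equivalence of Theorem \ref{exactnessTheorem}, choose nets $(\tilde\phi_\lambda:\mathfrak{A}\to\mathcal{M}_{n_\lambda}(\mathbb{C}))_\Lambda$ and $(\tilde\psi_\lambda:\mathcal{M}_{n_\lambda}(\mathbb{C})\to\mathcal{B}(\mathcal{K}))_\Lambda$ of UCP maps with $\|\sigma(A)-\tilde\psi_\lambda(\tilde\phi_\lambda(A))\|\to 0$ for all $A$. These satisfy the approximation requirement but respect no state structure, so they must be dilated. To force condition (a), I would enlarge the target and set $\phi_\lambda(A):=\tilde\phi_\lambda(A)\oplus\varphi(A)\in\mathcal{M}_{n_\lambda}(\mathbb{C})\oplus\mathbb{C}\subseteq\mathcal{M}_{n_\lambda+1}(\mathbb{C})$ with $\xi_\lambda:=e_{n_\lambda+1}$; then $\phi_\lambda$ is UCP and $\langle\phi_\lambda(A)\xi_\lambda,\xi_\lambda\rangle=\varphi(A)$ identically. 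The remaining task is to produce $\psi_\lambda:\mathcal{M}_{n_\lambda+1}(\mathbb{C})\to\mathcal{B}(\mathcal{K})$ that is UCP, satisfies condition (b), and for which $\psi_\lambda\circ\phi_\lambda\to\sigma$ in norm. This is where Theorem \ref{ecpmws} enters: one builds a UCP map on a suitable operator subsystem of $\mathcal{M}_{n_\lambda+1}(\mathbb{C})$ satisfying the vector-state intertwining relation, and then invokes Theorem \ref{ecpmws} to extend it to all of $\mathcal{M}_{n_\lambda+1}(\mathbb{C})$ while preserving that relation.

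The main obstacle is the tension between exact state preservation and norm approximation of $\sigma$. If one defines $\psi_\lambda$ in the naive block-diagonal way $\psi_\lambda(T\oplus c)=(I-\xi\xi^*)\tilde\psi_\lambda(T)(I-\xi\xi^*)+c\,\xi\xi^*$, conditions (a) and (b) are clean but the composition $\psi_\lambda(\phi_\lambda(A))=(I-\xi\xi^*)\tilde\psi_\lambda(\tilde\phi_\lambda(A))(I-\xi\xi^*)+\varphi(A)\xi\xi^*$ kills the off-diagonal blocks $(I-\xi\xi^*)\sigma(A)\xi\xi^*$ and $\xi\xi^*\sigma(A)(I-\xi\xi^*)$, which do not generally vanish. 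The natural remedy — subtracting $\omega_\lambda(T)\xi\xi^*$ where $\omega_\lambda(T)=\langle\tilde\psi_\lambda(T)\xi,\xi\rangle$ — fails complete positivity. The fix I propose is to define $\psi_\lambda$ on a larger operator subsystem of $\mathcal{M}_{n_\lambda+1}(\mathbb{C})$ that also contains carefully chosen off-diagonal generators: using the Stinespring dilation $\tilde\psi_\lambda(T)=V_\lambda^*(T\otimes I_{\mathcal{N}_\lambda})V_\lambda$ to encode the cross-blocks, arrange $V_\lambda\xi$ to be a product vector of the form $e_{n_\lambda+1}\otimes\eta_\lambda$ (possibly after replacing $n_\lambda+1$ by a sufficient enlargement $n_\lambda+k_\lambda$), which forces the vector-state condition (b) automatically and retains enough structure for the composition to track $\sigma(A)$. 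One then checks that on the original subsystem the map so defined is UCP and state-preserving, so Theorem \ref{ecpmws} extends it to all of $\mathcal{M}_{n_\lambda+k_\lambda}(\mathbb{C})$ with (b) intact.

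Once both (a) and (b) hold exactly and the construction has been set up so that $\psi_\lambda(\phi_\lambda(A))$ differs from $\sigma(A)$ by a finite sum whose summands are each controlled by $\|\tilde\psi_\lambda(\tilde\phi_\lambda(A))-\sigma(A)\|$ and by $|\omega_\lambda(\tilde\phi_\lambda(A))-\varphi(A)|$ — both of which tend to zero by the original exactness approximation — the norm approximation (c) follows. This completes the reduction from $(1)$ to $(2)$; the harder technical core, as indicated, is the careful Stinespring-based choice of the enlarged subsystem on which $\psi_\lambda$ is initially defined so that the off-diagonal terms of $\sigma$ are correctly absorbed.
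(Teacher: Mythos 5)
Your $(2)\Rightarrow(1)$ direction is fine. The problem is in $(1)\Rightarrow(2)$: you correctly identify the obstruction (the naive block-diagonal $\psi_\lambda$ destroys the off-diagonal blocks $(I-\xi\xi^*)\sigma(A)\xi\xi^*$), but the proposed repair does not work as described. You want to define a state-preserving UCP map on an ``operator subsystem'' of $\mathcal{M}_{n_\lambda+k_\lambda}(\mathbb{C})$ containing off-diagonal generators and then invoke Theorem \ref{ecpmws} to extend it. Theorem \ref{ecpmws} only applies when the initially given map is defined on a C$^*$-\emph{subalgebra}; whether it holds for operator systems is exactly Open Question 1 of this paper, where it is noted that the proof of Theorem \ref{ecpmws} cannot be modified to cover that case. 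So the key extension step of your argument rests on an unproved (and here explicitly open) statement. Independently of that, the assertion that arranging $V_\lambda\xi$ to be a product vector ``forces condition (b) automatically and retains enough structure for the composition to track $\sigma(A)$'' is not substantiated: condition (b) pins $\psi_\lambda$ down on the corner $\xi\xi^*\mathcal{B}(\mathcal{K})\xi\xi^*$, and you never exhibit how the cross-terms of $\sigma(A)$ are actually reproduced by $\psi_\lambda\circ\phi_\lambda$ under that constraint.

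The paper avoids this tension entirely by making the state \emph{pure} before factoring. One first reduces to $\mathfrak{A}$ nuclear (every unital exact C$^*$-algebra embeds unitally in $\mathcal{O}_2$), passes to the universal representation $\pi:\mathfrak{A}\to\mathcal{B}(\mathcal{H}_u)$ where $\varphi$ is a vector state at some $\xi$, and replaces $\pi(\mathfrak{A})$ by $\mathfrak{B}=\pi(\mathfrak{A})+\mathfrak{K}$, which is still nuclear (quotient by the compacts plus the Second Isomorphism Theorem). Since $\mathfrak{B}$ contains the compacts, the vector state at $\xi$ restricted to $\overline{\mathfrak{B}\xi}$ has irreducible GNS representation, i.e.\ is pure, and for pure states on nuclear C$^*$-algebras the Kishimoto--Sakai lemma (Lemma \ref{2.7.9} in the paper, via Lemma 4.8.6 of \cite{BO}) produces UCP factorizations through matrix algebras that intertwine the vector states exactly on both sides. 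That is the mechanism your construction is missing: purity of the state is what makes both intertwining conditions come out exactly while the composition still converges to the identity in point-norm. If you want to salvage your approach you would need either a genuine solution to the operator-system extension problem or a direct construction of the cross-block data, neither of which is supplied.
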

The condition that $\mathfrak{A}$ is unital is trivial to remove in the above theorem (with the replacement of unital, completely positive maps with contractive, completely positive maps).  Moreover it can be easily shown using Theorem \ref{ecpmws} that it suffices to prove the above for one fixed representation $\sigma : \mathfrak{A} \to \mathcal{B}(\mathcal{K})$ and unit vector $\xi \in \mathcal{K}$ such that $\varphi(A) = \langle \sigma(A) \xi, \xi\rangle_\mathcal{K}$ for all $A \in \mathfrak{A}$.  Similarly it can be show that if the matrix algebras with vector states are replaced with finite dimensional C$^*$-algebras with arbitrary states in the above definition, then the two statements are equivalent. Although the above result is essentially implied in Lemma 2.4 of \cite{Oz}, we include a proof for completeness.  We begin with a simple lemma.
\begin{lem}
\label{2.7.9}
Let $\mathfrak{A}$ be a unital, nuclear C$^*$-algebra and let $\varphi$ be a pure state on $\mathfrak{A}$.  Then there exists nets of unital, completely positive maps $(\phi_\lambda :\mathfrak{A} \to \mathcal{M}_{n_\lambda}(\mathbb{C}))_\lambda$ and $(\psi_\lambda : \mathcal{M}_{n_\lambda}(\mathbb{C}) \to \mathfrak{A})_\Lambda$ and unit vectors $(\xi_\lambda \in \mathbb{C}^{n_\lambda})_\Lambda$ such that the net $(\psi_\lambda \circ \phi_\lambda)_\Lambda$ converges to the identity on $\mathfrak{A}$ in the point-norm topology, $\langle \phi_\lambda(A) \xi_\lambda, \xi_\lambda\rangle_{\mathbb{C}^{n_\lambda}} = \varphi(A)$ for all $A \in \mathfrak{A}$ and all $\lambda \in \Lambda$, and $\varphi(\psi_n(T)) = \langle T\xi_\lambda, \xi_\lambda\rangle_{\mathbb{C}^{n_\lambda}}$ for all $T \in \mathcal{M}_{n_\lambda}(\mathbb{C})$ and all $\lambda \in \Lambda$.  Thus $\varphi$ is CP-approximable.
\end{lem}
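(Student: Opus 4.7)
The plan is to start with a generic CPAP coming from nuclearity and upgrade it so that $\varphi$ is implemented exactly by vector states on the matrix algebras. By nuclearity of $\mathfrak{A}$, fix unital, completely positive nets $\phi'_\lambda : \mathfrak{A} \to \mathcal{M}_{m_\lambda}(\mathbb{C})$ and $\psi'_\lambda : \mathcal{M}_{m_\lambda}(\mathbb{C}) \to \mathfrak{A}$ with $\psi'_\lambda \circ \phi'_\lambda \to Id_\mathfrak{A}$ in the point-norm topology. Purity of $\varphi$ gives an irreducible GNS representation $\pi : \mathfrak{A} \to \mathcal{B}(\mathcal{H})$ with cyclic unit vector $\xi$ satisfying $\varphi(A) = \langle \pi(A)\xi, \xi\rangle$ for all $A \in \mathfrak{A}$.

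To achieve exact state-preservation on the domain side, enlarge the matrix algebras by one dimension: set $n_\lambda := m_\lambda + 1$, $\xi_\lambda := e_1 \in \mathbb{C}^{n_\lambda}$, and define
\[
\phi_\lambda(A) := \varphi(A) \oplus \phi'_\lambda(A) \in \mathbb{C} \oplus \mathcal{M}_{m_\lambda}(\mathbb{C}) \subseteq \mathcal{M}_{n_\lambda}(\mathbb{C}).
\]
As a direct sum of unital, completely positive maps, $\phi_\lambda$ is unital and completely positive, and by construction $\langle \phi_\lambda(A)\xi_\lambda, \xi_\lambda\rangle = \varphi(A)$ for every $A \in \mathfrak{A}$.

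The bulk of the work is to build a matching $\psi_\lambda : \mathcal{M}_{n_\lambda}(\mathbb{C}) \to \mathfrak{A}$ that is unital and completely positive, satisfies $\varphi(\psi_\lambda(T)) = \langle T e_1, e_1\rangle$ for every $T$, and has $\psi_\lambda \circ \phi_\lambda \to Id_\mathfrak{A}$ in point-norm. Irreducibility of $\pi$ together with Kadison's transitivity theorem supplies, for each $\lambda$, a positive contraction $B_\lambda \in \mathfrak{A}$ with $\pi(B_\lambda)$ equal to the rank-one projection onto $\xi$; in particular $\varphi(B_\lambda) = 1$ and $\varphi(B_\lambda A B_\lambda) = \varphi(A)$ for all $A \in \mathfrak{A}$. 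The naive candidate
\[
\psi_\lambda(t \oplus R) := \psi'_\lambda(R) + \bigl(t - \varphi(\psi'_\lambda(R))\bigr) B_\lambda
\]
delivers the state-preservation identity immediately, and $\psi_\lambda \circ \phi_\lambda(A) \to A$ then follows from $\varphi(\psi'_\lambda(\phi'_\lambda(A))) \to \varphi(A)$.

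The main obstacle is that this correction term need not remain completely positive. My plan to get around this is to amplify once more---enlarging $n_\lambda$ so that the auxiliary state $\omega_\lambda := \varphi \circ \psi'_\lambda$ on $\mathcal{M}_{m_\lambda}(\mathbb{C})$ can be realized as a bona fide vector state on a sufficiently large matrix block via its purification---and to rebuild $\psi_\lambda$ as an honest completely positive combination of $\psi'_\lambda$ with a conjugation channel built from $B_\lambda$ that implements the required state swap within the CP framework. Once such a triple $(\phi_\lambda, \psi_\lambda, \xi_\lambda)$ is in hand, the maps $T_\lambda := \psi_\lambda \circ \phi_\lambda$ are finite-rank, unital and completely positive on $\mathfrak{A}$, satisfy $\varphi \circ T_\lambda = \varphi$ (by combining the two vector-state identities), and converge to $Id_\mathfrak{A}$ in point-norm, establishing that $\varphi$ is CP-approximable.
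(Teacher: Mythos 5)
Your proposal has a genuine gap at its core. The corrective device you rely on does not exist in general: Kadison's transitivity theorem gives you, for the irreducible GNS representation $\pi$, a positive norm-one element $B_\lambda \in \mathfrak{A}$ with $\pi(B_\lambda)\xi = \xi$, but it does \emph{not} give you one with $\pi(B_\lambda)$ equal to the rank-one projection onto $\xi$ --- that is an infinite set of conditions ($\pi(B_\lambda)\eta = 0$ for all $\eta \perp \xi$), and such an element would be a minimal projection in $\pi(\mathfrak{A})$, which fails already for the CAR algebra. (The weaker consequence $\varphi(B_\lambda A B_\lambda) = \varphi(A)$ does survive, via Cauchy--Schwarz from $\varphi(B_\lambda)=1$ and $0 \le B_\lambda \le I$, but that is not enough for what you need next.) More importantly, you correctly observe that your candidate $\psi_\lambda(t \oplus R) = \psi'_\lambda(R) + (t - \varphi(\psi'_\lambda(R)))B_\lambda$ fails to be completely positive, and the proposed repair --- ``amplify once more'' and ``rebuild $\psi_\lambda$ as an honest completely positive combination \ldots with a conjugation channel'' --- is never carried out. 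But producing a \emph{unital completely positive} $\psi_\lambda$ on all of $\mathcal{M}_{n_\lambda}(\mathbb{C})$ that simultaneously satisfies $\varphi \circ \psi_\lambda = \langle \cdot\, \xi_\lambda, \xi_\lambda\rangle$ and $\psi_\lambda \circ \phi_\lambda \to Id_\mathfrak{A}$ in point-norm is precisely the content of the lemma; your sketch establishes the state identity only for a non-CP map and leaves the hard part unproven.

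For comparison, the paper does not attempt to patch a generic completely positive approximation by hand. It invokes Lemma 3.4 of Kishimoto--Sakai (equivalently Lemma 4.8.6 of Brown--Ozawa), which, for a nuclear C$^*$-algebra with an irreducible representation, produces the nets $(\phi_\lambda)$, $(\psi_\lambda)$ \emph{together with} isometries $V_\lambda : \mathbb{C}\xi \to \mathbb{C}^{n_\lambda}$ satisfying $V_\lambda^*\phi_\lambda(A)V_\lambda = P_{\mathbb{C}\xi}\pi(A)P_{\mathbb{C}\xi}$ and $V_\lambda \pi(\psi_\lambda(T))V_\lambda^* = V_\lambda V_\lambda^* T V_\lambda V_\lambda^*$; setting $\xi_\lambda = V_\lambda\xi$, both vector-state identities then follow by a two-line computation. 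If you want a self-contained argument, you would need to reprove that compatibility lemma (which uses irreducibility in an essential, quantitative way), not merely append a scalar correction to an arbitrary factorization.
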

\begin{proof}
Let $(\pi, \mathcal{H}, \xi)$ be the GNS representation of $\mathfrak{A}$ given by $\varphi$ and let $\mathcal{K} = \mathbb{C} \xi$.  Since $\varphi$ is a pure state, $\pi$ is an irreducible representation.  By Lemma 3.4 of \cite{KS} (or see Lemma 4.8.6 in \cite{BO}) there exists nets of unital, completely positive maps $(\phi_\lambda :\mathfrak{A} \to \mathcal{M}_{n_\lambda}(\mathbb{C}))_\Lambda$ and $(\psi_\lambda : \mathcal{M}_{n_\lambda}(\mathbb{C}) \to \mathfrak{A})_\Lambda$ and isometries $(V_\lambda : \mathcal{K} \to \mathbb{C}^{n_\lambda})_\Lambda$ such that the net $(\psi_\lambda \circ \phi_\lambda)_\Lambda$ converges to the identity on $\mathfrak{A}$ in the point-norm topology, $V_\lambda^*\phi_\lambda(A)V_\lambda = P_\mathcal{K} \pi(A) P_\mathcal{K}$ for all $A \in \mathfrak{A}$ and all $\lambda \in \Lambda$, and $V_\lambda \pi(\psi_\lambda(T))V_\lambda^* = V_\lambda V_\lambda^* TV_\lambda V_\lambda^*$ for all $T \in \mathcal{M}_{n_\lambda}(\mathbb{C})$ and all $\lambda \in \Lambda$.  For each $\lambda \in \Lambda$ let $\xi_\lambda = V_\lambda \xi \in \mathbb{C}^{n_\lambda}$.  Then each $\xi_\lambda$ is a unit vector and $V_\lambda V_\lambda^*\xi_\lambda = \xi_\lambda$.  Moreover
\[
\langle \phi_\lambda(A) \xi_\lambda, \xi_\lambda\rangle_{\mathbb{C}^{n_\lambda}} = \langle V_\lambda^*\phi_\lambda(A)V_\lambda \xi, \xi\rangle_{\mathcal{H}} = \langle P_\mathcal{K} \pi(A) P_\mathcal{K} \xi, \xi\rangle_{\mathcal{H}} = \langle \pi(A)\xi, \xi\rangle_{\mathcal{H}} = \varphi(A)
\]
for all $A \in \mathfrak{A}$ and 
\[
\varphi(\psi_\lambda(T)) = \langle \pi(\psi_\lambda(T)) \xi,\xi\rangle_{\mathcal{K}} = \langle V_\lambda \pi(\psi_\lambda(T))V_\lambda^*\xi_\lambda,\xi_\lambda\rangle_{\mathbb{C}^{n_\lambda}}= \langle V_\lambda V_\lambda^* TV_\lambda V_\lambda^* \xi_\lambda,\xi_\lambda\rangle_{\mathbb{C}^{n_\lambda}}= \langle  T \xi_\lambda,\xi_\lambda\rangle_{\mathbb{C}^{n_\lambda}}
\]
for all $T \in \mathcal{M}_{n_\lambda}(\mathbb{C})$.  
\end{proof}
Note that Lemma \ref{2.7.9} is related to Proposition \ref{2.7.2}.  Moreover Proposition \ref{2.7.3} says we cannot drop the assumption of $\varphi$ being a pure state. 
\begin{proof}[Proof of Theorem \ref{2.7.11}]
As every unital, exact C$^*$-algebra embeds into a unital, nuclear C$^*$-algebra (specifically the Cuntz algebra $\mathcal{O}_2$ by Theorem 2.8 of \cite{KP} in the separable case), it is trivial to verify that $\mathfrak{A}$ may be assumed to be nuclear.
\par
Let $\pi : \mathfrak{A} \to \mathcal{B}(\mathcal{H}_u)$ be the universal representation of $\mathfrak{A}$.  Therefore there exists a unit vector $\xi \in \mathcal{H}_u$ such that $\varphi(A) = \langle \pi(A)\xi, \xi\rangle$ for all $A \in \mathfrak{A}$.  Let $\mathfrak{K}$ be the compact operators in $\mathcal{B}(\mathcal{H}_u)$ and let $\mathfrak{B} = \pi(\mathfrak{A}) + \mathfrak{K}$ which is a C$^*$-algebra containing $\mathfrak{A}$ and $\mathfrak{K}$.
\par
We claim that $\mathfrak{B}$ is nuclear.  To see this notice
\[
0 \rightarrow \mathfrak{K} \rightarrow \mathfrak{B} \rightarrow \mathfrak{B}/\mathfrak{K} \rightarrow0
\]
is an exact sequence of C$^*$-algebra.  Since $\mathfrak{K}$ is nuclear, $\mathfrak{B}$ will be nuclear provided that $\mathfrak{B}/\mathfrak{K}$ is nuclear (see Proposition 10.1.3 of \cite{BO}).  However, by the Second Isomorphism Theorem of C$^*$-Algebras,
\[
\mathfrak{B}/\mathfrak{K} = (\pi(\mathfrak{A}) + \mathfrak{K})/\mathfrak{K} \simeq \pi(\mathfrak{A})/(\pi(\mathfrak{A}) \cap \mathfrak{K}).
\]
Since $\mathfrak{A}$ is nuclear and $\pi(\mathfrak{A}) \cap \mathfrak{K}$ is an ideal in $\pi(\mathfrak{A})$, $\pi(\mathfrak{A})/(\pi(\mathfrak{A}) \cap \mathfrak{K})$ is nuclear (see Corollary 9.4.4 of \cite{BO}).  Hence $\mathfrak{B}$ is nuclear.
\par
To show that $\varphi$ is weakly CP-approximable it suffices to show that the vector state $\psi : \mathfrak{B} \to\mathbb{C}$ defined by $\psi(T) = \langle T\xi,\xi\rangle$ for all $T \in \mathfrak{B}$ is weakly CP-approximable.   Let $\mathcal{K} := \overline{\mathfrak{B}\xi}$.  Then $\mathcal{K}$ is an invariant subspace of $\mathfrak{B}$ with cyclic vector $\xi$.  By the uniqueness of the GNS construction, the restriction of $\mathfrak{B}$ to $\mathcal{K}$ is the GNS representation of $\psi$.  However, since $\mathfrak{B}$ contains the compact operators, $\mathcal{K}$ has no non-trivial $\mathfrak{B}$-invariant subspaces.  Hence the GNS representation of $\psi$ is irreducible and thus $\psi$ is a pure state on $\mathfrak{B}$ (see Theorem I.9.8 of \cite{Da}).  Thus $\psi$ is weakly CP-approximable by Lemma \ref{2.7.9} and hence $\varphi$ is weakly CP-approximable.  
\end{proof}

\section{Open Questions}
\label{sect:Open Questions}

In this section we will brief discuss three questions pertaining to the material presented in this paper along with their difficulties.  Our first question is whether or not Theorem \ref{ecpmws} can be extended to operator systems.
\newline
\newline
{\bf Question 1)}  \textit{Let $\mathfrak{A}$ be a unital C$^*$-algebra, let $\mathcal{S}$ be an operator system, let $\varphi : \mathfrak{A} \to \mathbb{C}$ be a state, let $\phi :\mathcal{S} \to \mathcal{B}(\mathcal{H})$ be a unital, completely positive map, and let $\xi \in \mathcal{H}$ be a unit vector such that $\varphi(A) = \langle \phi(A)\xi,\xi\rangle_{\mathcal{H}}$ for all $A \in \mathcal{S}$.  Does there exists a unital, completely positive map $\psi : \mathfrak{A} \to \mathcal{B}(\mathcal{H})$ extending $\phi$ such that $\varphi(A) = \langle \psi(A)\xi,\xi\rangle_{\mathcal{H}}$ for all $A \in \mathfrak{A}$?}
\newline
\par
Clearly the proof of Theorem \ref{ecpmws} cannot be modified to solve the above question.
\newline
\newline
{\bf Question 2)}  \textit{Clearly Theorem \ref{main} and Theorem \ref{ultraproduct} are equivalent statements.  Can this be seen directly as in the tensor product case?}
\newline
\par
In order to use Theorem \ref{main} to prove Theorem \ref{ultraproduct}, it would suffice to prove Corollary \ref{freeImbed} directly.  However, due to the differences in the structures of the norms of the objects in Corollary \ref{freeImbed}, it appears difficult to directly prove such a map exists.  
\newline
\newline
{\bf Question 3)}  \textit{The concepts of Theorem \ref{main} and Theorem \ref{ultraproduct} can be generalized to free products with amalgamation.  Do these theorems still hold in this more general setting?}
\newline
\par
The only issue in the proof of Theorem \ref{main} given above when applied to reduced free products with amalgamation appears to be in the inductive step of Lemma \ref{complemma} where the Gram-Schmidt Orthogonalization process was used to approximate the norm of an operator by the norm of a matrix of operators.

\section{Acknowledgments and References}
\label{sect:AcknowledgementsReferences}

I would like to thank Professor Dimitri Shlyakhtenko for informing me of this problem and for his various ideas and advice pertaining to this problem.

\textsc{Department of Mathematics, UCLA, Los Angeles, California, 90095-1555, USA}
\par
\textit{E-mail address}: pskoufra@math.ucla.edu

\end{document}